
\documentclass[a4paper,12pt,onecolumn]{article}
\usepackage{etex}

\usepackage[vmargin=2cm,hmargin=2cm,headheight=14.5pt,top=2cm,headsep=.5cm]{geometry}

\usepackage{bm}
\usepackage{empheq}
\usepackage{stackrel}
\usepackage{cases}
\usepackage{mathtools}
\usepackage{amsthm,amsmath,amscd}
\usepackage{makeidx}
\usepackage[all]{xy}
\usepackage{perpage}
\usepackage{tikz-cd}
\tikzcdset{every label/.append style = {font = \small}}
\usepackage[symbol]{footmisc}

\MakePerPage[2]{footnote}
\usepackage{graphicx}
\DeclareMathSizes{12}{12}{8}{6}

\usepackage{cite}
\usepackage{url}
\usepackage[charter]{mathdesign}
\usepackage{accents}
\usepackage{hyperref}


\newtheoremstyle{ptheorem}{1em}{0em}{\itshape}{}{\bfseries}{.}{.5em}{\thmname{#1}\thmnumber{ #2}\thmnote{ (\hspace{-.01pt}{#3})}}

\theoremstyle{ptheorem}

\newtheorem{thm}{Theorem}[section]
\newtheorem{pro}[thm]{Proposition}
\newtheorem{lem}[thm]{Lemma}
\newtheorem{cor}[thm]{Corollary}

\newtheoremstyle{hdef}{1em}{0em}{}{}{\bfseries}{.}{.5em}{\thmname{#1}\thmnumber{ #2}\thmnote{ (\hspace{-.01pt}{#3})}}
\theoremstyle{hdef}

\newtheorem{dfn}[thm]{Definition}
\newtheorem{rem}[thm]{Remark}

\makeatletter
\newtheoremstyle{premark}{1em}{0em}{
}{}{\scshape}{.}{.5em}{}
\makeatother

\theoremstyle{premark}

\newtheorem{exa}[thm]{Example}

\numberwithin{equation}{section}
\numberwithin{figure}{section}



\DeclareMathOperator{\im}{im}

\let\sp\relax
\DeclareMathOperator{\sp}{sp}
\DeclareMathOperator{\Id}{Id}

\DeclareMathOperator{\dif}{d}


\newcommand{\cC}{{\mathcal C}}

\newcommand{\cF}{{\mathcal F}}

\newcommand{\cL}{{\mathcal L}}
\newcommand{\cM}{{\mathcal M}}

\newcommand{\cX}{{\mathcal X}}

\newcommand{\bC}{{\mathbb C}}

\newcommand{\bN}{{\mathbb N}}

\newcommand{\bR}{{\mathbb R}}

\newcommand{\bZ}{{\mathbb Z}}

\renewcommand{\a}{\alpha}
\renewcommand{\b}{\beta}
\renewcommand{\c}{\gamma}
\renewcommand{\l}{\lambda}

\renewcommand{\phi}{\varphi}


\newcommand{\<}{\langle}
\renewcommand{\>}{\rangle}

\newcommand{\ol}{\overline}

\renewcommand{\d}{\delta}

\renewcommand{\<}{\left<}
\renewcommand{\>}{\right>}

\renewcommand{\(}{\left(}
\renewcommand{\)}{\right)}

\newcommand{\til}{\tilde}

\newcommand{\olb}[1]{%
  \vbox{\offinterlineskip\ialign{\hfil##\hfil\cr $\rotatebox[origin=c]{90}{$]$}$\cr\noalign{\kern-.45ex}{$#1$}\cr}}}
%
\parskip=.5em
\def\arraystretch{1.2}

\allowdisplaybreaks
\begin{document}
\title{On Linear Differential Equations and Systems\\ with Reflection\footnote{Partially supported by  Ministerio de Econom\'ia y Competitividad (Spain) project MTM2013-43014-P and Xunta de Galicia (Spain), project EM2014/032.}}

\author{
Alberto Cabada and F. Adri\'an F. Tojo\footnote{Supported by  FPU scholarship, Ministerio de Educaci\'on, Cultura y Deporte (Spain).} \\
\normalsize
Departamento de An\'alise Ma\-te\-m\'a\-ti\-ca, Facultade de Matem\'aticas,\\ 
\normalsize Universidade de Santiago de Com\-pos\-te\-la, Spain.\\ 
\normalsize e-mail: \textit{alberto.cabada@usc.es, fernandoadrian.fernandez@usc.es}\\
}
\date{}

\maketitle

\begin{abstract} In this paper we develop a theory of linear differential systems analogous to the classical one for ODEs, including the obtaining of fundamental matrices, the development of a variation of parameters formula and the expression of the Green's functions.  We also derive interesting results in the case of differential equations with reflection and generalize the Hyperbolic Phasor Addition Formula to the case of matrices.
\end{abstract}

\noindent {\bf Keywords: Green's functions, ODEs, reflection, linear systems.}  

\section{Introduction}

In recent years there have been a number of works concerning the study of differential problems with involutions. In the particular case of the reflection, starting with \cite{Cab4}, the computation of Green's functions for differential equations with reflection expanded \cite{Toj3,CTMal,CabToj,CabToj2,CTL,Sars,Sars2}. This has motivated several applications concerning almost periodic solutions \cite{Pia3, Pia4}, the obtaining of eigenvalues and explicit solutions of different problems \cite{Krits,Krits2} or their qualitative properties \cite{Ashy,Cab5}.

On the other hand, what happens in the case of linear differential systems with reflection and constant coefficients has  drawn far less attention \cite{Ait}. The authors intend, in this article, to provide some insight on this question. To do so, in Section \ref{sec2} we retake the original problem (for differential equations with reflection), providing two interesting results. First, we give an improvement on the general Reduction Theorem --see for instance \cite[Theorem 5.1.1]{CTL}, here Theorem \ref{thmdec}, that reduces the order of the resulting ODE --see Theorem \ref{hatdef}.  Second, we provide an explicit basis of the space of solutions of linear differential equations with reflection and constant coefficients --see Theorem \ref{strthm}. This knowledge is fundamental if our intention is to construct a fundamental matrix of the associated homogeneous problem.\par
This first part of the paper suggests that a similar attempt should be done in the case of systems of differential equations with reflections. Our approach will run in parallel to the classical theory of linear ODEs: construction of a fundamental matrix, description of the method of variation of parameters and, finally, obtaining of the associated Green's function. Unfortunately, this process will not be devoid of  difficulties. That is why in Section \ref{sec3} we will summarize some results concerning matrix functions which will be useful latter on.
\par
We start the study of systems of linear equations with reflection in Section \ref{sec4}. The first thing we do is to \textit{define} what a fundamental matrix is going to be in this setting. This is not obvious, for there are some properties that are satisfied in the case of systems of ODEs which will not apply here. For instance, contrary to our experience, a fundamental matrix \textit{may} be singular at some point of the real line. Once this definition is properly established, it is  time to derive the most basic results of the theory: those concerning existence and uniqueness of solution. Existence is derived from the analogous result for systems of ODEs --Lemma \ref{lemuos}, while existence is obtained through the direct construction of a fundamental matrix. This result --Theorem \ref{thmexpfm}, arguably one of the main results of the paper, expresses this fundamental matrix as a series of functional matrices. It is only under some extra assumptions that a simpler expression involving hyperbolic trigonometric functions may be found. The rest of the Section consists of rewriting this fundamental matrix in other ways. In order to achieve this, we have to generalize the Hyperbolic Phasor Addition Formula \cite[Lemma 1]{Toj4} to the algebra of matrices.\par
Section \ref{sec5} concerns the method of variation of parameters. Again, the method differs from the one we have in the case of ODEs. First we show that the classical approach does not work in this setting and then, studying a complementary problem, we arrive to a general method --Theorem~\ref{vpft}.\par
Finally, in Section \ref{sec6} we use the method of variation of parameters to obtain the Green's function both in the initial condition and the two point boundary condition cases. This is a natural generalization of the previous settings when concerning differential equations with reflections. To illustrate this point we recover, as shown in Example \ref{le}, the known expression of the Green's function for a first order periodic equation with reflection.
\par
\section{Differential equations with reflection}\label{sec2}
Let us introduce some definitions and notations. To start with, consider the differential operator $D$, the pullback operator of the reflection $\phi(t)=-t$, denoted by $\phi^*(u)(t):=u(-t)$, and the identity operator, $\Id$ (we will also denote by $\Id$ the identity matrix). \par
Let $T\in(0,+\infty)$ and $I:=[-T,T]$. We now consider the ring $\bR[D]$ of polynomials with real coefficients on the variable $D$ and the algebra $\bR[D,\phi^* ]$ consisting of the operators of the form
\begin{equation}\label{Lop}L:=\phi^*P(D)+Q(D),\end{equation}
where $P(D)=\sum_{k=0}^nb_kD^k$, $Q(D)=\sum_{k=0}^na_kD^k\in\bR[D]$ ($D^0=\Id$), $n\in\bN$, $a_k,b_k\in\bR,\ k=1,\dots,n$ which act as follows:
\begin{displaymath}Lu(t)=\sum_{k=0}^na_ku^{(k)}(t)+\sum_{k=0}^nb_ku^{(k)}(-t),\ t\in I,\end{displaymath}
on any function $u\in W^{n,1}(I)$.
The operation in the algebra $\bR[D,\phi^*]$ is the usual composition of operators (most of the time we will omit the composition sign). We observe that $D^k\phi^*=(-1)^k\phi^*D^k$ for $k=0,1,\dots$, which makes it a \textit{noncommutative algebra}. Actually, we have that $P(D)\phi^*=\phi^*P(-D)$ for any $P\in\bR[D]$.\par

 The following property is crucial for the obtaining of a Green's function.
\begin{thm}[{\cite[Theorem 2.1]{CTMal}}]\label{thmdec}
Take $L$ as defined in \eqref{Lop} and define
\begin{equation}\label{Rop}{L_1}:=\phi^*P(D)-Q(-D)\in\bR[D,\phi^* ].\end{equation}  Then ${L_1}L=L{L_1}\in\bR[D]$.
\end{thm}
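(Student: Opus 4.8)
The plan is to verify the identity by a direct algebraic computation inside the noncommutative algebra $\bR[D,\phi^*]$, relying on two facts already recorded in the excerpt: the commutation rule $P(D)\phi^*=\phi^*P(-D)$, valid for every $P\in\bR[D]$, together with the involution property $\phi^*\phi^*=\Id$ (immediate from $\phi(t)=-t$), and the commutativity of the subring $\bR[D]$.

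First I would expand $L_1 L=(\phi^*P(D)-Q(-D))(\phi^*P(D)+Q(D))$ into four terms. The purely reflected term simplifies as $\phi^*P(D)\phi^*P(D)=\phi^*\phi^*P(-D)P(D)=P(-D)P(D)$, which already lies in $\bR[D]$. The cross terms are $\phi^*P(D)Q(D)$ and $-Q(-D)\phi^*P(D)$; applying the commutation rule in the form $Q(-D)\phi^*=\phi^*Q(D)$ turns the second into $-\phi^*Q(D)P(D)$, and since $P(D)$ and $Q(D)$ commute in $\bR[D]$ these two terms cancel. The remaining term is $-Q(-D)Q(D)$, so that
$$L_1L=P(-D)P(D)-Q(-D)Q(D)\in\bR[D].$$

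Next I would perform the symmetric computation for $LL_1=(\phi^*P(D)+Q(D))(\phi^*P(D)-Q(-D))$. The reflected term again contributes $P(-D)P(D)$; the cross terms $-\phi^*P(D)Q(-D)$ and $Q(D)\phi^*P(D)=\phi^*Q(-D)P(D)$ cancel by commutativity in $\bR[D]$; and the final term is $-Q(D)Q(-D)=-Q(-D)Q(D)$. Hence $LL_1$ equals the same polynomial $P(-D)P(D)-Q(-D)Q(D)$, establishing both $L_1L=LL_1$ and the membership in $\bR[D]$ simultaneously.

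There is no serious obstacle here; the only point requiring care is the bookkeeping of signs when moving $\phi^*$ past a polynomial in $D$, that is, consistently replacing $D$ by $-D$ each time a factor of $\phi^*$ is commuted across. The structural reason the off-diagonal terms vanish --- and hence the product lands in the commutative ring $\bR[D]$ --- is precisely the commutativity of $\bR[D]$ combined with the sign flip built into the definition of $L_1$, where $Q(D)$ is replaced by $-Q(-D)$.
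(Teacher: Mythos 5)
Your proof is correct and follows essentially the same route the paper takes: the paper cites Theorem~\ref{thmdec} from \cite{CTMal} without reproving it, but its proof of the refinement Theorem~\ref{hatdef} is exactly your computation --- expand the product, use $P(D)\phi^*=\phi^*P(-D)$ and $\phi^*\phi^*=\Id$, and cancel the cross terms by commutativity of $\bR[D]$ --- just carried out with $\til P$, $\til Q$ and the extra factor $R(D)$. Your argument is the special case $R(D)=\Id$, and both computations land on the same expression $P(-D)P(D)-Q(-D)Q(D)$.
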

\begin{rem}\label{remcoefred}
As it is pointed out in \cite{CTMal}, if ${L_1}L=\sum_{k=0}^{2n} c_kD^k$, then
\begin{displaymath}c_k=\begin{dcases} 0, & k \text{ odd,} \\
2\sum_{l=0}^{\frac{k}{2}-1}\(-1\)^l\(b_lb_{k-l}-a_la_{k-l}\)+\(-1\)^\frac{k}{2}\(b_\frac{k}{2}^2-a_\frac{k}{2}^2\), & k \text{ even.}\end{dcases}\end{displaymath}
\end{rem}
If $L=\sum_{i=0}^n \(b_i\phi^*+a_i\)D^i$ with $a_n\ne0$ or $b_n\ne0$, we have that $c_{2n}=(-1)^n(b_n^2-a_n^2)$. Hence, if $a_n=\pm b_n$, then $c_{2n}=0$. This shows that composing two elements of $\bR[D,\phi^* ]$ we can get another element with derivatives of less order.\par

We can improve Theorem~\ref{thmdec} in the following way. Let \[R(D):=\operatorname{mcd}(P(D),Q(D),P(-D),Q(-D)),\  \til P=P/R\text{ and }\til Q=Q/R.\]
Observe that $R$ is the polynomial constructed from the common roots, according to multiplicity, of $P(D)$, $Q(D),$ $P(-D)$ and $Q(-D)$.  $R(D)=R(-D)$, for if $\l$ is an root of $P(D)$ so has to be of $P(-D)$, and so $-\l$ has to be a root of $P(D)$. An important consequence of this is that $R$ commutes with $\phi^*$. We now have all it is needed for an improved version of Theorem~\ref{thmdec}.
\begin{thm}\label{hatdef}
	Take $L$, $R$, $\til P$ and $\til Q$ as previously defined and define
	\begin{equation}\label{Rop2}\widehat{L}:=\phi^*\til P(D)-\til Q(-D)\in\bR[D,\phi^* ].\end{equation}  Then $\widehat{L}L=L\widehat{L}\in\bR[D]$.
\end{thm}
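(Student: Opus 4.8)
The plan is to reduce the statement directly to Theorem~\ref{thmdec} by factoring out the common divisor $R$. Since $P=R\til P$ and $Q=R\til Q$, I would first write
\[
L=\phi^*P(D)+Q(D)=\phi^*R(D)\til P(D)+R(D)\til Q(D).
\]
Using that $R(-D)=R(D)$ --which, as already noted, means $R$ commutes with $\phi^*$, i.e. $R(D)\phi^*=\phi^*R(-D)=\phi^*R(D)$-- I can pull $R(D)$ to the left and obtain the factorization
\[
L=R(D)\bigl(\phi^*\til P(D)+\til Q(D)\bigr)=R(D)\til L,
\qquad\text{where } \til L:=\phi^*\til P(D)+\til Q(D).
\]

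The second step is to recognize that $\widehat L$ is precisely the operator ``$L_1$'' of Theorem~\ref{thmdec} associated to $\til L$ rather than to $L$: comparing \eqref{Rop} with \eqref{Rop2}, replacing $P,Q$ by $\til P,\til Q$ turns $L_1=\phi^*P(D)-Q(-D)$ into $\phi^*\til P(D)-\til Q(-D)=\widehat L$. Hence Theorem~\ref{thmdec}, applied to $\til L$, gives at once
\[
\widehat L\,\til L=\til L\,\widehat L\in\bR[D].
\]

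Finally I would transport this identity back to $L$ using the commutation relations. Because $R$ is an even polynomial in $D$, $R(D)$ commutes not only with $\phi^*$ but with every polynomial in $D$, and a short computation shows that $R(D)$ commutes with both $\til L$ and $\widehat L$ (the sign flip $R(-D)=R(D)$ neutralizes the anticommutation $D^k\phi^*=(-1)^k\phi^*D^k$). Then
\[
\widehat L\,L=\widehat L\,R(D)\,\til L=R(D)\,\widehat L\,\til L=R(D)\,\til L\,\widehat L=L\,\widehat L,
\]
and since $\widehat L\,\til L\in\bR[D]$, multiplying by the polynomial $R(D)$ keeps the product inside $\bR[D]$, which finishes the proof.

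I do not expect a deep obstacle here, as the content is entirely structural: the whole argument hinges on the single observation that $R(-D)=R(D)$. The one point requiring genuine care is the bookkeeping of the noncommutative relations when verifying that $R(D)$ commutes with $\til L$ and with $\widehat L$; it is exactly the evenness of $R$ that makes these commutations hold, so once that is in place the rest is a short and forced computation.
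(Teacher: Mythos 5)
Your proposal is correct, and it takes a genuinely more modular route than the paper. The paper's own proof also begins by factoring out $R(D)$ (writing $\phi^*P(D)+Q(D)=[\phi^*\til P(D)+\til Q(D)]R(D)$), but it then expands both products $\widehat{L}L$ and $L\widehat{L}$ explicitly, checking that the cross terms cancel and that both equal $[\til P(-D)\til P(D)-\til Q(-D)\til Q(D)]R(D)$ --- in effect it re-runs the computation of Theorem~\ref{thmdec} for the reduced polynomials rather than citing it. You instead observe that $\widehat L$ is exactly the companion operator \eqref{Rop} of Theorem~\ref{thmdec} associated to $\til L:=\phi^*\til P(D)+\til Q(D)$, invoke that theorem as a black box to get $\widehat L\,\til L=\til L\,\widehat L\in\bR[D]$, and transport the identity to $L=R(D)\til L$ via the commutations that the evenness $R(-D)=R(D)$ makes available; your verification of those commutations is sound, and the final chain of equalities is complete. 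What your argument buys is brevity and reuse of the earlier theorem; what the paper's direct expansion buys is the explicit closed form $\widehat{L}L=[\til P(-D)\til P(D)-\til Q(-D)\til Q(D)]R(D)$ of the resulting differential operator, which is the kind of information exploited later (cf.\ Remark~\ref{remcoefred} and the root/degree analysis in Theorem~\ref{strthm}) and which your black-box appeal does not directly exhibit, though it can be recovered. One cosmetic point: you say that evenness of $R$ is what makes $R(D)$ commute ``with every polynomial in $D$''; polynomials in $D$ always commute with one another, and the evenness is needed only for the commutation with $\phi^*$ --- your actual computations use it correctly, so this is a matter of phrasing, not a gap.
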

\begin{proof}
\begin{align*}\widehat{L}L & =[\phi^*\til P(D)-\til Q(-D)][\phi^* P(D)+ Q(D)]=[\phi^*\til P(D)-\til Q(-D)][\phi^* \til P(D)+\til Q(D)]R(D)\\ & =[\til P(-D)\til P(D)+\phi^*\til P(D)\til Q(D)-\phi^*\til Q(D)\til P(D)-\til Q(-D)\til Q(D)]R(D)\\ & =[\til P(-D)\til P(D)-\til Q(-D)\til Q(D)]R(D).\end{align*}
On the other hand,
\begin{align*}L\widehat{L} & =[\phi^* P(D)+ Q(D)][\phi^*\til P(D)-\til Q(-D)]=[\phi^* \til P(D)+\til Q(D)]R(D)[\phi^*\til P(D)-\til Q(-D)]\\ & =[\til P(-D)\til P(D)+\phi^*\til Q(-D)\til P(D)-\phi^*\til P(D)\til Q(-D)-\til Q(-D)\til Q(D)]R(D)\\ & =[\til P(-D)\til P(D)-\til Q(-D)\til Q(D)]R(D).\end{align*}
\end{proof}

As we said before, $\widehat{L}L$ is a usual differential operator with constant coefficients. Consider now the following problem.
\begin{equation}\label{lccbvp}Su(t):=\sum_{k=0}^na_ku^{(k)}(t)=h(t),\ t\in I,\ B_ku:=\sum_{j=0}^{n-1}\left[\a_{kj}u^{j)}(-T)+\b_{kj}u^{(j)}(T)\right]=0,\ k=1,\dots,n.
\end{equation}


Given an operator $\cL$ for some set of functions of one variable, we will define the operator $\cL_\vdash$ as $\cL_\vdash G(t,s):=\cL(G(\cdot,s))|_{t}$ for every $s$ and any suitable function $G$ of two variables. We can then state the following result, which is a new version of the one found in \cite{CTMal} using the operator $\widehat L$ defined here. The proof is analogous.
\begin{thm}\label{thmdei} Consider the problem
\begin{equation}\label{rbvp}Lu(t)=h(t),\ t\in I,\ B_iu=0,\ k=1,\dots,n,
\end{equation}
where $L$ is defined as in \eqref{Lop}, $h\in L^1(I)$ and
\begin{displaymath}B_ku:=\sum_{j=0}^{n-1}\left[\a_{kj}u^{(j)}(-T)+\b_{kj}u^{(j)}(T)\right],\ k=1,\dots,n.\end{displaymath}
Then, there exists $\widehat{L}\in \bR[D,\phi^* ]$ --as in \eqref{Rop}-- such that $S:=\widehat{L}L\in\bR[D]$ and the unique solution of problem~\eqref{rbvp} is given by $\int_a^b\widehat{L}_\vdash G(t,s)h(s)\dif s,$ where $G$ is the Green's function associated to the problem $Su=0$, $B_k\widehat{L}u=0$, $B_ku=0$, $k=1,\dots,n$, assuming it has a unique solution.
\end{thm}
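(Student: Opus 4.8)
The plan is to reduce the reflection boundary value problem \eqref{rbvp} to the ordinary differential problem governed by $S=\widehat{L}L\in\bR[D]$, to solve the latter with its Green's function $G$, and then to transport the result back through the operator $\widehat{L}$. Everything hinges on the commutation identity of Theorem~\ref{hatdef}, namely $\widehat{L}L=L\widehat{L}=S$.

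I would begin by constructing a candidate. Put $w(t):=\int_{-T}^{T}G(t,s)h(s)\,\dif s$, so that $w$ is the unique solution of $Sw=h$ under the $2n$ boundary conditions $B_kw=0$ and $B_k\widehat{L}w=0$ ($k=1,\dots,n$) furnished by the assumed unique solvability of the $S$-problem; here the conditions $B_k\widehat{L}w=0$ will encode the original boundary data, while the conditions $B_kw=0$ merely serve to pin $w$ down among the solutions of the order-$(\le 2n)$ equation $Sw=h$. Setting $u:=\widehat{L}w$ and noting that differentiation in $t$ and the reflection $\phi^*$ both commute with integration in $s$, one passes $\widehat{L}$ under the integral to obtain $u(t)=\int_{-T}^{T}\widehat{L}_\vdash G(t,s)h(s)\,\dif s$, the asserted expression. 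I would then verify directly that $u$ solves \eqref{rbvp}: by Theorem~\ref{hatdef}, $Lu=L\widehat{L}w=Sw=h$, and for the boundary data $B_ku=B_k\widehat{L}w=0$ because $w$ satisfies the condition $B_k\widehat{L}w=0$ built into $G$. A brief regularity check shows $u$ lands in $W^{n,1}(I)$, since $w$ gains the order of $S$ and $\widehat{L}$ spends exactly the complementary number of derivatives, so that $Lu=h$ is a legitimate identity in $L^1(I)$.

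For uniqueness it suffices to show that the homogeneous problem $Lu=0$, $B_ku=0$ has only the trivial solution. Any such $u$ satisfies $Su=\widehat{L}Lu=0$ and $B_ku=0$, so it lies in the finite-dimensional solution space of $Lu=0$. Since the construction above produces a solution of \eqref{rbvp} for \emph{every} right-hand side $h\in L^1(I)$, and the number of boundary functionals $B_k$ coincides with the dimension of the solution space of $Lu=0$ (the explicit description of that space being available from the structure results for constant-coefficient equations with reflection), a Fredholm-type alternative forces surjectivity and injectivity to occur simultaneously; hence the homogeneous problem is trivial and the solution of \eqref{rbvp} is unique.

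The step I expect to be most delicate is the bookkeeping of boundary conditions underpinning uniqueness. One must confirm that the $2n$ functionals $\{B_k,\ B_k\widehat{L}\}$ are genuinely the correct companions for the reduced operator $S$, so that passing from $L$ (with its $n$ conditions) to $S$ neither discards nor manufactures constraints, and that a homogeneous solution of the $L$-problem does inherit enough of the $S$-conditions for the assumed unique solvability of the $S$-problem to apply. The remaining ingredients --- interchanging $\widehat{L}$ with the integral and the regularity of $G(\cdot,s)$ --- are routine once the order-$(\le 2n)$ Green's function $G$ is in hand, and the algebraic heart of the argument is simply the relation $L\widehat{L}=\widehat{L}L=S$ of Theorem~\ref{hatdef}.
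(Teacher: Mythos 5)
Your existence construction coincides with the argument the paper intends (the paper gives no written proof, deferring to \cite{CTMal}, whose proof is exactly this): let $w=\int G(\cdot,s)h(s)\,\dif s$ solve $Sw=h$, $B_kw=B_k\widehat{L}w=0$, set $u=\widehat{L}w$, use $L\widehat{L}=\widehat{L}L=S$ from Theorem \ref{hatdef} to get $Lu=Sw=h$, read off $B_ku=B_k\widehat{L}w=0$, and pass $\widehat{L}$ under the integral. That half is correct.

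The uniqueness half has a genuine gap. The pivot of your Fredholm argument --- that the number $n$ of functionals $B_k$ equals $\dim\ker L$ --- is not ``available from the structure results'': Theorem \ref{strthm} gives $\dim\ker L=m$, where $2m=\deg(\widehat{L}L)$, and the paper stresses (after Remark \ref{remcoefred}) that this degree can fall strictly below $2n$ (for instance $a_n=\pm b_n$ forces $c_{2n}=0$); producing exactly this drop is the purpose of replacing $L_1$ by $\widehat{L}$. So $m=n$ is an additional fact that would have to be extracted from the hypothesis that the $S$-problem (an operator of order $2m$ subject to $2n$ side conditions) admits a Green's function, and you never do this. Moreover, even granting $m=n$, ``solvable for every $h$'' does not by itself yield injectivity: it only shows that the boundary vectors $(B_1u_p,\dots,B_nu_p)$ of particular solutions lie in the image of the boundary map $\Phi\colon\ker L\to\bR^n$, $w\mapsto(B_1w,\dots,B_nw)$; you still need these vectors to span $\bR^n$ before ``surjective, hence injective'' applies, and that requires a separate argument. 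A repair that stays inside the paper's toolbox avoids Fredholm altogether and reduces directly to the assumed uniqueness for the $S$-problem: if $Lw=0$ and $B_kw=0$, Theorem \ref{strthm} lets you write $w=\widehat{L}y$ with $y\in\ker(\widehat{L}L)$, so $Sy=0$ and $B_k\widehat{L}y=B_kw=0$; the map $z\mapsto(B_1z,\dots,B_nz)$ is injective on $\ker\widehat{L}$ (any $z$ in its kernel solves the homogeneous $S$-problem, hence vanishes), so when $m=n$ it is onto $\bR^n$ and one can choose $z\in\ker\widehat{L}$ with $B_kz=B_ky$ for all $k$; then $y-z$ solves the homogeneous $S$-problem, whence $y=z$ and $w=\widehat{L}z=0$. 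This reduction step, in one form or another, is what your proposal is missing.
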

\subsection{The structure of solutions}
Let us consider now the operator $\widehat{L}L\in\bR[D]$. Observe that, by Remark~\ref{remcoefred}, $\widehat{L}L$ has only nonzero coefficients for even exponents of $D$. This means that $\widehat{L}L$ is of even degree, say $2m$ for $m\in\bN$, and that, if $z$ is root of $\widehat{L}L$, so it is $-z$. Hence, if $\widehat{L}L$ has $2 r$ real roots $\pm\l_1,\dots,\pm\l_r$ of multiplicities $\mu_1,\dots,\mu_r$ \footnote{Here one of the roots can be $0$, in which case has even multiplicity greater equal than 2.} and $2c$ complex roots $z_1=\pm(x_1+iy_1),\dots,z_c=\pm(x_c+iy_c)$ of multiplicities $\nu_1,\dots,\nu_2$, the $2m$-dimensional real vector space $V$ of solutions of $\widehat{L}Lu=0$ is generated by the basis of solutions\footnote{This basis would have to be rewritten in the case $0$ is a root in order to not repeat vectors.}
\begin{align*}& \hspace*{-.5em}\left\{  e^{\l_1 t},\dots, t^{\mu_1-1}e^{\l_1 t},e^{-\l_1 t},\dots, t^{\mu_1-1}e^{-\l_1 t},\dots,e^{\l_r t},\dots, t^{\mu_r-1}e^{\l_r t},e^{-\l_r t},\dots, t^{\mu_r-1}e^{-\l_r t}, e^{x_1t}\sin{y_1 t},\right.\\ & \left.e^{x_1t}\cos{y_1 t},\dots,t^{\nu_1-1}e^{x_1t}\sin{y_1 t},t^{\nu_1-1}e^{x_1t}\cos{y_1 t},e^{-x_1t}\sin{y_1 t},e^{-x_1t}\cos{y_1 t},\dots,t^{\nu_1-1}e^{-x_1t}\sin{y_1 t},\right.\\ & \left.t^{\nu_1-1}e^{-x_1t}\cos{y_1 t}, e^{x_ct}\sin{y_c t},e^{x_ct}\cos{y_c t},\dots,t^{\nu_c-c}e^{x_ct}\sin{y_c t},t^{\nu_c-c}e^{x_ct}\cos{y_c t},e^{-x_ct}\sin{y_c t},\right.\\ & \left.e^{-x_ct}\cos{y_c t},\dots,t^{\nu_c-c}e^{-x_ct}\sin{y_c t},t^{\nu_c-c}e^{-x_ct}\cos{y_c t}\right\}.\end{align*}
\begin{thm}\label{strthm}With the previous notation,
\begin{equation}\begin{aligned}\label{basis}&\hspace*{-.5em}\left\{ \widehat{L}( e^{\xi(\l_1)\l_1 t}),\dots,\widehat{L}( t^{\mu_1-1}e^{\xi(\l_1)\l_1 t}),\dots,\widehat{L}(e^{\xi(\l_r)\l_r t}),\dots,\widehat{L}( t^{\mu_r-1}e^{\xi(\l_r)\l_r t}),\right.\\ & \left. \widehat{L}(e^{\xi(z_1)x_1t}\sin{y_1 t}),\widehat{L}(e^{\xi(z_1)x_1t}\cos{y_1 t}),\dots,\widehat{L}(t^{\nu_1-1}e^{\xi(z_1)x_1t}\sin{y_1 t}),\widehat{L}(t^{\nu_1-1}e^{\xi(z_1)x_1t}\cos{y_1 t}),\right.\\ & \left.\widehat{L}(e^{\xi(z_c)x_ct}\sin{y_c t}),\widehat{L}(e^{\xi(z_c)x_ct}\cos{y_c t}),\dots,\widehat{L}(t^{\nu_c-1}e^{\xi(z_c)x_ct}\sin{y_c t}),\widehat{L}(t^{\nu_c-1}e^{\xi(z_c)x_ct}\cos{y_c t})\right\},\end{aligned}\end{equation}
where $\xi(x)=-1$ if $\til P(x)=\til Q(-x)=0$ and $\xi(x)=1$ otherwise, is a basis of the $m$-dimensional vector space of solutions of the equation $Lu=0$.
\end{thm}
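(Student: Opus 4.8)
The plan is to transport solutions along the intertwining $\widehat L L=L\widehat L$ of Theorem~\ref{hatdef}. First I would record a preliminary algebraic fact: after dividing by $R$, the four polynomials $\til P(D),\til Q(D),\til P(-D),\til Q(-D)$ have no common root. This is because $R=\operatorname{mcd}(P(D),Q(D),P(-D),Q(-D))$ and $R(D)=R(-D)$, so that $\til P(-D)=P(-D)/R(D)$ and $\til Q(-D)=Q(-D)/R(D)$ are genuinely the reductions of the last two, and dividing all four by their greatest common divisor leaves them coprime. This is precisely what explains the function $\xi$: for a root $\rho$ the value $\xi(\rho)$ is the sign making $\til P(\xi(\rho)\rho)\ne 0$ or $\til Q(-\xi(\rho)\rho)\ne 0$, since $\pm\rho$ cannot both be common zeros of $\til P(D)$ and $\til Q(-D)$ without contradicting coprimality.

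Next I would note that $\widehat L$ carries $V=\ker(\widehat L L)$ into the solution space of $Lu=0$: if $\widehat L L u=0$ then $L(\widehat L u)=(\widehat L L)u=0$. Every function in \eqref{basis} is $\widehat L$ applied to a member of the displayed basis of $V$, hence solves $Lu=0$. It then suffices to prove that these functions are linearly independent and that there are exactly $\dim\ker L$ of them.

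The core is independence, and this is where $\xi$ does its work. From $A(D)(t^je^{\rho t})=e^{\rho t}\,A(D+\rho)t^j$ for any polynomial $A$ one gets
\[
\widehat L(t^je^{\rho t})=e^{-\rho t}\big(\til P(D+\rho)t^j\big)\big|_{t\mapsto-t}-e^{\rho t}\,\til Q(-D-\rho)t^j,
\]
so the component of $\widehat L(t^je^{\rho t})$ in the span of $\{t^\ell e^{\rho t}\}$ has top term $-\til Q(-\rho)t^je^{\rho t}$, while its component in the span of $\{t^\ell e^{-\rho t}\}$ has top term $\pm\til P(\rho)t^je^{-\rho t}$. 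Because the listed functions use only one sign of each pair $\pm\rho$, in any relation $\widehat L w=0$ the $e^{\rho t}$ part isolates the $\rho$ part of $w$; invertibility on polynomials of $\til Q(-D-\rho)$ when $\til Q(-\rho)\ne 0$, or of $\til P(D+\rho)$ when $\til P(\rho)\ne 0$ (one of which $\xi$ guarantees), forces that part of $w$ to vanish. Ranging over all chosen roots yields $w=0$. The complex roots are handled identically after pairing each root with its conjugate and arguing over $\bC$.

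Finally I would count: the family in \eqref{basis} has $\sum_i\mu_i+2\sum_i\nu_i$ members, and the assertion is that this equals $\dim\ker L$. Independence already gives $\dim\ker L\ge$ this number, so the real content, and the step I expect to be the main obstacle, is the reverse inequality, namely that the selected family \emph{spans} $\ker L$. I would attack $\dim\ker L=m$ through the map $\widehat L\colon V\to\ker L$, exploiting the symmetric companion $\widehat{\widehat L}=\til L$ with $\til L\widehat L=\widehat L\til L=\til P(-D)\til P(D)-\til Q(-D)\til Q(D)$ together with the coprimality above to pin down the relevant kernel dimensions. The delicate points are exactly the degenerate configurations flagged in the footnotes: a root $\rho=0$ or purely imaginary roots make the $+\rho$ and $-\rho$ blocks coincide, so the family must be rewritten to remove repetitions and the count re-checked; and any factor surviving in $R$ tends to enlarge $\ker L$ beyond the naive half-count, so it is in the spanning step that the passage from $P,Q$ to the reduced $\til P,\til Q$ has to be used in full.
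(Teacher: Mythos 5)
Up to the point where you stop, your argument is the paper's argument: the coprimality of $\til P(D),\til Q(D),\til P(-D),\til Q(-D)$ after division by $R$, the transport of $V=\ker(\widehat L L)$ into $\ker L$ via $L\widehat L=\widehat L L$, and the independence of \eqref{basis} read off from the leading coefficients $-\til Q(-\rho)$ and $\pm\til P(\rho)$, with $\xi$ choosing the sign for which at least one of them survives, all match the paper's proof step for step. The genuine gap is the step you explicitly defer: you never prove $\dim\ker L=m$, i.e.\ that your $m$ independent solutions span. Saying you ``would attack'' it through $\widehat L\colon V\to\ker L$ and the companion $\widehat{\widehat L}$ is a plan, not a proof, and it is exactly the non-routine half of the theorem. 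For the record, the paper closes it by rank--nullity inside $V$: both $L$ and $\widehat L$ map the $2m$-dimensional space $V$ into itself (they commute with $\widehat L L$); the span $W$ of \eqref{basis} gives $\dim\ker L|_V\ge m$ and $\dim\im\widehat L|_V\ge m$; repeating the construction with $L$ and $\widehat L$ interchanged (using $\widehat{\widehat L}R=L$) gives $\dim\ker\widehat L|_V\ge m$ and $\dim\im L|_V\ge m$; since kernel and image dimensions of each operator on $V$ sum to $2m$, all four equal $m$, whence $W=\ker L$.

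That said, your closing worry---that ``any factor surviving in $R$ tends to enlarge $\ker L$ beyond the naive half-count''---is not a removable technicality, and it is precisely where the paper's own symmetric step is fragile. Since $R(D)=R(-D)$ commutes with $\phi^*$, one has $L=[\phi^*\til P(D)+\til Q(D)]R(D)=\widehat{\widehat L}R(D)$, so $\ker R(D)\subset\ker L$. Concretely, take $P(D)=(D^2-1)(D+3)$, $Q(D)=(D^2-1)(D+2)$: then $R=D^2-1$, $\til P=D+3$, $\til Q=D+2$, and $\widehat L L=[(9-D^2)-(4-D^2)](D^2-1)=5(D^2-1)$, so $m=1$; yet $e^{t}$ and $e^{-t}$ both solve $Lu=0$, while \eqref{basis} is the single function $\widehat L(e^{t})=4e^{-t}-e^{t}$. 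So the dimension count you left open cannot be completed when $\deg R>0$: the theorem needs $\deg R=0$ (besides the nondegeneracy of the root pattern flagged in the paper's footnotes), and correspondingly the paper's exchange of $L$ and $\widehat L$ really involves $\widehat{\widehat L}\widehat L=\widehat L L/R$, of degree $2m-\deg R$, which only yields $\dim\ker\widehat L\ge m-\frac{1}{2}\deg R$. Your instinct about where the difficulty sits was sound, but as submitted the spanning half of the proof is missing, and carrying it out along the route you sketch requires first restricting to $\deg R=0$ and then running the rank--nullity count above.
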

\begin{proof}
Let $W$ be the real vector space  generated by~\eqref{basis}. First, by the definition of $\widehat L$, we have that, for a root $\l$ of $\widehat L L$ and $k\in\bN$,
\[\widehat{L}(t^{k}e^{\l t})=[\phi^*\til P(D)-\til Q(-D)](t^{k}e^{\l t})=(-1)^{k}\til P(\l)t^{k}e^{-\l t}-\til Q(-\l)t^{k}e^{\l t}+O(t^{k-1}e^{\l t})+O(t^{k-1}e^{-\l t}),
\]
where the notation $O(f(t))$ means that $\lim\limits_{t\to\infty}O(f(t))/f(t)$ is a real constant.
All the same,
\[\widehat{L}(t^{k}e^{-\l t})=(-1)^{k}\til P(-\l)t^{k}e^{\l t}-\til Q(\l)t^{k}e^{-\l t}+O(t^{k-1}e^{-\l t})+O(t^{k-1}e^{\l t}).
\]
Thus, since $\l$ cannot be a common root to $\til P(\l),\til P(-\l),\til 
Q(\l)$ and $\til Q(-\l)$, we have
\begin{align*}\widehat{L}(e^{\xi(\l)\l t}) & \in\<\{e^{\l t},e^{-\l t}\}\>,\\ \widehat{L}(t^{k}e^{\xi(\l)\l t}) & \in\left.\left<\{e^{\l t},\dots,t^{k}e^{\l t},e^{-\l t},\dots,t^{k}e^{-\l t}\}\>\right\backslash\<\{e^{\l t},\dots,t^{k-1}e^{\l t},e^{-\l t},\dots,t^{k-1}e^{-\l t}\}\>.\end{align*}
This means that the vectors $\widehat{L}( e^{\l_1 t}),\dots,\widehat{L}( t^{\mu_1-1}e^{\l_1 t}),\dots,\widehat{L}(e^{\l_r t}),\dots,\widehat{L}( t^{\mu_r-1}e^{\l_r t})$ are linearly independent.\par In the case of a complex root $z=x+iy$ we just have to use the following invertible linear transformation
\begin{center}\begin{tikzcd}[row sep=tiny]
\<\{e^{zt},e^{-zt}\}\> \arrow{r} & \<\{e^{zt},e^{-zt}\}\> &\\
(e^{zt},e^{-zt}) \arrow[mapsto]{r} & \frac{1}{2} \begin{pmatrix}1 & 1 \\ 1 & -1 \end{pmatrix}(e^{zt},e^{-zt}) & \hspace{-3em}=(e^{xt}\cos yt,e^{xt}\sin yt),
\end{tikzcd}
\end{center}
to deduce that the vectors~\eqref{basis} are linearly independent. Hence, $W$ is of dimension $m$.\par
By what was previously said, we know that $L|_{V}\equiv L$ maps $V$ to $V$. As such, for every $u\in W\subset V$, $u=\widehat Lv$ for some $v\in V$, so $Lu=L\widehat Lv=\widehat L L v=0$ because $V$ is the space of solutions of $\widehat L L=0$, that is, $V=\ker (\widehat L L)$. Hence, $W\subset\ker L$. Also, by construction, $W\subset \im\widehat L$, so we have that $\dim \ker L\ge m$ and $\dim \im \widehat L\ge m$.\par
 Since $\widehat{\widehat L}R=L$, we can repeat this process interchanging $L$ and $\widehat L$, so we deduce that $\dim \ker \widehat L\ge m$ and $\dim \im  L\ge\dim \ker  \widehat{\widehat L}\ge m$. Taking into account that $\dim \im  L+\dim \ker  L=\dim\im \widehat L+\dim \ker \widehat L=2m$, we conclude that $\dim \ker L=m$, which ends the proof.
\end{proof}
\section{Preliminaries: matrix functions}\label{sec3}
In the following section we will need some classical results concerning Matrix Theory which we summarize here for the convenience of the reader.

\begin{dfn}\label{matf}Let $M\in\cM_n(\bC)$ and $\l_1,\dots,\l_j$ be the (different) roots of the minimal polynomial of $M$\footnote{There exists a unique $Q\in\bC[x]$, called \emph{minimal polynomial of $M$}, such that its principal coefficient is equal to one, $Q$ divides $P(x):=|x\Id-M|$, $Q(M)=0$ and, if $R\in\bC[x]$, $R(M)=0$ and $R$ divides $Q$.}, with respective multiplicities $\mu_1,\dots,\mu_j$, and $f$ is a scalar-valued ($\bR$ or $\bC$) function such that the spectrum of $M$ is contained in the interior of the domain of $f$ and $f$ is $\mu_k-1$ times differentiable at $\l_k$ for $k=1,\dots,j$.
\par
Let $SJS^{-1}$ be the Jordan canonical form of $M$ for $S,J\in\cM_n(\bC)$, where
\[J:=\begin{pmatrix} J_{\l_1,1} & & 0 \\ & \ddots & \\ 0 & & J_{\l_j,n_j}\end{pmatrix},\]
and  $J_{\l_1,1},\dots,J_{\l_1,n_1},\dots,J_{\l_j,1},\dots,J_{\l_j,n_j}$ are de distinct Jordan blocks of  $J$.\par
Then we define the \emph{primary matrix function} $f(M)$ associated to the \emph{stem function} $f$ as
\[f(M):=S\begin{pmatrix} f(J_{\l_1,1}) & & 0 \\ & \ddots & \\ 0 & & f(J_{\l_j,n_j})\end{pmatrix}S^{-1},\]
where
\[f(J_{\l,k}):=\begin{pmatrix}
f(\l) & f'(\l) & \frac{1}{2} f''(\l) & \dots & \frac{1}{(m-1)!}f^{(m-1)}(\l) \\
0 & f(\l) & f'(\l)  & \dots & \frac{1}{(m-2)!}f^{(m-2)}(\l) \\
0& 0  & f(\l) &  \dots & \frac{1}{(m-3)!}f^{(m-3)}(\l) \\
\vdots & \vdots & \vdots  & \ddots & \vdots \\
0 & 0 & 0  & \dots & f(\l) \\
\end{pmatrix},\]
assuming $J_{\l,k}\in\cM_m(\bC)$.
\end{dfn}
\begin{lem}[{\cite[Theorem 6.2.9(b)]{HJ}}]Let $M\in\cM_n(\bC)$. $f(M)$ is independent of the particular Jordan canonical form used to represent $M$.
\end{lem}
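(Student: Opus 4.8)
The plan is to exhibit $f(M)$ as the value $p(M)$ of a single polynomial $p$ that depends only on $f$ and the minimal polynomial of $M$, and then to observe that $p(M)$ makes no reference whatsoever to a choice of Jordan decomposition. Write the minimal polynomial as $Q(x)=\prod_{k=1}^{j}(x-\l_k)^{\mu_k}$. By Hermite interpolation there is a (unique) polynomial $p\in\bC[x]$ with $\deg p<\deg Q$ satisfying the interpolation conditions $p^{(l)}(\l_k)=f^{(l)}(\l_k)$ for $0\le l\le\mu_k-1$ and $k=1,\dots,j$; these are precisely the conditions for which $f$ was assumed differentiable in Definition~\ref{matf}, so $p$ is well defined.

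The key computation is that $p$ reproduces $f$ on every Jordan block. If $J_{\l_k,i}\in\cM_m(\bC)$ is a block attached to $\l_k$, then $m\le\mu_k$, since the multiplicity of $\l_k$ in $Q$ equals the size of the largest such block. Writing $J_{\l_k,i}=\l_k\Id+N$ with $N$ nilpotent and $N^{m}=0$, the Taylor expansion of $p$ terminates, giving
\[p(J_{\l_k,i})=\sum_{l=0}^{m-1}\frac{p^{(l)}(\l_k)}{l!}N^l=\sum_{l=0}^{m-1}\frac{f^{(l)}(\l_k)}{l!}N^l.\]
Because $N^l$ is the matrix carrying ones on the $l$-th superdiagonal and zeros elsewhere, the right-hand side is exactly the upper-triangular Toeplitz matrix $f(J_{\l_k,i})$ from Definition~\ref{matf}. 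Applying this to each block of the block-diagonal matrix $J$ shows $p(J)=f(J)$.

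With this identity the conclusion is immediate. Let $M=SJS^{-1}$ be \emph{any} Jordan decomposition. Then
\[f(M)=S\,f(J)\,S^{-1}=S\,p(J)\,S^{-1}=p\big(SJS^{-1}\big)=p(M),\]
where the third equality uses that evaluation of a polynomial commutes with conjugation. The final expression $p(M)$ depends only on $M$ and $f$, and not on the similarity matrix $S$ nor on the ordering of the Jordan blocks inside $J$. Hence every admissible Jordan decomposition produces the same matrix $f(M)$, which is the assertion.

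I expect the only delicate point to be the block-size bound $m\le\mu_k$ combined with the recognition that the defining Toeplitz formula for $f(J_{\l_k,i})$ is exactly the truncated Taylor series of $p$ at $\l_k$. This is what ties the interpolation data to the minimal polynomial and makes transparent why differentiability of $f$ up to order $\mu_k-1$ is the right hypothesis. The remaining ingredients—existence of the Hermite interpolant and the commutation of polynomial evaluation with conjugation—are routine.
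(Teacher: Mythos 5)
Your proof is correct; the paper itself offers no argument for this lemma, citing it directly from \cite{HJ}, and your Hermite-interpolation argument is exactly the one underlying that cited result (it is, in effect, a proof of part 1 of Theorem~\ref{gft} together with the observation that polynomial evaluation commutes with conjugation and is Jordan-form independent). In particular, your key steps --- the block-size bound $m\le\mu_k$, the truncated Taylor expansion matching the Toeplitz definition of $f(J_{\l_k,i})$, and the conclusion $f(M)=p(M)$ --- are sound and complete.
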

\begin{thm}[{\cite[Theorem 6.2.8]{HJ}}]
Let $f$ be a scalar-valued analytic function with a power series representation $f(t)=\sum_{k=0}^\infty\a_kt^k$ that has radius of convergence $r>0$. If $M\in\cM_n(\bC)$ is such that $\rho(M)<r$, then the matrix power series $\sum_{k=0}^\infty\a_kM^k$ converges absolutely and is equal to $f(M)$.
\end{thm}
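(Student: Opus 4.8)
The plan is to establish the two assertions---absolute convergence of $\sum_{k=0}^\infty\a_k M^k$ and its coincidence with the primary matrix function $f(M)$ of Definition~\ref{matf}---separately, reducing in both cases to the Jordan blocks of $M$. Throughout I would use that every eigenvalue $\l$ of $M$ satisfies $|\l|\le\rho(M)<r$, so the whole spectrum lies strictly inside the disc of convergence and $f$ is analytic, hence infinitely differentiable, there.

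For the convergence, I would fix $\e>0$ with $\rho(M)+\e<r$, which is possible since $\rho(M)<r$. By Gelfand's spectral radius formula, $\rho(M)=\lim_{k\to\infty}\|M^k\|^{1/k}$ for any fixed submultiplicative matrix norm $\|\cdot\|$, so there is a $k_0$ with $\|\a_k M^k\|\le|\a_k|(\rho(M)+\e)^k$ for all $k\ge k_0$. Since $\rho(M)+\e$ lies inside the disc of convergence, the scalar series $\sum_k|\a_k|(\rho(M)+\e)^k$ converges, and comparison yields absolute convergence of the matrix series. I would denote its sum by $F(M)$, leaving only the identification $F(M)=f(M)$.

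To carry out that identification I would invoke the decomposition $M=SJS^{-1}$ from Definition~\ref{matf}. As $M^k=SJ^kS^{-1}$ and $J$ is block diagonal, it suffices to evaluate $\sum_k\a_k J_0^{\,k}$ on a single block $J_0=\l\Id+N$ of size $m$, where $N$ is the nilpotent shift with $N^m=0$. Expanding by the binomial theorem gives $J_0^{\,k}=\sum_{j=0}^{m-1}\binom{k}{j}\l^{k-j}N^j$, whence, after interchanging the finite sum in $j$ with the sum in $k$,
\[\sum_{k=0}^\infty\a_k J_0^{\,k}=\sum_{j=0}^{m-1}\left(\sum_{k=0}^\infty\a_k\binom{k}{j}\l^{k-j}\right)N^j.\]
The inner series is the term-by-term $j$-th derivative of $f$ divided by $j!$, i.e. $\tfrac{1}{j!}f^{(j)}(\l)$. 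Since $N^j$ carries ones on the $j$-th superdiagonal, the right-hand side is exactly the upper-triangular block $f(J_0)$ displayed in Definition~\ref{matf}. Reassembling the blocks yields $F(M)=S\,\diag\!\left(f(J_{\l_1,1}),\dots,f(J_{\l_j,n_j})\right)S^{-1}=f(M)$.

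The only delicate point, and the one I expect to be the main obstacle, is justifying that $\sum_{k}\a_k\binom{k}{j}\l^{k-j}=\tfrac{1}{j!}f^{(j)}(\l)$ together with the interchange of summations. This rests on the standard fact that the formal $j$-th derivative of a power series has the same radius of convergence $r$ and converges to $f^{(j)}$ throughout the open disc; because only finitely many powers of $N$ occur, the swap with the $j$-sum is automatic, and the $k$-sum converges absolutely by the first part.
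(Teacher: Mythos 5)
Your proof is correct. Note, however, that the paper itself offers no proof of this statement: it is quoted verbatim as a preliminary from Horn and Johnson (Theorem 6.2.8 of \emph{Topics in Matrix Analysis}), so there is no internal argument to compare against. What you have written is essentially the standard textbook proof of that cited result: Gelfand's formula plus comparison with the scalar series $\sum_k|\a_k|(\rho(M)+\e)^k$ for absolute convergence, then reduction via $M=SJS^{-1}$ to a single Jordan block $J_0=\l\Id+N$, binomial expansion, and identification of $\sum_k\a_k\binom{k}{j}\l^{k-j}$ with $\tfrac{1}{j!}f^{(j)}(\l)$ through term-by-term differentiation, which matches the upper-triangular Toeplitz block $f(J_{\l,k})$ of Definition~\ref{matf} exactly. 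The two points that usually need care are both handled: the interchange of the $k$- and $j$-sums is legitimate because the $j$-sum is finite and each inner $k$-series converges absolutely (the differentiated series keeps the radius $r$ and $|\l|\le\rho(M)<r$), and the primary matrix function is well defined here because analyticity of $f$ on the open disc gives all the required derivatives at every eigenvalue.
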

\begin{thm}[{\cite[Theorem 6.2.9]{HJ}}]\label{gft} Let $M\in\cM_n(\bC)$ with minimal polynomial $P\in\bC[x]$ and let $\l_1,\dots,\l_j$ be the (different) roots of $P$ with respective multiplicities $\mu_1,\dots,\mu_j$. Let $f$ and $g$ be scalar-valued ($\bR$ or $\bC$) functions such that the spectrum of $M$ is contained in the interior of their domains, being $f$ and $g$ $\mu_k-1$ times differentiable at $\l_k$ for $k=1,\dots,j$. Then
\begin{enumerate}
\item There exists $P\in\bC[x]$ of degree less or equal than $n-1$ such that $f(M)=P(M)$. $P$ may be taken to be any polynomial that interpolates $f$ and its derivatives at the roots of the minimal polynomial of $M$ (according to multiplicity).
\item $g(M)=f(M)$ if and only if $g^{(r)}(\l_k)=f^{(r)}(\l_k)$ for $r=0,\dots,\mu_{k}-1$, $k=0,\dots, j$.
\item The eigenvalues of $f(M)$ are $f(\l_1),\dots, f(\l_j)$, taking into account the multiplicity. The shape of the Jordan boxes of $M$ is preserved through $f$, but changing the eigenvalues to those of $f(M)$.
\end{enumerate}
\end{thm}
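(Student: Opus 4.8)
The plan is to reduce every assertion to the block formula of Definition~\ref{matf}, exploiting that a primary matrix function is evaluated one Jordan block at a time. Write $M=SJS^{-1}$, and recall that each block value $f(J_{\lambda,k})$ is the upper-triangular Toeplitz matrix whose successive diagonals carry $f(\lambda),f'(\lambda),\tfrac12 f''(\lambda),\dots$; in particular, when the block has size $m$, the matrix $f(J_{\lambda,k})$ depends only on $f(\lambda),\dots,f^{(m-1)}(\lambda)$. The one external fact I would invoke is the standard correspondence between the minimal polynomial and the Jordan form: the multiplicity $\mu_k$ of $\lambda_k$ as a root of the minimal polynomial equals the size of the \emph{largest} Jordan block of $M$ with eigenvalue $\lambda_k$. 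This is the hinge that converts ``agreement block by block'' into ``agreement of derivatives up to order $\mu_k-1$''.

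I would prove part (2) first, as it underlies the rest. Since $f(M)=Sf(J)S^{-1}$ and conjugation is injective, $f(M)=g(M)$ holds iff $f(J_{\lambda_i,k})=g(J_{\lambda_i,k})$ for every block. By the block formula, two blocks of size $m$ at the same eigenvalue $\lambda$ coincide exactly when $f^{(r)}(\lambda)=g^{(r)}(\lambda)$ for $r=0,\dots,m-1$, these being precisely the entries populating the distinct diagonals. Ranging over all blocks at $\lambda_k$ and using that the largest has size $\mu_k$, the whole system of block conditions collapses to $f^{(r)}(\lambda_k)=g^{(r)}(\lambda_k)$ for $r=0,\dots,\mu_k-1$; conversely these equalities force every smaller block to match. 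This is the stated equivalence.

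For part (1), I would take $P\in\bC[x]$ to be the Hermite interpolant determined by $P^{(r)}(\lambda_k)=f^{(r)}(\lambda_k)$ for $r=0,\dots,\mu_k-1$, $k=1,\dots,j$. The number of interpolation conditions is $\sum_k\mu_k=\deg(\text{min.\ poly.})\le n$, so the unique such $P$ has degree at most $n-1$. The crucial remark is that for a \emph{polynomial} stem function the primary matrix function agrees with ordinary substitution: expanding $P(\lambda\Id+N)=\sum_r \tfrac{P^{(r)}(\lambda)}{r!}N^r$ on a nilpotent $N$ reproduces exactly the block formula. Hence the ordinary matrix $P(M)$ equals the primary matrix function attached to the stem function $P$, and part~(2) applied to $f$ and $P$ gives $f(M)=P(M)$.

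Part (3) is then read off the block-diagonal form: each $f(J_{\lambda,k})$ is upper triangular with constant diagonal $f(\lambda)$, so the eigenvalues of $f(M)$ are the $f(\lambda_i)$, with algebraic multiplicities inherited from the block sizes. To recover the Jordan shape I would examine the nilpotent part $f(J_{\lambda,k})-f(\lambda)\Id$: its first superdiagonal is $f'(\lambda)$, so when $f'(\lambda)\ne0$ this part has nilpotency index exactly $m$, forcing $f(J_{\lambda,k})$ to be similar to the single block $J_{f(\lambda),k}$ of the same size. The main obstacle I anticipate is exactly here: the preservation of Jordan shapes holds only under $f'(\lambda_k)\ne0$, and the honest argument must either carry that hypothesis or describe how a block of size $m$ can split when $f'(\lambda_k)=0$, the splitting being controlled by the first nonvanishing derivative of $f$ at $\lambda_k$. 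Everything else is bookkeeping on top of the two facts in the first paragraph.
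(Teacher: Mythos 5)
Your proposal cannot be checked against a proof in the paper, because the paper offers none: Theorem~\ref{gft} is quoted from Horn and Johnson and used as a black box. Judged on its own merits, your block-by-block argument is correct and is essentially the standard one. For part (2), the reduction to equality of the upper-triangular Toeplitz blocks is right, and the hinge is exactly the fact you isolate: $\mu_k$ equals the size of the largest Jordan block of $M$ at $\l_k$. For part (1), the Hermite interpolant has degree at most $\sum_k\mu_k-1\le n-1$, and your expansion $P(\l\Id+N)=\sum_r\frac{P^{(r)}(\l)}{r!}N^r$ on the nilpotent part is precisely what identifies ordinary polynomial substitution with the primary matrix function, after which part (2) closes the argument. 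The eigenvalue half of part (3) is read off the triangular form, as you say.

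Your reservation about the second half of part (3) is not a gap in your proof but a genuine flaw in the statement as transcribed in the paper: without the hypothesis $f'(\l_k)\ne 0$ the Jordan shape is \emph{not} preserved. Take $f(t)=t^2$ and $M$ the $2\times 2$ nilpotent Jordan block; then $f(M)=M^2=0$, which splits into two $1\times 1$ blocks. Your repair is the correct one: writing $J_{\l,k}=\l\Id+N$ with $N$ nilpotent of index $m$, the condition $f'(\l)\ne0$ gives $\left(f(J_{\l,k})-f(\l)\Id\right)^{m-1}=f'(\l)^{m-1}N^{m-1}\ne0$, so $f(J_{\l,k})$ is a single block of the same size. (In Horn--Johnson this is a separate result, Theorem~6.2.25, which carries the hypothesis $f'(\l)\ne0$; it is not part of Theorem~6.2.9.) Fortunately the paper is not damaged by the imprecision: Propositions~\ref{prosq} and~\ref{prosq2} invoke only parts (1)--(2), and the corollary following Lemma~\ref{lempha} uses only the eigenvalue statement $\sp(\cosh M)=\cosh(\sp(M))$, never the claim about Jordan shapes.
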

\par The following result provides a square root of an invertible matrix with desirable commutativity properties.
\begin{pro}\label{prosq} Let $M,N\in\cM_n(\bC)$ such that $M$ is invertible and $M$ and $N$ commute. Then, if $\l_1,\dots,\l_j$ are roots of the minimal polynomial of $M$ with respective multiplicities $\mu_1,\dots,\mu_j$, there exist $P\in\bC[x]$ such that

\begin{enumerate}
\item $P(M)N=NP(M)$,
\item $P(M)^2=M$.
\end{enumerate}
\end{pro}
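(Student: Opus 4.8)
The plan is to obtain the square root as a \emph{polynomial} in $M$, so that the commutation property (1) is automatic, and to read off (2) from the interpolation description in Theorem~\ref{gft}. The role of $N$ is thus purely incidental: what really must be proved is that an invertible $M$ admits a square root lying in the algebra $\bC[M]$.

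First I would record that, since $M$ is invertible, none of the eigenvalues $\l_1,\dots,\l_j$ is $0$. Hence on a sufficiently small disk around each $\l_k$ one may choose a branch of the complex square root, and these local choices assemble into a single stem function $f$, defined on a neighborhood of the spectrum of $M$, which is holomorphic (in particular $\mu_k-1$ times differentiable) at every $\l_k$ and satisfies $f(z)^2=z$ identically near each eigenvalue. By Theorem~\ref{gft}(1) there is then a polynomial $P\in\bC[x]$ of degree at most $n-1$ that interpolates $f$ together with its derivatives at the roots of the minimal polynomial of $M$ (according to multiplicity), and for this $P$ one has $f(M)=P(M)$.

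Property (1) is now immediate: $P(M)$ is a polynomial in $M$, and since $MN=NM$ we get $M^iN=NM^i$ for every $i$, whence $P(M)N=NP(M)$. For property (2) I would use that evaluation of polynomials at $M$ is an algebra homomorphism, so that $P(M)^2=(P^2)(M)$, and then compare the stem function $P^2$ with the identity stem function $g(z)=z$, whose primary matrix function is $M$ itself. Because $P$ agrees with $f$ up to its first $\mu_k-1$ derivatives at each $\l_k$, the Leibniz rule shows that $P^2$ agrees with $f^2$ to the same order there; and $f^2=g$ near each $\l_k$, so $P^2$ and $g$ share all derivatives up to order $\mu_k-1$ at every eigenvalue. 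Theorem~\ref{gft}(2) then yields $(P^2)(M)=g(M)=M$, i.e.\ $P(M)^2=M$.

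The one genuinely delicate point is the construction of $f$: there is in general no single-valued analytic square root on a connected neighborhood of the entire spectrum, since the $\l_k$ may straddle a branch cut. The resolution is to build $f$ locally, one branch per eigenvalue, which is legitimate precisely because the primary matrix function depends only on the germ of $f$ at each $\l_k$ (Definition~\ref{matf}). Everything else is a routine verification within the matrix-function calculus recalled in Section~\ref{sec3}.
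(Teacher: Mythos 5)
Your proof is correct and follows essentially the same route as the paper: both obtain $P$ from Theorem~\ref{gft}.1 as the interpolating polynomial of a square-root stem function and then conclude via Theorem~\ref{gft}.2 applied to $f=P^2$ and $g(z)=z$. If anything, your handling of the branch choice is \emph{more} careful than the paper's, which takes $h(t)=\sqrt{t}$ to be the principal branch and asserts it is differentiable on $\bC\setminus\{0\}$ --- false on the negative real axis, where eigenvalues of $M$ may well lie --- whereas your one-local-branch-per-eigenvalue construction legitimately sidesteps this, since the primary matrix function depends only on the germs of the stem function at the eigenvalues.
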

\begin{proof}1. This is straightforward from the facts that $P(M)$ is a polynomial on $M$ and that $M$ and $N$ commute.\par
2. Define $h(t)=\sqrt{t}$ as the principal branch of the square root in $\bC$. $h$ is differentiable in $\bC\backslash\{0\}$, so it is clear that $h$ is well defined and $\mu_k-1$ differentiable at $\l_k\ne0$, for $k=0,\dots,j$.

Let $P$ be the interpolating polynomial defined in Theorem~\ref{gft}.1 for the function $h$, that is, the one such that $h^{(r)}(\l_k)=P^{(r)}(\l_k)$ for $r=0,\dots,\mu_{k}-1$, $k=0,\dots, j$. Now we apply Theorem~\ref{gft}.2 to the functions $f(t)=P(t)^2$ and $g(t)=t$.
We have that \[f^{(r)}(\l_k)=(P^2)^{(r)}(\l_k)=(h^2)^{(r)}(\l_k)=g^{(r)}(\l_k),\] for $r=0,\dots,\mu_{k}-1$, $k=0,\dots, j$, which ends the result.

\end{proof}
\begin{rem}The square root provided in Proposition \ref{prosq} is invertible since $M$ is invertible. This is because $|M|=|P(M)^2|=|P(M)|^2\ne 0$.
\end{rem}
We can prove in an analogous fashion, taking the principal branch of the logarithm in $\bC$, the following proposition.
\begin{pro}\label{prosq2} Let $M,N\in\cM_n(\bC)$ be such that $M$ is invertible and $M$ and $N$ commute. Then, if $\l_1,\dots,\l_j$ are the roots of the minimal polynomial of $M$ with respective multiplicities $\mu_1,\dots,\mu_j$, there exists $P\in\bC[x]$ such that
\begin{enumerate}
\item $P(M)N=NP(M)$,
\item $e^{P(M)}=M$.
\end{enumerate}
\end{pro}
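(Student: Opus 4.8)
The plan is to mirror the proof of Proposition~\ref{prosq} line by line, replacing the square root $h(t)=\sqrt t$ by a logarithm and the squaring map by the exponential. Part~1 is identical and immediate: the matrix $P(M)$ we shall produce is by construction a polynomial in $M$, and since $M$ commutes with $N$ by hypothesis, so does every polynomial in $M$; hence $P(M)N=NP(M)$.

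For Part~2, first I would fix a branch $h$ of the logarithm holomorphic on a neighbourhood of the spectrum of $M$. Because $M$ is invertible we have $0\notin\{\l_1,\dots,\l_j\}$, so such a branch exists (the principal branch works, unless some $\l_k$ lies on the cut $(-\infty,0]$, in which case one rotates the cut to avoid the finitely many eigenvalues); in particular $h$ is infinitely, hence $\mu_k-1$ times, differentiable at each $\l_k$. I then take $P\in\bC[x]$ to be the Hermite interpolating polynomial furnished by Theorem~\ref{gft}.1 for $h$, namely the one with $P^{(r)}(\l_k)=h^{(r)}(\l_k)$ for $r=0,\dots,\mu_k-1$ and $k=1,\dots,j$. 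By Theorem~\ref{gft}.1 this gives $P(M)=h(M)$, the primary matrix logarithm of $M$.

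Next I would apply Theorem~\ref{gft}.2 to the stem functions $f(t)=e^{P(t)}$ and $g(t)=t$. Since $P$ agrees with $h$ together with its first $\mu_k-1$ derivatives at each $\l_k$ and $\exp$ is entire, the chain rule (more precisely Fa\`a di Bruno's formula for higher derivatives) shows that $e^{P(t)}$ and $e^{h(t)}$ share all derivatives up to order $\mu_k-1$ at $\l_k$; as $e^{h(t)}=t=g(t)$ identically, we obtain $f^{(r)}(\l_k)=g^{(r)}(\l_k)$ for $r=0,\dots,\mu_k-1$ and every $k$. Theorem~\ref{gft}.2 then yields $f(M)=g(M)=M$.

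It remains to identify $f(M)$ with $e^{P(M)}$, and this is the one place where the argument is genuinely harder than in the square-root case. There $f(t)=P(t)^2$ was a polynomial, so $f(M)=P(M)^2$ held by the very definition of polynomial evaluation; here $f=\exp\circ P$ is transcendental. I would close the gap with the composition rule for primary matrix functions: for a polynomial $P$ and an entire function such as $\exp$ one has $(\exp\circ P)(M)=\exp(P(M))$, the right-hand side being the convergent matrix power series $\sum_{k\ge0}P(M)^k/k!$ guaranteed by \cite[Theorem 6.2.8]{HJ} applied to the matrix $P(M)$, whose spectrum is finite. Combining this with $f(M)=M$ gives $e^{P(M)}=M$, completing the proof. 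The main obstacle, then, is not any computation but this composition/identification step, which must be invoked explicitly precisely because the outer function is no longer a polynomial.
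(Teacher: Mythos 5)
Your proof is correct and follows exactly the route the paper intends: the paper gives no standalone proof of this proposition, merely declaring it provable ``in an analogous fashion'' to Proposition~\ref{prosq} with the principal branch of the logarithm, and your interpolation-plus-Theorem~\ref{gft}.2 argument is precisely that analogue. Your two additions --- handling eigenvalues on the branch cut, and explicitly invoking the composition rule to identify the primary matrix function $(\exp\circ P)(M)$ with the power-series exponential $e^{P(M)}$, a step that is genuinely needed here because, unlike $P(t)^2$ in the square-root case, the stem function $e^{P(t)}$ is not a polynomial --- fill in exactly what the paper's one-line proof glosses over.
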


Now we state some results concerning block matrices. Consider $M_1,\dots,M_4\in\cM_n(\bC)$ and consider the block matrix
  \[\bm M=\(\begin{array}{c|c}
   M_1 & M_2 \\ \hline
   M_3 & M_4
   \end{array}\).\]
 \begin{lem}\label{lemdetmat}\ 
 
 \begin{enumerate}
\item If $M_1$ and $M_3$ commute, $|\bm M|=|M_1M_4-M_3M_2|$.
\item If $M_3=-M_2$ and $M_4=-M_1$, $|\bm M|=|M_1+M_2||M_2-M_1|$.
\end{enumerate}
 \end{lem}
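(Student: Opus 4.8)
The plan is to handle the two parts by different standard block-matrix techniques; neither needs the Jordan machinery of Section~\ref{sec3}. For Part~1, I would first assume $M_1$ invertible and then drop that hypothesis by perturbation. When $M_1$ is invertible I factor $\bm M$ through its Schur complement,
\[\bm M=\(\begin{array}{c|c} \Id & 0 \\ \hline M_3M_1^{-1} & \Id\end{array}\)\(\begin{array}{c|c} M_1 & M_2 \\ \hline 0 & M_4-M_3M_1^{-1}M_2\end{array}\),\]
where the first factor is block lower-triangular with identity diagonal blocks, so it has determinant $1$; hence $|\bm M|=|M_1|\,|M_4-M_3M_1^{-1}M_2|$. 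Now commutativity of $M_1$ and $M_3$ gives $M_3M_1^{-1}=M_1^{-1}M_3$, so $M_1M_3M_1^{-1}=M_3$, and multiplying the Schur complement on the left by $M_1$ inside a single determinant yields
\[|M_1|\,|M_4-M_3M_1^{-1}M_2|=|M_1M_4-M_1M_3M_1^{-1}M_2|=|M_1M_4-M_3M_2|.\]

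To reach the case where $M_1$ need not be invertible I would replace $M_1$ by $M_1+x\Id$: this still commutes with $M_3$, it is invertible for all but the finitely many $x$ equal to minus an eigenvalue of $M_1$, and both $\left|\(\begin{smallmatrix} M_1+x\Id & M_2\\ M_3 & M_4\end{smallmatrix}\)\right|$ and $|(M_1+x\Id)M_4-M_3M_2|$ are polynomials in $x$. They agree off a finite set, hence identically, and evaluating at $x=0$ finishes Part~1. The only point requiring care is the identity $M_1M_3M_1^{-1}=M_3$, which is exactly where the commutativity hypothesis is used.

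For Part~2 no commutativity is available, so I would instead reduce $\bm M=\(\begin{array}{c|c} M_1 & M_2\\ \hline -M_2 & -M_1\end{array}\)$ to block-triangular form by determinant-preserving block operations. Adding the second block row to the first (left multiplication by $\(\begin{array}{c|c}\Id & \Id\\ \hline 0 & \Id\end{array}\)$, of determinant $1$) produces $\(\begin{array}{c|c} M_1-M_2 & M_2-M_1\\ \hline -M_2 & -M_1\end{array}\)$, and then adding the first block column to the second (right multiplication by the same unipotent matrix) clears the upper-right block, giving
\[\(\begin{array}{c|c} M_1-M_2 & 0\\ \hline -M_2 & -(M_1+M_2)\end{array}\).\]
Since both operations preserve the determinant and a block lower-triangular determinant factorises, $|\bm M|=|M_1-M_2|\,|{-(M_1+M_2)}|=(-1)^n|M_1-M_2|\,|M_1+M_2|$, and using $|M_2-M_1|=(-1)^n|M_1-M_2|$ this equals $|M_1+M_2|\,|M_2-M_1|$, as claimed.

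I expect the genuine content to sit in Part~1: the Schur-complement factorization together with the commutativity identity $M_1M_3M_1^{-1}=M_3$ is the heart of the matter, and the non-invertible case is routine once the polynomial-perturbation idea is in place. Part~2 is purely computational, the only delicacy being the factors of $(-1)^n$ that arise from scaling an $n\times n$ block by $-1$.
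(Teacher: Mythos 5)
Your proof is correct, but it is more self-contained than the paper's. For Part~1 the paper does not give an argument at all: it simply cites Silvester's theorem on determinants of block matrices with commuting blocks. You instead prove that result from scratch, via the Schur-complement factorization
\[\bm M=\(\begin{array}{c|c} \Id & 0 \\ \hline M_3M_1^{-1} & \Id\end{array}\)\(\begin{array}{c|c} M_1 & M_2 \\ \hline 0 & M_4-M_3M_1^{-1}M_2\end{array}\),\]
the identity $M_1M_3M_1^{-1}=M_3$, and the polynomial-perturbation argument ($M_1\mapsto M_1+x\Id$) to remove the invertibility assumption; this is essentially the standard proof of Silvester's result and all its steps check out, so what it buys is independence from the external reference at the cost of length. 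For Part~2 both you and the paper use unipotent block row/column operations; the paper subtracts the second block row from the first and then the first block column from the second, landing directly on the diagonal blocks $M_1+M_2$ and $M_2-M_1$ with no sign to track, whereas your additive variant produces $-(M_1+M_2)$ in the corner and forces the $(-1)^n$ bookkeeping, which you resolve correctly via $|M_2-M_1|=(-1)^n|M_1-M_2|$. The two computations are equivalent; the paper's choice of operations is just slightly cleaner.
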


\begin{proof} Statement 1  appears in \cite[Theorem 3]{Silvester}. In order to prove 2 we use the basic properties of the determinant, namely, that if we add to some row (column) a linear combination of the rest, the determinant does not vary. Hence, we have that
\[\left|\begin{array}{c|c}
M_1 & M_2 \\ \hline
-M_2 & -M_1
\end{array}\right|=\left|\begin{array}{c|c}
M_1+M_2 & M_2+M_1 \\ \hline
-M_2 & -M_1
\end{array}\right|=\left|\begin{array}{c|c}
M_1+M_2 &0\\ \hline
-M_2 & -M_1+M_2
\end{array}\right|=|M_1+M_2||M_2-M_1|.\]
\end{proof}
\section{Systems of linear equations with reflection}\label{sec4}
In this section we will consider the homogeneous system of linear equations
\begin{equation}\label{hlsystem}Hu(t):=Fu'(t)+Gu'(-t)+A u(t)+Bu(-t)=0, t\in\bR,
\end{equation}
where $n\in\bN$, $A,B,F,G\in\cM_n(\bR)$ and $u:\bR\to\bR^n$. We will prove that a fundamental matrix for problem~\eqref{hlsystem} exists.
\begin{dfn}
$X\in\cC\(\bR,\cM_n(\bR)\)$ is a \textit{fundamental matrix} of problem~\eqref{hlsystem} if it satisfies the following properties:
\begin{itemize}
\item[(H1)]$X$ is differentiable,
\item[(H2)]$X$ satisfies equation~\eqref{hlsystem}, that is
\begin{equation*}
FX'(t)+GX'(-t)+A X(t)+BX(-t)=0, t\in\bR,
\end{equation*}
\item[(H3)] $X(0)$ is invertible,
\item[(H4)] the columns of $X$ generate a basis of the space of solutions of~\eqref{hlsystem}.
\end{itemize}\par
\end{dfn}
In order to prove the existence of a fundamental matrix we will have to find a reduced equation for problem~\eqref{hlsystem} --cf. Theorem~\ref{Rop2}-- and  rewrite Theorem~\ref{strthm} in terms of a system of equations.
\subsection{Reducing the system}

Consider the notation $\ol u(t):=u(-t),\ t\in\bR$ for every function $u:\bR\to\bR^n$\footnote{There will be no mention to the complex conjugate (an involution as well) in this paper, so this notation will not cause any confusion.}. This way, equation~\eqref{hlsystem} can be expressed as $Fu'+G\ol{ u'}+Au+B\ol u=0$. Also, $H=FD+G\phi^*D+A+B\phi^*$.\par  Now Define $v=\ol u$. Then, we can rewrite equation \eqref{hlsystem} as
\[Fu'-Gv'+Au+Bv=0.\]
Evaluating this expression in $-t$ we get
\[-Fv'+Gu'+Av+Bu=0.\]
Expressing these two equations as a system, we have
\begin{equation}\label{equv}\(\begin{array}{c|c}
F & -G\\ \hline
G & -F
\end{array}\)\(\begin{array}{c}
u' \\ \hline v'
\end{array}\)+\(\begin{array}{c|c}
A & B\\ \hline
B & A
\end{array}\)\(\begin{array}{c}
u \\ \hline v
\end{array}\)=0.\end{equation}
\begin{lem}\label{lemuos} If $F+G$ and $F-G$ are invertible, equation \eqref{hlsystem} coupled with the condition $u(0)=c$ for some $c\in\bR^n$ has, at most, one solution.
\end{lem}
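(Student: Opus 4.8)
The plan is to convert the reflection equation \eqref{hlsystem}, which couples the values of $u$ at $t$ and $-t$, into a genuine first-order ODE system in the doubled unknown $(u,v)$ with $v=\ol u$, and then invoke the classical uniqueness theorem for linear systems. The passage from \eqref{hlsystem} to \eqref{equv} has already been carried out above, so the remaining work is to solve \eqref{equv} for the derivative, to identify the initial data, and to transfer uniqueness back to $u$.

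First I would verify that the leading block matrix
\[\bm M=\(\begin{array}{c|c} F & -G\\ \hline G & -F\end{array}\)\]
is invertible. It has exactly the shape $M_3=-M_2$, $M_4=-M_1$ required by Lemma~\ref{lemdetmat}.2 (taking $M_1=F$ and $M_2=-G$), which gives $|\bm M|=|F-G|\,|-F-G|$. Here $F-G$ is invertible by hypothesis, and $-F-G=-(F+G)$ is invertible because $F+G$ is; hence both factors are nonzero and $\bm M^{-1}$ exists. This is the one place where the assumptions on $F\pm G$ are used, and I expect it to be the only real content of the argument.

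With $\bm M$ invertible, \eqref{equv} is equivalent to the constant-coefficient linear system
\[\(\begin{array}{c} u' \\ \hline v'\end{array}\)=-\bm M^{-1}\(\begin{array}{c|c} A & B\\ \hline B & A\end{array}\)\(\begin{array}{c} u \\ \hline v\end{array}\),\]
to which the standard existence and uniqueness theorem for linear ODE systems applies: a solution is completely determined by its value at $t=0$. The initial value is forced by the data of the problem, since the condition $u(0)=c$ together with $v(0)=\ol u(0)=u(0)=c$ fixes $(u(0),v(0))=(c,c)$.

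Finally I would run the uniqueness argument itself. If $u_1$ and $u_2$ both solve \eqref{hlsystem} with $u_i(0)=c$, then $(u_1,\ol{u_1})$ and $(u_2,\ol{u_2})$ both solve the displayed ODE system with the same initial datum $(c,c)$; uniqueness for that system forces $(u_1,\ol{u_1})=(u_2,\ol{u_2})$, whence $u_1=u_2$. Note that this reasoning delivers only uniqueness and not existence—consistent with the ``at most one solution'' in the statement—because a solution of the doubled system need not a priori satisfy the constraint $v=\ol u$; existence is obtained separately through the explicit construction of a fundamental matrix.
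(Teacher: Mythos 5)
Your proposal is correct and follows essentially the same route as the paper: invoke Lemma~\ref{lemdetmat}.2 to see that the block matrix in \eqref{equv} is invertible (using exactly the invertibility of $F+G$ and $F-G$), rewrite \eqref{equv} as the explicit system \eqref{equv2} with initial datum $(c,c)$, and conclude by uniqueness for linear ODE systems. Your final paragraph merely makes explicit the transfer of uniqueness back to $u$ (and the reason only ``at most one'' is claimed), which the paper leaves implicit.
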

\begin{proof} By Lemma \ref{lemdetmat}.2 we have that
\[\left|\begin{array}{c|c}
F & -G\\ \hline
G & -F
\end{array}\right|=|F-G||-G-F|=(-1)^n|F+G||F-G|\ne0,\]
so equation \eqref{equv} can be rewritten as
\begin{equation}\label{equv2}\(\begin{array}{c}
u' \\ \hline v'
\end{array}\)=-\(\begin{array}{c|c}
F & -G\\ \hline
G & -F
\end{array}\)^{-1}\(\begin{array}{c|c}
A & B\\ \hline
B & A
\end{array}\)\(\begin{array}{c}
u \\ \hline v
\end{array}\).\end{equation}
 If $u(0)=c$, we have that $v(0)=c$. On the other hand, equation \eqref{equv2} coupled with these initial conditions has exactly one solution due to the theory of ordinary differential systems.
\end{proof}
\begin{cor}\label{fmu}
If $F+G$ and $F-G$ are invertible and $X$, $Y\in\cC\(\bR,\cM_n(\bR)\)$ are matrices satisfying (H1)--(H3), then there exists an invertible matrix $\Lambda\in\cM_n(\bR)$ such that $Y=X\Lambda$. Furthermore, $X$ and $Y$ satisfy (H4).
\end{cor}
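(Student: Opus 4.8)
The plan is to extract everything from the uniqueness assertion of Lemma~\ref{lemuos}: because $F+G$ and $F-G$ are invertible, a solution of~\eqref{hlsystem} is completely determined by its value at $0$. First I would read (H2) column-wise, observing that it says precisely that each column of $X$, and each column of $Y$, is a solution of~\eqref{hlsystem}. Let $\cS$ denote the real vector space of all solutions $u\colon\bR\to\bR^n$ of~\eqref{hlsystem}, and consider the linear evaluation map $\mathrm{ev}_0\colon\cS\to\bR^n$, $u\mapsto u(0)$. This map is injective: if $u(0)=0$, then $u$ and the zero solution share the same value at $0$, so Lemma~\ref{lemuos} forces $u\equiv 0$. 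In particular $\dim\cS\le n$, which is the bound I will use at the end.

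For the factorization I would set $\Lambda:=X(0)^{-1}Y(0)$, which lies in $\cM_n(\bR)$ and is invertible by (H3), since both $X(0)$ and $Y(0)$ are invertible. I then test the auxiliary matrix $Z:=X\Lambda$. As $\Lambda$ is constant it commutes with differentiation and with the substitution $t\mapsto -t$, so $FZ'(t)+GZ'(-t)+AZ(t)+BZ(-t)=\left(FX'(t)+GX'(-t)+AX(t)+BX(-t)\right)\Lambda=0$ by (H2); hence every column of $Z$ is a solution of~\eqref{hlsystem}. Moreover $Z(0)=X(0)\Lambda=Y(0)$. Comparing the $i$-th columns of $Z$ and of $Y$, both are solutions of~\eqref{hlsystem} taking the same value at $0$, so Lemma~\ref{lemuos} forces them to coincide for each $i$. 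Therefore $Z=Y$, that is $Y=X\Lambda$ with $\Lambda$ invertible, which is the first assertion.

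For the ``furthermore'' part I would close with a dimension count. The columns of $X$ are $n$ elements of $\cS$ whose images under $\mathrm{ev}_0$ are exactly the columns of $X(0)$; these are linearly independent because $X(0)$ is invertible, so by injectivity of $\mathrm{ev}_0$ the columns of $X$ are themselves linearly independent in $\cS$, giving $\dim\cS\ge n$. Combined with $\dim\cS\le n$ from the first paragraph, this yields $\dim\cS=n$, so the $n$ independent columns of $X$ form a basis of $\cS$; this is exactly (H4). Since $Y$ also satisfies (H1)--(H3), the identical argument shows $Y$ satisfies (H4) too.

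I do not expect a genuine obstacle, as the essential content is already packaged in Lemma~\ref{lemuos}; the one point that deserves care is that the lemma supplies only \emph{uniqueness}, so the lower bound $\dim\cS\ge n$ cannot come from an abstract existence theorem but must be read off from the \emph{assumed} existence of a matrix $X$ satisfying (H3). Everything else is bookkeeping with constant matrices and the injectivity of $\mathrm{ev}_0$.
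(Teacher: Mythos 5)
Your proof is correct and follows essentially the same route as the paper's: both hinge entirely on the uniqueness statement of Lemma~\ref{lemuos}, take $\Lambda=X(0)^{-1}Y(0)$, and deduce $Y=X\Lambda$ by comparing two solutions that agree at $0$; your dimension count via the evaluation map $\mathrm{ev}_0$ is just a mild repackaging of the paper's observation that every solution $v$ must equal $X(t)X(0)^{-1}v(0)$. Nothing is missing.
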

\begin{proof}Let $c\in\bR^n$ be any vector and define
\[u(t)=(Y(t)Y(0)^{-1}-X(t)X(0)^{-1})c,\ t\in\bR.\] By construction, $u$ is a solution of problem
\begin{equation}\label{eqic} Hu=0,\ u(0)=0,
\end{equation}
 so, by Lemma \ref{lemuos}, $u=0$. Since $v$ was fixed arbitrarily, \[Y(t)Y(0)^{-1}=X(t)X(0)^{-1},\] so the statement of the Corollary holds for $\Lambda=X(0)^{-1}Y(0)$.
\par
Now, take a solution $v$ of problem \eqref{eqic} and consider the problem \eqref{eqic} coupled with the condition $u(0)=v(0)$. By Lemma \ref{lemuos}, $v$ is the unique solution to this problem. On the other hand, $X(t)X(0)^{-1}v(0)$ is a solution of the problem as well, so $v(t)=X(t)X(0)^{-1}v(0)$ and therefore $X$ generates all of the solutions of problem \eqref{eqic}. The same happens for $Y$.
\end{proof}
\begin{rem}\label{remsufc}Corollary \ref{fmu} establishes a sufficient condition ($F+G$ and $F-G$ invertible) for matrices satisfying (H1)--(H3) to be fundamental matrices of problem \eqref{eqic}.
\end{rem}
Lemma \ref{lemuos} establishes an upper bound for the number of solutions of our equation. But, is there a solution at all? Now we try to answer this question.

\subsection{Explicit computation of the fundamental matrix}
In this last part of the Section we move towards a more direct approach to the fundamental matrix of problem~\eqref{hlsystem}. Furthermore, we find a simpler explicit expression for it under certain assumptions.

\begin{thm}\label{thmexpfm}
Assume $F-G$ and $F+G$ are invertible. Then
\begin{equation}\label{Xseries}X(t): =     \sum_{k=0}^\infty\frac{E^k t^{2k}}{(2k)!}  -(F+G)^{-1}(A+B)\sum_{k=0}^\infty\frac{E^k t^{2k+1}}{(2k+1)!},\end{equation}
where $E=(F-G)^{-1}(A-B)(F+G)^{-1}(A+B)$, is a fundamental matrix of problem \eqref{hlsystem}. If we further assume $A-B$ and $A+B$ are invertible, then $E$ is invertible and we can consider $\Omega$ to be the root of $E$ constructed in Proposition \ref{prosq}. Then,
\begin{equation}\label{fme}X(t)=\cosh \Omega t -(F+G)^{-1}(A+B)\Omega^{-1}\sinh\Omega t.\end{equation}
\end{thm}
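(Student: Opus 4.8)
The plan is to verify directly that the series $X(t)$ in \eqref{Xseries} satisfies the four defining properties (H1)--(H4) of a fundamental matrix, and then to rewrite it in closed form using the matrix functions developed in Section \ref{sec3}. First I would confirm that the series converges (so that (H1) holds and termwise differentiation is legitimate): since $E$ is a fixed matrix, each entry of $X(t)$ is an entire function of $t$ built from $\cosh$-type and $\sinh$-type series evaluated at $E$, so convergence is immediate and $X$ is smooth. Computing $X'(t)$ termwise gives
\begin{equation*}
X'(t)=\sum_{k=0}^\infty\frac{E^k t^{2k-1}}{(2k-1)!}-(F+G)^{-1}(A+B)\sum_{k=0}^\infty\frac{E^k t^{2k}}{(2k)!},
\end{equation*}
and the key structural observation is that replacing $t$ by $-t$ flips the sign only of the odd-power (second) series in $X$ and of the even-power (second) series in $X'$, because the first sums in $X$ and the second sum in $X'$ carry even powers of $t$ while the remaining sums carry odd powers.

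The heart of the verification is (H2). I would substitute $X(t),X(-t),X'(t),X'(-t)$ into $FX'(t)+GX'(-t)+AX(t)+BX(-t)$ and collect the coefficient of each power $t^j$, splitting into even and odd $j$. Using the parity just described, the even-power terms collect the contributions $F$ and $G$ with the \emph{same} sign and $A,B$ with the same sign, while the odd-power terms collect $F,G$ with \emph{opposite} signs. This is exactly why the combinations $F+G$, $F-G$, $A+B$ and $A-B$ appear. The plan is to show that the coefficient of each $t^j$ vanishes, which reduces to the single algebraic identity
\begin{equation*}
(F+G)E=(A-B)(F+G)^{-1}(A+B),
\end{equation*}
together with its companion obtained from $E=(F-G)^{-1}(A-B)(F+G)^{-1}(A+B)$; both are immediate from the definition of $E$ once one multiplies through by $(F-G)$ and $(F+G)$, which is legitimate precisely because $F\pm G$ are invertible by hypothesis. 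I expect this bookkeeping — keeping track of which of the four block-combinations multiplies each power of $t$, and checking the recursion telescopes — to be the main obstacle, since it is easy to misplace a sign when passing $t\mapsto -t$ through the operator.

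For (H3), evaluating at $t=0$ kills every positive power and leaves $X(0)=\Id$, which is trivially invertible; (H4) then follows from Corollary \ref{fmu}, whose hypotheses ($F+G$ and $F-G$ invertible) are exactly those assumed here. Finally, for the closed form \eqref{fme} under the extra hypotheses that $A\pm B$ are invertible, I would first note that $E$ is a product of four invertible matrices and hence invertible, so Proposition \ref{prosq} applies and produces $\Omega$ with $\Omega^2=E$ and $\Omega$ commuting with $E$ (hence with every power of $E$). Then I would invoke the power-series characterization of primary matrix functions (the theorem attributed to \cite[Theorem 6.2.8]{HJ}): since $\cosh$ and $\sinh$ are entire, one has
\begin{equation*}
\cosh\Omega t=\sum_{k=0}^\infty\frac{\Omega^{2k}t^{2k}}{(2k)!}=\sum_{k=0}^\infty\frac{E^k t^{2k}}{(2k)!},\qquad
\Omega^{-1}\sinh\Omega t=\sum_{k=0}^\infty\frac{\Omega^{2k}t^{2k+1}}{(2k+1)!}=\sum_{k=0}^\infty\frac{E^k t^{2k+1}}{(2k+1)!},
\end{equation*}
using $\Omega^{2k}=E^k$ and $\Omega^{2k+1}\Omega^{-1}=E^k$. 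Substituting these two identities into \eqref{Xseries} yields \eqref{fme} directly, and the commutativity clause of Proposition \ref{prosq} guarantees that $\Omega^{-1}$ may be pulled out of the odd series without disturbing the coefficient matrix $(F+G)^{-1}(A+B)$.
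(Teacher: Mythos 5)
Your overall strategy coincides with the paper's proof: verify (H1)--(H3) directly on the series, obtain (H4) from Corollary \ref{fmu} (the content of Remark \ref{remsufc}), and pass to the closed form \eqref{fme} via $\Omega^2=E$. The convergence argument, the evaluation $X(0)=\Id$, the observation that $E$ is invertible when $A\pm B$ are, and the identifications $\cosh\Omega t=\sum_k E^kt^{2k}/(2k)!$ and $\Omega^{-1}\sinh\Omega t=\sum_k E^kt^{2k+1}/(2k+1)!$ are all correct and essentially identical to what the paper does.

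However, the core verification of (H2) contains a sign error that, as written, makes the argument fail. Write $S_1(t)=\sum_k E^kt^{2k}/(2k)!$ (even) and $S_2(t)=\sum_k E^kt^{2k+1}/(2k+1)!$ (odd), so that $X=S_1-(F+G)^{-1}(A+B)S_2$ and $X'=E\,S_2-(F+G)^{-1}(A+B)S_1$. Under $t\mapsto-t$ it is the \emph{odd-power first} term $E\,S_2$ of $X'$ that changes sign, i.e.\ $X'(-t)=-E\,S_2(t)-(F+G)^{-1}(A+B)S_1(t)$; your text instead places the flip on the even-power second series of $X'$, contradicting the (correct) parity description in your own justifying clause. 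This faulty bookkeeping is exactly what produces your displayed key identity $(F+G)E=(A-B)(F+G)^{-1}(A+B)$, which is \emph{not} a consequence of the definition of $E$ and is false in general: when $A-B$ is invertible it forces $(F+G)(F-G)^{-1}=\Id$, i.e.\ $G=0$. The identity the proof actually needs, and which does follow by left-multiplying the definition of $E$ by $F-G$, is
\[
(F-G)E=(A-B)(F+G)^{-1}(A+B).
\]
With the correct parities the substitution collects as
\[
\left[(A+B)-(F+G)(F+G)^{-1}(A+B)\right]S_1(t)+\left[(F-G)E-(A-B)(F+G)^{-1}(A+B)\right]S_2(t),
\]
and both brackets vanish, the first trivially and the second by the identity above; with your parities the corresponding brackets would be $(A+B)-(F-G)(F+G)^{-1}(A+B)$ and $(F+G)E-(A-B)(F+G)^{-1}(A+B)$, neither of which is zero in general. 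The gap is thus localized and easily repaired, but the step as stated does not close.
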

\begin{proof}
 We observe that the series that defines $X$ is uniformly and absolutely convergent by the ratio test and, therefore, $X$ is an analytic function. Furthermore, $X(0)=\Id$ is invertible. \par
 Also, the relationship between formulas \eqref{Xseries} and \eqref{fme} is clear (assuming the complementary hypotheses): 
 \begin{align*}X(t)= &  \sum_{k=0}^\infty\frac{E^k t^{2k}}{(2k)!}  -(F+G)^{-1}(A+B)\sum_{k=0}^\infty\frac{E^k t^{2k+1}}{(2k+1)!}= \sum_{k=0}^\infty\frac{(\Omega t)^{2k}}{(2k)!}-(F+G)^{-1}(A+B)\Omega^{-1}\sum_{k=0}^\infty\frac{(\Omega t)^{2k+1}}{(2k+1)!} . \end{align*}
 Observe that, although $\Omega$ might not be a matrix with real coefficients, $X$ is a real matrix.\par
  We check now that $X(t)$ satisfies equation \eqref{hlsystem}. Define, for convenience,
  \[S_1(t):=\sum_{k=0}^\infty\frac{E^k t^{2k}}{(2k)!} ,\quad S_2(t):=\sum_{k=0}^\infty\frac{E^k t^{2k+1}}{(2k+1)!}.\]
  Observe that $S_1$ is even and $S_2$ is odd. Then,
\begin{alignat*}{2}X(t) &=     S_1(t)  -(F+G)^{-1}(A+B)S_2(t),\quad X'(t) &&=    E\,S_2(t)-(F+G)^{-1}(A+B)S_1(t), \\ X(-t) &=     S_1(t)  +(F+G)^{-1}(A+B)S_2(t),\quad X'(-t) &&=   -E\,S_2(t)-(F+G)^{-1}(A+B)S_1(t).\end{alignat*}

  \begin{align*} & FX'(t)+GX'(-t)+A X(t)+BX(-t)\\ = & 
F(E\,S_2(t)-(F+G)^{-1}(A+B)S_1(t))  +G( -E\,S_2(t)-(F+G)^{-1}(A+B)S_1(t))\\ & +A(S_1(t)  -(F+G)^{-1}(A+B)S_2(t))  +B(S_1(t)  +(F+G)^{-1}(A+B)S_2(t))
  \\ = & [-F(F+G)^{-1}(A+B)-G(F+G)^{-1}(A+B)+A+B]S_1(t) \\ &+[F\,E-G\,E-A(F+G)^{-1}(A+B)+B(F+G)^{-1}(A+B)]S_2(t)=0.
  \end{align*}
Hence $X$ satisfies (H1)--(H3) and, by Remark \ref{remsufc}, $X$ is a fundamental matrix.
\end{proof}
\begin{rem} Observe an important difference between the case of systems of ordinary differential equations and the case with reflection. While in the first case we know that fundamental matrices are invertible for every $t\in\bR$, in the case of the reflection, expression \eqref{Xseries} shows that, although $X$ is invertible in a neighborhood of $0$, it might not be so for other values of $t$. This goes in the line of \cite[Lemma 2.4]{Toj3}.
\end{rem}
\begin{rem} Observe that in the scalar case of $n=1$ with $F=1$, $G=0$, Theorem \ref{thmexpfm} provides the same expression for $X$ as the one computed in \cite[Section 2.2, Case (C2)]{Toj3}.
\end{rem}
\begin{rem} In the proof of Theorem \ref{thmexpfm} it is not actually needed the square root to be the one constructed in Proposition \ref{prosq}, it could be any other square root, but, as we will see in the results to come, to choose this one is specially useful because of its commutativity properties.
\end{rem}
\begin{exa}Consider the system of equations
\begin{equation*}\begin{aligned} u'(t) & =-v(t), \\ v'(-t), & =-u(-t),\end{aligned}
\end{equation*}
which expressed in matrix form is
\[\begin{pmatrix} 1 & 0 \\ 0 & 0
\end{pmatrix}\begin{pmatrix} u'(t) \\ v'(t) \end{pmatrix}+\begin{pmatrix} 0 & 0 \\ 0 & 1
\end{pmatrix}\begin{pmatrix} u'(-t) \\ v'(-t) \end{pmatrix}+\begin{pmatrix} 0 & 1 \\ 0 & 0
\end{pmatrix}\begin{pmatrix} u(t) \\ v(t) \end{pmatrix}+\begin{pmatrix} 0 & 0 \\ 1 & 0
\end{pmatrix}\begin{pmatrix} u(-t) \\ v(-t) \end{pmatrix}=0.\]
Hence,
\[(F-G)^{-1}(A-B)(F+G)^{-1}(A+B)=\begin{pmatrix} 1 & 0 \\ 0 & -1
\end{pmatrix}\begin{pmatrix} 0 & 1 \\ -1 & 0
\end{pmatrix}\begin{pmatrix} 1 & 0 \\ 0 & 1
\end{pmatrix}\begin{pmatrix} 0 & 1 \\ 1 & 0
\end{pmatrix}=\Id,\]
and, therefore,
\[X(t)=\begin{pmatrix} \cosh t & 0 \\ 0 & \cosh t
\end{pmatrix} -\begin{pmatrix} 0 & 1 \\ 1 & 0
\end{pmatrix}\begin{pmatrix} \sinh t & 0 \\ 0 & \sinh t
\end{pmatrix}=\begin{pmatrix} \cosh t & -\sinh t \\ -\sinh t & \cosh t
\end{pmatrix}.\]
$X(t)$ is invertible for $t\in\bR$ since $\cosh^2 t-\sinh^2 t=1$ for every $t\in\bR$.
\end{exa}
\subsubsection{Computation of the matrix $E$ for a differential equation}
Let us consider equation $Lu=0$ --where $L$ is defined as in \eqref{Lop}. We rewrite $Lu=0$ in terms of its coefficients 
\begin{displaymath}Lu(t)=\sum_{k=0}^na_ku^{(k)}(t)+\sum_{k=0}^nb_ku^{(k)}(-t)=0,\ t\in I,\end{displaymath}
and now make the change of variables $x_0=u$, $x_1=x_0',\dots,x_{n}=x_{n-1}'$.  We have that $Lu=0$ is equivalent to the system
\begin{equation}\label{fstsys}\begin{aligned}
x'_k -x_{k+1} & =0,\ k=0,\dots,n-2,\\
a_{n}x_{n-1}'+b_{n}\ol{x_{n-1}'}+\sum_{k=0}^{n-1}a_kx_k+\sum_{k=0}^{n-1}b_k\ol{x_k}& =0.
\end{aligned}\end{equation}
Hence, taking $x=(x_0,\dots,x_{n-1})$, and $A,B,F,G\in\cM_n(\bR)$ such that
\[F=\(\begin{array}{c|c}
\Id &\bm 0 \\ \hline
\bm 0 & a_{n}
\end{array}\),\ G=\(\begin{array}{c|c}
\bm 0 & \bm 0 \\ \hline
\bm 0 & b_{n}
\end{array}\)
 ,\ A=\begin{pmatrix}
0 & -1 & 0 &  \cdots & 0 & 0 \\ 
0 & 0 & -1 &  \cdots & 0 & 0 \\ 
\vdots &  \vdots & \vdots & \ddots & \vdots & \vdots \\ 
0 & 0 & 0 &  \cdots & -1 & 0 \\ 
0 & 0 & 0 &  \cdots & 0 & -1 \\ 
a_0 & a_1 & a_2 &  \cdots & a_{n-2} & a_{n-1}
\end{pmatrix},\ B=\begin{pmatrix}
0 & \cdots & 0 \\ 
\vdots & \ddots & \vdots \\ 
0 & \cdots & 0 \\ 
b_0 & \cdots & b_{n-1}
\end{pmatrix}, 
\]
where $\bm 0$ denotes a zero matrix, we have that the system~\eqref{fstsys} can be expressed in the form~\eqref{hlsystem}.
Hence, if we assume $a_n^2\ne b_n^2$,
\[(F-G)^{-1}  =\(\begin{array}{c|c}
\Id &\bm 0 \\ \hline
\bm 0 &( a_{n}-b_n)^{-1}
\end{array}\)
 ,\ (F+G)^{-1}=\(\begin{array}{c|c}
 \Id &\bm 0 \\ \hline
 \bm 0 &( a_{n}+b_n)^{-1}
 \end{array}\),
\]
and $(F-G)^{-1}(A-B)(F+G)^{-1}(A+B)=$
 \begin{align*}  & \begin{pmatrix}
0 & -1 & 0 &  \cdots & 0 & 0 \\ 
0 & 0 & -1 &  \cdots & 0 & 0 \\ 
\vdots &  \vdots & \vdots & \ddots & \vdots & \vdots \\ 
0 & 0 & 0 &  \cdots & -1 & 0 \\ 
0 & 0 & 0 &  \cdots & 0 & -1 \\ 
\frac{a_0-b_0}{a_n-b_n} & \frac{a_1-b_1}{a_n-b_n} & \frac{a_2-b_2}{a_n-b_n} &  \cdots & \frac{a_{n-2}-b_{n-2}}{a_n-b_n} &  \frac{a_{n-1}-b_{n-1}}{a_n-b_n}
\end{pmatrix}\begin{pmatrix}
0 & -1 & 0 &  \cdots & 0 & 0 \\ 
0 & 0 & -1 &  \cdots & 0 & 0 \\ 
\vdots &  \vdots & \vdots & \ddots & \vdots & \vdots \\ 
0 & 0 & 0 &  \cdots & -1 & 0 \\ 
0 & 0 & 0 &  \cdots & 0 & -1 \\ 
\frac{a_0+b_0}{a_n+b_n} & \frac{a_1+b_1}{a_n+b_n} & \frac{a_2+b_2}{a_n+b_n} &  \cdots & \frac{a_{n-2}+b_{n-2}}{a_n+b_n} &  \frac{a_{n-1}+b_{n-1}}{a_n+b_n} 
\end{pmatrix} \\ = & 
  \begin{pmatrix}
0 & 0 & 1 &  \cdots &  0 \\ 
\vdots &  \vdots & \vdots & \ddots &  \vdots \\ 
0 & 0 & 0 &  \cdots & 1 \\ 
-\frac{a_0+b_0}{a_n+b_n} & -\frac{a_1+b_1}{a_n+b_n} & -\frac{a_2+b_2}{a_n+b_n} &  \cdots & - \frac{a_{n-1}+b_{n-1}}{a_n+b_n} \\
\frac{(a_{n-1}-b_{n-1})(a_0+b_0)}{a_n^2-b_n^2} & \frac{(a_{n-1}-b_{n-1})(a_1+b_1)}{a_n^2-b_n^2}-\frac{a_0+b_0}{a_n+b_n} & \frac{(a_{n-1}-b_{n-1})(a_2+b_2)}{a_n^2-b_n^2}-\frac{a_1+b_1}{a_n+b_n} &  \cdots &   \frac{a_{n-1}^2-b_{n-1}^2}{a_n^2-b_n^2}- \frac{a_{n-2}+b_{n-2}}{a_n+b_n}
\end{pmatrix}.
\end{align*}
This expression is too convoluted to compute the square root in a general way, but we can study some simpler settings with further assumptions.
\begin{exa}
Take $n=4$ and assume $a_j=-b_j$, $j=1,2$; $a_3=b_3=0$; $\c=-(a_0+b_0)/(a_4+b_4)>0$, that is,
\begin{equation}\label{exaom}Lu=a_4u^{(4)}(t)+b_4u^{(4)}(-t)+a_2[u''(t)-u''(-t)]+a_1[u'(t)-u'(-t)]+a_0u(t)+b_0u(-t)=0.\end{equation}
Then
\[(F-G)^{-1}(A-B)(F+G)^{-1}(A+B)= \begin{pmatrix}
0 & 0 & 1   &  0 \\ 
0 & 0 & 0  & 1 \\ 
\c &0 & 0    & 0 \\
0 & \c & 0 &   0
\end{pmatrix},\]
and 
\[\Omega=\left(
\begin{array}{cccc}
 \left(\frac{1}{2}+\frac{i}{2}\right) \sqrt[4]{\gamma } & 0 & \frac{\frac{1}{2}-\frac{i}{2}}{\sqrt[4]{\gamma }} & 0 \\
 0 & \left(\frac{1}{2}+\frac{i}{2}\right) \sqrt[4]{\gamma } & 0 & \frac{\frac{1}{2}-\frac{i}{2}}{\sqrt[4]{\gamma }} \\
 \left(\frac{1}{2}-\frac{i}{2}\right) \gamma ^{3/4} & 0 & \left(\frac{1}{2}+\frac{i}{2}\right) \sqrt[4]{\gamma } & 0 \\
 0 & \left(\frac{1}{2}-\frac{i}{2}\right) \gamma ^{3/4} & 0 & \left(\frac{1}{2}+\frac{i}{2}\right) \sqrt[4]{\gamma } \\
\end{array}
\right).\]
All the same,
\[X(t)=\left(
\begin{smallmatrix}
 \frac{1}{2} \left(\cos \left(t \sqrt[4]{\gamma }\right)+\cosh \left(t \sqrt[4]{\gamma }\right)\right) & \frac{\sin \left(t \sqrt[4]{\gamma }\right)+\sinh \left(t \sqrt[4]{\gamma }\right)}{2 \sqrt[4]{\gamma }} & \frac{\cosh \left(t \sqrt[4]{\gamma }\right)-\cos \left(t \sqrt[4]{\gamma }\right)}{2 \sqrt{\gamma }} & \frac{\sinh \left(t \sqrt[4]{\gamma }\right)-\sin \left(t \sqrt[4]{\gamma }\right)}{2 \gamma ^{3/4}} \\
 \frac{1}{2} \sqrt[4]{\gamma } \left(\sinh \left(t \sqrt[4]{\gamma }\right)-\sin \left(t \sqrt[4]{\gamma }\right)\right) & \frac{1}{2} \left(\cos \left(t \sqrt[4]{\gamma }\right)+\cosh \left(t \sqrt[4]{\gamma }\right)\right) & \frac{\sin \left(t \sqrt[4]{\gamma }\right)+\sinh \left(t \sqrt[4]{\gamma }\right)}{2 \sqrt[4]{\gamma }} & \frac{\cosh \left(t \sqrt[4]{\gamma }\right)-\cos \left(t \sqrt[4]{\gamma }\right)}{2 \sqrt{\gamma }} \\
 \frac{1}{2} \sqrt{\gamma } \left(\cosh \left(t \sqrt[4]{\gamma }\right)-\cos \left(t \sqrt[4]{\gamma }\right)\right) & \frac{1}{2} \sqrt[4]{\gamma } \left(\sinh \left(t \sqrt[4]{\gamma }\right)-\sin \left(t \sqrt[4]{\gamma }\right)\right) & \frac{1}{2} \left(\cos \left(t \sqrt[4]{\gamma }\right)+\cosh \left(t \sqrt[4]{\gamma }\right)\right) & \frac{\sin \left(t \sqrt[4]{\gamma }\right)+\sinh \left(t \sqrt[4]{\gamma }\right)}{2 \sqrt[4]{\gamma }} \\
 \frac{1}{2} \gamma ^{3/4} \left(\sin \left(t \sqrt[4]{\gamma }\right)+\sinh \left(t \sqrt[4]{\gamma }\right)\right) & \frac{1}{2} \sqrt{\gamma } \left(\cosh \left(t \sqrt[4]{\gamma }\right)-\cos \left(t \sqrt[4]{\gamma }\right)\right) & \frac{1}{2} \sqrt[4]{\gamma } \left(\sinh \left(t \sqrt[4]{\gamma }\right)-\sin \left(t \sqrt[4]{\gamma }\right)\right) & \frac{1}{2} \left(\cos \left(t \sqrt[4]{\gamma }\right)+\cosh \left(t \sqrt[4]{\gamma }\right)\right) \\
\end{smallmatrix}
\right).\]
Hence, any solution equation \eqref{exaom} is of the form
\[u(t)= c_1 \cos\sqrt[4]{\gamma } t+c_2\sin\sqrt[4]{\gamma } t+c_3 \cosh\sqrt[4]{\gamma } t+c_4\sinh\sqrt[4]{\gamma } t,\]
where $c_1,\dots,c_4\in\bR$ are arbitrary constants.
\end{exa}
Finally, we introduce a direct Corollary of Theorem \ref{thmexpfm}.
\begin{cor}\label{corfmu} If $F-G$, $F+G$, $A-B$ and $A+B$ are invertible, every matrix $Y$ satisfying (H1)--(H3) for problem \eqref{hlsystem} is of the form $Y=X\Lambda$ for some invertible $\Lambda=Y(0)\in\cM(\bR_n)$ where $X$ is defined as in \eqref{fme}.
\end{cor}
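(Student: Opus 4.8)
The plan is to recognize that this Corollary is an immediate synthesis of Theorem~\ref{thmexpfm} and Corollary~\ref{fmu}, so the real work reduces to checking that the hypotheses of each feed correctly into the other. First I would invoke the extra invertibility assumptions: since $A-B$ and $A+B$ are invertible, together with $F-G$ and $F+G$, the matrix $E=(F-G)^{-1}(A-B)(F+G)^{-1}(A+B)$ is a product of invertible matrices and hence invertible. Proposition~\ref{prosq} then supplies a square root $\Omega$ of $E$, so the expression \eqref{fme} for $X$ is well defined, and by Theorem~\ref{thmexpfm} this $X$ is a fundamental matrix of \eqref{hlsystem}. In particular $X$ satisfies (H1)--(H3), and from its construction $X(0)=\Id$.

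Next, because $F+G$ and $F-G$ are invertible, the hypotheses of Corollary~\ref{fmu} are met for the pair $X$ and $Y$ (both satisfying (H1)--(H3)), which yields an invertible matrix $\Lambda\in\cM_n(\bR)$ with $Y=X\Lambda$. To identify $\Lambda$ explicitly, I would evaluate this identity at $t=0$: since $X(0)=\Id$, we obtain $Y(0)=X(0)\Lambda=\Lambda$, so $\Lambda=Y(0)$, which is invertible by (H3). This is exactly the normalization claimed in the statement.

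I do not anticipate a genuine obstacle here; the argument is essentially bookkeeping. The only points requiring care are verifying that the stronger invertibility hypotheses are precisely what guarantees the existence of $X$ in the closed form \eqref{fme} (as opposed to the series form \eqref{Xseries}, which needs only $F\pm G$ invertible), and tracking the normalization $X(0)=\Id$ so as to pin down $\Lambda=Y(0)$ rather than merely an unspecified invertible factor.
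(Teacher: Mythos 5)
Your proof is correct and follows exactly the paper's own argument: combine Theorem~\ref{thmexpfm} (which gives $X$ in the form \eqref{fme}, with $X(0)=\Id$, under the stronger invertibility hypotheses) with Corollary~\ref{fmu} (which gives $Y=X\Lambda$ for invertible $\Lambda$), then evaluate at $t=0$ to identify $\Lambda=Y(0)$. The paper states this in two sentences; your version merely spells out the same bookkeeping.
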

\begin{proof} The result is straightforward from Corollary \ref{fmu} and Theorem \ref{thmexpfm}. Since $X(0)=\Id$, $Y(0)=\Lambda$.
\end{proof}
\subsection{Rewriting of the fundamental matrix using the PAF}
The next results study those values of $t$ for which $X(t)$, given by expression \eqref{fme}, is singular. The following theorem is inspired in a result of \cite{Toj4} called the (Hyperbolic) Phasor Addition Formula which we state now.
\begin{lem}[\textsc{Hyperbolic Phasor Addition Formula}, \cite{Toj4}]
Let $ a$, $ b$, $\omega\in \mathbb R$. Then
\begin{equation*} a \cosh  \omega t +  b\sinh \omega= \begin{cases} \sqrt{| a^2- b^2|} \cosh\left(\frac{1}{2}\ln\left|\frac{ a+ b}{ a- b}\right|+ \omega \right), & a>| b|,
\\ -\sqrt{| a^2- b^2|}  \cosh\left(\frac{1}{2}\ln\left|\frac{ a+ b}{ a- b}\right|+ \omega \right), & -  a>| b|,
\\
\sqrt{| a^2- b^2|} \sinh\left(\frac{1}{2}\ln\left|\frac{ a+ b}{ a- b}\right|+ \omega \right), &   b>| a|,
\\
-\sqrt{| a^2- b^2|} \sinh\left(\frac{1}{2}\ln\left|\frac{ a+ b}{ a- b}\right|+ \omega \right), &  - b>| a|,
\\
 a\,e^ {\omega }, &  a= b,\\
 a\,e^{- \omega }, & a=- b.\end{cases}\end{equation*}
\end{lem}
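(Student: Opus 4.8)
The plan is to push everything through to exponentials, where the hyperbolic phasor identity becomes transparent and the six-way case split appears by itself. First I would rewrite the left-hand side as
\[
a\cosh\omega + b\sinh\omega = \frac{a+b}{2}\,e^{\omega} + \frac{a-b}{2}\,e^{-\omega},
\]
and set $p:=(a+b)/2$ and $q:=(a-b)/2$, so that the quantity to analyse is simply $p\,e^{\omega}+q\,e^{-\omega}$. The six cases in the statement are governed \emph{entirely} by the signs of $p$ and $q$: indeed $a>|b|$ is equivalent to $p,q>0$; $-a>|b|$ to $p,q<0$; $b>|a|$ to $p>0>q$; $-b>|a|$ to $q>0>p$; while the two boundary cases $a=b$ and $a=-b$ are exactly $q=0$ and $p=0$. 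I would open the proof by recording these equivalences, since they are precisely what matches the analytic computation to the case split as written.

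For the non-degenerate cases I would use a single factoring trick. Whenever $p$ and $q$ are both positive, write $p=\sqrt{pq}\,e^{\delta}$ and $q=\sqrt{pq}\,e^{-\delta}$ with $\delta=\tfrac12\ln(p/q)$, so that
\[
p\,e^{\omega}+q\,e^{-\omega}=\sqrt{pq}\,\bigl(e^{\omega+\delta}+e^{-(\omega+\delta)}\bigr)=2\sqrt{pq}\,\cosh(\omega+\delta).
\]
Here $2\sqrt{pq}=\sqrt{(a+b)(a-b)}=\sqrt{a^2-b^2}$ and $\delta=\tfrac12\ln\frac{a+b}{a-b}=\tfrac12\ln\left|\tfrac{a+b}{a-b}\right|$, which is exactly the first case. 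When $p,q<0$ I would factor out $-1$ and apply the identical computation to $|p|,|q|$, picking up a global sign and producing the $-\sqrt{|a^2-b^2|}\cosh(\,\cdot\,)$ of the second case. When $p$ and $q$ have opposite signs the same bookkeeping yields a \emph{difference} of exponentials rather than a sum, hence $\pm 2\sqrt{|pq|}\,\sinh(\omega+\delta)$ with the same $\delta=\tfrac12\ln|p/q|$, matching the third and fourth cases; the leading sign is $+$ when $p>0$ and $-$ when $p<0$. In every instance $2\sqrt{|pq|}=\sqrt{|a^2-b^2|}$ and $\tfrac12\ln|p/q|=\tfrac12\ln\left|\tfrac{a+b}{a-b}\right|$, so no auxiliary $\arctanh$ identity is needed.

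Finally, the degenerate cases are immediate, and they must be separated out precisely because $\sqrt{|pq|}$ then vanishes and cannot play the role of an amplitude: if $a=b$ then $q=0$ and the expression collapses to $p\,e^{\omega}=a\,e^{\omega}$, while if $a=-b$ then $p=0$ and it collapses to $q\,e^{-\omega}=a\,e^{-\omega}$. I do not expect any genuine obstacle here; the entire content is the exponential rewriting, after which each case is a one-line factorisation. The only point demanding care is the \textbf{sign bookkeeping} across the four sign-combinations of $(p,q)$ --- in particular checking that the absolute value inside the logarithm is forced exactly when $p$ and $q$ differ in sign, and that the leading $\pm$ tracks the sign of $p$ in the $\sinh$ cases and of $a$ in the $\cosh$ cases.
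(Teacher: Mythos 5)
Your proof is correct: the identification $a\cosh\omega+b\sinh\omega=p\,e^{\omega}+q\,e^{-\omega}$ with $p=(a+b)/2$, $q=(a-b)/2$, the equivalence of the six cases with the sign patterns of $(p,q)$, and the factorization $p\,e^{\omega}+q\,e^{-\omega}=2\sqrt{|pq|}\,(\pm\cosh\ \text{or}\ \pm\sinh)\bigl(\omega+\tfrac12\ln|p/q|\bigr)$ all check out, including the signs in the four non-degenerate cases and the collapse to $a\,e^{\pm\omega}$ in the degenerate ones. One point of comparison: the paper does not prove this scalar lemma at all (it is quoted from \cite{Toj4}), but its proof of the matrix generalization, Theorem \ref{mpaf}, runs along exactly the lines you chose --- set $\gamma=e^{U}$, write $M\cosh U+N\sinh U=\tfrac12\left[(M+N)\gamma+(M-N)\gamma^{-1}\right]$, and absorb the square roots $M_0=\sqrt{M+N}$, $N_0=\sqrt{M-N}$ into a shift $U_0=\ln\bigl(N_0^{-1}M_0\bigr)$ of the argument. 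So your argument is the scalar specialization of the paper's own technique; what you add, and what is genuinely required over $\bR$ but invisible in the matrix statement, is the sign bookkeeping forcing the case split, since real square roots and logarithms only exist for positive quantities, which is precisely why the scalar formula has six branches while the matrix identity has one.
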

\begin{thm}[\textsc{Matrix Phasor Addition Formula}]\label{mpaf} Let $M,N,U\in\cM_n(\bC)$ such that $M+N$ and $M-N$ are invertible and $M$, $N$ and $U$ commute. Then, if $M_0, N_0$ are, respectively, the square roots of $M+N$ and $M-N$, according to Proposition~\ref{prosq}, and $U_0$ is the logarithm of $N_0^{-1}M_0$, according to Proposition~\ref{prosq2}, the following identity holds:
\begin{equation*}\begin{aligned}M\cosh U+N\sinh U & =M_0N_0\cosh(U_0+U)\\ & =\sqrt{(M+N)(M-N)}\cosh\left(\ln \left[\left(\sqrt{M-N}\right)^{-1}\sqrt{M+N}\right]+U\right).\end{aligned}\end{equation*}
\end{thm}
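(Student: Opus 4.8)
The plan is to reduce the matrix identity to the scalar Phasor Addition Formula by exploiting that, under the hypotheses, \emph{all} the matrices in sight form a commuting family. Once that is established, the matrix exponential obeys $e^{A+B}=e^Ae^B$, the hyperbolic addition law survives verbatim, and the whole computation collapses to two cancellations. Concretely, writing $\cosh$ and $\sinh$ through the primary matrix exponential, the identity $M\cosh U+N\sinh U=M_0N_0\cosh(U_0+U)$ will drop out of the facts $M_0N_0\cosh U_0=M$ and $M_0N_0\sinh U_0=N$.

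The first step, and really the only delicate one, is the commutativity bookkeeping. By Proposition~\ref{prosq} the roots $M_0$ of $M+N$ and $N_0$ of $M-N$ are \emph{polynomials} in $M+N$ and $M-N$ respectively; since $M$ and $N$ commute, so do $M+N$ and $M-N$, whence $M_0$ and $N_0$ commute with each other and with every polynomial in $M\pm N$. Both are invertible because $(M+N)$ and $(M-N)$ are. Likewise $U$ commutes with $M$ and $N$, hence with $M\pm N$, hence with $M_0$, $N_0$ and with $N_0^{-1}M_0$. Finally, by Proposition~\ref{prosq2} the logarithm $U_0=\log(N_0^{-1}M_0)$ is a polynomial in $N_0^{-1}M_0$, so $U_0$ commutes with $U$, $M_0$ and $N_0$. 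Thus $\{M_0,N_0,U_0,U\}$ is a mutually commuting family, each member also commuting with $M$ and $N$.

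With this in hand the computation is forced. Using $e^{U_0}=N_0^{-1}M_0$, hence $e^{-U_0}=M_0^{-1}N_0$, together with the commutativity of $M_0,N_0$, I would compute
\begin{align*}
M_0N_0\cosh U_0 &= \tfrac12 M_0N_0\(N_0^{-1}M_0+M_0^{-1}N_0\)=\tfrac12\(M_0^2+N_0^2\)=\tfrac12\((M+N)+(M-N)\)=M,\\
M_0N_0\sinh U_0 &= \tfrac12 M_0N_0\(N_0^{-1}M_0-M_0^{-1}N_0\)=\tfrac12\(M_0^2-N_0^2\)=\tfrac12\((M+N)-(M-N)\)=N.
\end{align*}
Since $U_0$ and $U$ commute, $e^{\pm(U_0+U)}=e^{\pm U_0}e^{\pm U}$, and expanding the exponentials exactly as in the scalar case gives $\cosh(U_0+U)=\cosh U_0\cosh U+\sinh U_0\sinh U$. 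Multiplying on the left by the commuting factor $M_0N_0$ and inserting the two identities above yields $M_0N_0\cosh(U_0+U)=M\cosh U+N\sinh U$, the first claimed equality. The second is a rewriting of notation: with $M_0=\sqrt{M+N}$ and $N_0=\sqrt{M-N}$ one has $(M_0N_0)^2=M_0^2N_0^2=(M+N)(M-N)$ by commutativity, so $M_0N_0$ is the square root denoted $\sqrt{(M+N)(M-N)}$, and $U_0=\ln[(\sqrt{M-N})^{-1}\sqrt{M+N}]$.

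I expect the genuine obstacle to lie entirely in the first step: verifying that every matrix commutes with every other, which hinges on the polynomial nature of the roots and logarithm supplied by Propositions~\ref{prosq} and~\ref{prosq2}. Once commutativity is secured, the exponential addition law, the cancellations $M_0N_0\cdot N_0^{-1}M_0=M_0^2$, and the distribution of $M_0N_0$ across the sum are all automatic, and the argument is otherwise identical to the scalar Hyperbolic Phasor Addition Formula.
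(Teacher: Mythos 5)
Your proof is correct and takes essentially the same route as the paper: both arguments rest on the identical commutativity bookkeeping (Propositions~\ref{prosq} and~\ref{prosq2} make $M_0$, $N_0$, $U_0$, $U$ a commuting family) and on the key identity $e^{U_0}=N_0^{-1}M_0$. The only difference is cosmetic direction — you expand $M_0N_0\cosh(U_0+U)$ via the addition law together with $M_0N_0\cosh U_0=M$ and $M_0N_0\sinh U_0=N$, while the paper sets $\gamma=e^U$ and factors $M_0N_0$ out of $\frac{1}{2}\left[(M+N)\gamma+(M-N)\gamma^{-1}\right]$; the two computations are the same steps in opposite order.
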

\begin{proof} Since $M$ and $N$ commute, so do $M+N$ and $M-N$. Also, since  $M+N$ and $M-N$ commute, and $M+N$ and $M_0$ and $M+N$ and $N_0$ commute (because they are their respective square roots), we deduce, from Proposition~\ref{prosq}, that $M_0$ and $N_0$ commute. Let $\c=e^U$. $U$ commutes with $M$ and $N$, so $\c$ commutes with $M_0$ and $N_0$. Then
\begin{align*} & M\cosh U+N\sinh U=M\frac{1}{2}\(\c+\c^{-1}\)+N\frac{1}{2}\(\c-\c^{-1}\)=\frac{1}{2}\left[\(M+N\)\c+\(M-N\)\c^{-1}\right]\\= & \frac{1}{2}\(M_0^2\c+N_0^2\c^{-1}\)=  \frac{1}{2}M_0\(M_0\c+M_0^{-1}N_0^2\c^{-1}\)=  \frac{1}{2}M_0\(M_0\c+N_0M_0^{-1}N_0\c^{-1}\) \\= & \frac{1}{2}M_0N_0\(N_0^{-1}M_0\c+M_0^{-1}N_0\c^{-1}\)=\frac{1}{2}M_0N_0\left[N_0^{-1}M_0\c+\(N_0^{-1}M_0\c\)^{-1}\right]=\frac{1}{2}M_0N_0\(e^{U_0+U}+e^{-(U_0+U)}\)\\= & M_0N_0\cosh(U_0+U).
\end{align*}
\end{proof}
Now we can state the following result as a direct consequence of the Phasor Addition Formula.
\begin{lem}\label{lempha}If $A+B$, $A-B$, $F+G$, $F-G$, $\Id+(F+G)^{-1}(A+B)$ and $\Id-(F+G)^{-1}(A+B)$ are invertible, and $(F+G)^{-1}(A+B)$ and $\Omega$ commute, then
\begin{equation*}\begin{aligned}X(t)  = & \cosh \Omega t -(F+G)^{-1}(A+B)\Omega^{-1}\sinh\Omega t\\  = & \sqrt{\left[\Id-(F+G)^{-1}(A+B)\right]\left[\Id+(F+G)^{-1}(A+B)\right]}\\ & \cdot \cosh\left(\ln \left[\left(\sqrt{\Id+(F+G)^{-1}(A+B)}\right)^{-1}\sqrt{\Id-(F+G)^{-1}(A+B)}\right]+\Omega t\right).\end{aligned}\end{equation*}
\end{lem}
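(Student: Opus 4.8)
The plan is to recognise the closed form \eqref{fme} as a direct instance of the Matrix Phasor Addition Formula (Theorem~\ref{mpaf}), so that the lemma reduces to a one-line specialisation once the hypotheses of that theorem are verified. Abbreviating $K:=(F+G)^{-1}(A+B)$ and setting $U:=\Omega t$, the right-hand side of \eqref{fme} reads
\[X(t)=\Id\,\cosh U-K\Omega^{-1}\sinh U,\]
which is exactly of the form $M\cosh U+N\sinh U$ treated in Theorem~\ref{mpaf}, under the forced identification
\[M:=\Id,\qquad N:=-K\Omega^{-1},\qquad U:=\Omega t.\]
The whole task therefore reduces to checking the two structural hypotheses of Theorem~\ref{mpaf} --- pairwise commutativity of $M$, $N$, $U$, and invertibility of $M+N$ and $M-N$ --- and then transcribing its conclusion.

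First I would verify commutativity. Since $M=\Id$ commutes with everything, it is enough that $N$ and $U$ commute; as $K$ commutes with $\Omega$ by hypothesis it also commutes with $\Omega^{-1}$, so $N=-K\Omega^{-1}$ commutes with $U=\Omega t$. Next I would treat invertibility: the matrices actually entering the square roots are $M\pm N=\Id\mp K\Omega^{-1}$, so it is their invertibility that Theorem~\ref{mpaf} requires, and the commutativity of $K$ with $\Omega$ guarantees that the primary square roots and logarithm supplied by Propositions~\ref{prosq} and~\ref{prosq2} are well defined and carry the commutativity used inside the proof of Theorem~\ref{mpaf}. With both hypotheses in hand, Theorem~\ref{mpaf} yields at once
\[X(t)=\sqrt{(M+N)(M-N)}\,\cosh\!\left(\ln\!\left[\bigl(\sqrt{M-N}\bigr)^{-1}\sqrt{M+N}\right]+U\right),\]
and resubstituting $M\pm N$ and $U=\Omega t$ gives the displayed identity.

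The step that I expect to demand the most care is the bookkeeping of the invertibility hypotheses and of exactly which primary matrix functions appear, rather than any genuine calculation. One must make sure that the invertibility conditions are read off for the matrices $\Id\mp K\Omega^{-1}$ that truly arise in the formula, and that $K$ (hence $\Id\mp K\Omega^{-1}$) commutes with $\Omega$, so that the square roots $M_0,N_0$ of Proposition~\ref{prosq} and the exponential $e^{U}$ all commute exactly as the chain of equalities in the proof of Theorem~\ref{mpaf} presupposes. Once this commutativity and invertibility are recorded, no further argument is needed and the lemma follows directly from Theorem~\ref{mpaf}.
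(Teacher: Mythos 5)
Your overall strategy is exactly the paper's: Lemma~\ref{lempha} is given there with no written proof, merely as ``a direct consequence of the Phasor Addition Formula,'' and your identification $M=\Id$, $N=-K\Omega^{-1}$, $U=\Omega t$, with $K:=(F+G)^{-1}(A+B)$, is indeed the only way to cast $X(t)=\cosh\Omega t-K\Omega^{-1}\sinh\Omega t$ in the form $M\cosh U+N\sinh U$; your verification of the commutativity hypotheses is also correct.

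The problem is the step you dismiss as bookkeeping: it is not a formality, and it fails. With your (forced) choice of $M$ and $N$, Theorem~\ref{mpaf} produces
\begin{equation*}
X(t)=\sqrt{\left(\Id-K\Omega^{-1}\right)\left(\Id+K\Omega^{-1}\right)}\,\cosh\left(\ln\left[\left(\sqrt{\Id+K\Omega^{-1}}\right)^{-1}\sqrt{\Id-K\Omega^{-1}}\right]+\Omega t\right),
\end{equation*}
whereas the displayed identity (and the invertibility hypotheses of the lemma) carry $K$, not $K\Omega^{-1}$. The two statements are not equivalent: applying Theorem~\ref{mpaf} in the reverse direction, with $M=\Id$ and $N=-K$ (its hypotheses are exactly those of the lemma), shows that the printed right-hand side equals $\cosh\Omega t-K\sinh\Omega t$, so the printed identity would force $K\sinh\Omega t=K\Omega^{-1}\sinh\Omega t$ for all $t$, i.e.\ $K=K\Omega^{-1}$. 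A scalar example shows this is false: for $n=1$, $F=1$, $G=0$, $A=2$, $B=1$, all hypotheses of the lemma hold, $K=3$, $\Omega=\sqrt{3}$, and $X(t)=\cosh\sqrt{3}\,t-\sqrt{3}\sinh\sqrt{3}\,t$, while the printed right-hand side equals $\cosh\sqrt{3}\,t-3\sinh\sqrt{3}\,t$. So the lemma as printed is erroneous (the factors $\Omega^{-1}$ are missing), what your argument actually establishes is the corrected statement with $K\Omega^{-1}$ throughout (whose invertibility must then be assumed in place of that of $\Id\pm K$), and your closing claim that resubstitution ``gives the displayed identity'' is precisely the step that does not hold. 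You noticed this yourself when you wrote that the matrices ``that truly arise'' are $\Id\mp K\Omega^{-1}$; that observation should have been carried to its conclusion, namely that the statement must be corrected, not that it follows.
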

\begin{cor}Under the conditions of Lemma \ref{lempha}, $X(t)$ is singular for every $t$ satisfying
\[\left|\Lambda e^{\Omega t}\pm i\Id\right|=0,\]
where
\[\Lambda=\left(\sqrt{\Id+(F+G)^{-1}(A+B)}\)^{-1}\sqrt{\Id-(F+G)^{-1}(A+B)}.\]
\end{cor}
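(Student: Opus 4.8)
The plan is to exploit the factorization furnished by Lemma~\ref{lempha} and reduce the singularity of $X(t)$ to that of a single hyperbolic-cosine factor. In the notation of that lemma, write $X(t)=C\,\cosh(U_0+\Omega t)$, where $C=\sqrt{[\Id-(F+G)^{-1}(A+B)][\Id+(F+G)^{-1}(A+B)]}$ and $U_0=\ln\Lambda$ with $\Lambda=\(\sqrt{\Id+(F+G)^{-1}(A+B)}\)^{-1}\sqrt{\Id-(F+G)^{-1}(A+B)}$. First I would observe that $C$ is invertible: by the hypotheses of Lemma~\ref{lempha} both $\Id\pm(F+G)^{-1}(A+B)$ are invertible, so by the remark following Proposition~\ref{prosq} their square roots are invertible, and $C$ is a product of such factors. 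Hence $X(t)$ is singular if and only if $\cosh(U_0+\Omega t)$ is singular.

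Next I would convert this $\cosh$ into a rational expression in a single invertible matrix. Since the square roots $M_0,N_0$ appearing in $\Lambda$ are polynomials in $(F+G)^{-1}(A+B)$, so is $U_0=\ln(N_0^{-1}M_0)$; together with the commutativity hypothesis of Lemma~\ref{lempha} this shows $U_0$ commutes with $\Omega$, so the exponential splits and $e^{U_0+\Omega t}=e^{U_0}e^{\Omega t}=\Lambda e^{\Omega t}=:W$, where $e^{U_0}=\Lambda$ by Proposition~\ref{prosq2} and $W$ is invertible. Then
\[\cosh(U_0+\Omega t)=\tfrac12\(W+W^{-1}\)=\tfrac12\,W^{-1}\(W^2+\Id\),\]
so, $W^{-1}$ being invertible, $X(t)$ is singular precisely when $|W^2+\Id|=0$.

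Finally I would factor $W^2+\Id=(W+i\Id)(W-i\Id)$, which is legitimate because $W$ commutes with $i\Id$, and apply multiplicativity of the determinant to get $|W^2+\Id|=|W+i\Id|\,|W-i\Id|$. This product vanishes exactly when one of the factors vanishes, i.e.\ when $|\Lambda e^{\Omega t}\pm i\Id|=0$, which is the asserted condition.

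The only genuinely delicate point is the bookkeeping that identifies $e^{U_0+\Omega t}$ with $\Lambda e^{\Omega t}$: one must verify that $U_0=\ln(N_0^{-1}M_0)$ commutes with $\Omega t$ so that $e^{U_0+\Omega t}=e^{U_0}e^{\Omega t}$, and that $e^{\ln(N_0^{-1}M_0)}=N_0^{-1}M_0=\Lambda$ via Proposition~\ref{prosq2}. Everything else is the multiplicativity of $|\cdot|$ and the harmless removal of the invertible leading factor $C$, so I expect the commutativity argument to be the main obstacle and the rest to be routine.
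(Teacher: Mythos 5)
Your proof is correct, but it takes a genuinely different route from the paper's. The paper argues spectrally: it invokes Theorem \ref{gft}.3 (spectral mapping for primary matrix functions) to reduce singularity of a matrix $\cosh$ to the condition that the argument have an eigenvalue in the zero set $\left\{(k+1/2)\pi i : k\in\bZ\right\}$ of $\cosh$, rewrites $\ln\Lambda+\Omega t$ as $\ln\left[\Lambda e^{\Omega t}\right]$ inside the $\cosh$, and then translates ``eigenvalue of the logarithm lies in that set'' into ``eigenvalue of $\Lambda e^{\Omega t}$ equals $\pm i$'', i.e. $\left|\Lambda e^{\Omega t}\pm i\Id\right|=0$. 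You instead work algebraically: after the same commutativity bookkeeping the paper needs (which you justify correctly via Propositions \ref{prosq} and \ref{prosq2}, so that $e^{U_0+\Omega t}=\Lambda e^{\Omega t}=:W$), you write
\begin{equation*}
\cosh(U_0+\Omega t)=\tfrac12 W^{-1}\left(W+i\Id\right)\left(W-i\Id\right)
\end{equation*}
and finish with multiplicativity of the determinant. Your version buys three things: it avoids the Jordan-form machinery of Theorem \ref{gft}.3 altogether; it delivers an equivalence (singular if and only if one of the two determinants vanishes), not merely the sufficiency stated in the corollary; and it quietly repairs a branch-of-logarithm subtlety in the paper's step $\cosh\left(\ln\Lambda+\Omega t\right)=\cosh\left(\ln\left[\Lambda e^{\Omega t}\right]\right)$ --- since $\Omega t$ need not be the principal logarithm of $e^{\Omega t}$, that identity is most honestly justified by exactly the exponential computation you perform. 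What the paper's route buys in exchange is brevity (a two-line argument once Theorem \ref{gft}.3 is granted) and an explicit eigenvalue interpretation, namely that $X(t)$ degenerates precisely when $\pm i\in\sp\left(\Lambda e^{\Omega t}\right)$.
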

\begin{proof} From Theorem \ref{gft}.3, we know that, for any matrix $M$, $\sp(\cosh M)=\cosh(\sp(M))$ (where $\sp(M)$ denotes the spectrum of $M$), so, in order to see whether $M$ is singular or not we have to check if the eigenvalues of $M$ are in the set of zeros of $\cosh$, that is $Z_1=\left\{(k+1/2)\pi i\ :\ k\in\bZ\right\}$.\par
Now,
\begin{align*} &\cosh\left(\ln \left[\left(\sqrt{\Id+(F+G)^{-1}(A+B)}\right)^{-1}\sqrt{\Id-(F+G)^{-1}(A+B)}\right]+\Omega t\right)\\  = &\cosh\left(\ln \left[\left(\sqrt{\Id+(F+G)^{-1}(A+B)}\right)^{-1}\sqrt{\Id-(F+G)^{-1}(A+B)}e^{\Omega t}\right]\right).
\end{align*}
Thus, in order for the logarithm of $z\in\bC$ to be in $Z_1$, we need $z=\pm i$. Hence, we have to solve the equation
\[\left|\Lambda e^{\Omega t}\pm i\Id\right|=0.\]
\end{proof}
\begin{exa}\label{exasen}Consider the system of equations
\begin{equation*}\begin{aligned} u'(t) & =\a\, u(-t)+ \b\, v(-t) \\ v'(t), & =\c\, u(-t)+ \d\, v(-t),\end{aligned}
\end{equation*}
which expressed in matrix form is
\[F\begin{pmatrix} u'(t) \\ v'(t) \end{pmatrix}+B\begin{pmatrix} u(-t) \\ v(-t) \end{pmatrix}=\begin{pmatrix} 1 & 0 \\ 0 & 1
\end{pmatrix}\begin{pmatrix} u'(t) \\ v'(t) \end{pmatrix}+\begin{pmatrix} -\a & -\b \\ -\c & -\d
\end{pmatrix}\begin{pmatrix} u(-t) \\ v(-t) \end{pmatrix}=0.\]
Hence, $\Omega^2=-B^2=(iB)^2$, so we take $\Omega=iB$. Using expression \eqref{Xseries} we get
\begin{equation*}X(t)  =  \cos Bt-\sin Bt.\end{equation*}

On the other hand, assuming $B$, $\Id+B$ and $\Id-B$ are invertible, using Lemma \ref{lempha}, we know that
\[X(t)  = \sqrt{\Id-B^2} \cosh\left(\ln \left[\left(\sqrt{\Id+B}\right)^{-1}\sqrt{\Id-B}\right]+iB t\right).\]
In order to do some explicit computations, we simplify the problem. Let us assume $\a=\d=0$, $\b,\c>0$. Then, using expression \eqref{Xseries}, we get
\[X(t)=
\begin{pmatrix}
 \cos \left( \sqrt{\beta\gamma }t\right) & \sqrt\frac{\b}{\c} \sin \left(\sqrt{\beta \gamma } t\right) \\
 \sqrt\frac{\c}{\b} \sin \left(\sqrt{\beta \gamma } t\right)  & \cos \left( \sqrt{\beta \gamma }t\right)
\end{pmatrix}.\]
We have that \[|X(t)|=\cos^2 \left( \sqrt{\beta \gamma }t\)-\sin^2 \left( \sqrt{\beta \gamma }t\)=\cos\(2\sqrt{\b\c}t\).\]
Hence, $X(t)$ is singular if and only if $t\in\{(k\pi\pm\pi/4)/\sqrt{\b\c}\ :\ k\in\bZ\}$. In particular, $X$ is regular in $\(-\pi/(4\sqrt{\b\c}),\pi/(4\sqrt{\b\c})\)$.
\end{exa}

\section{The method of variation of parameters for the reflection}\label{sec5}
Now that we have proved the existence of a fundamental matrix of problem~\eqref{hlsystem}, we attempt to develop an analog of the well known method of variation of parameters for the case of the reflection.
\subsection{A first attempt: the classical method}
Now we have a problem of the kind
\begin{equation}\label{nhp}u'=Au+B\ol u+\gamma,\end{equation}
where $A,\,B\in\cM_n(\bR)$ and $\gamma\in\bR^n$.

We can consider, as done in the previous section, a fundamental matrix $X$ of  the associated homogeneous problem
\[u'=Au+B\ol u.\]
That is, $X$ satisfies
\begin{equation}\label{pfm}X'=AX+B\ol X.\end{equation}
If we undertake the same approach as in the case of ordinary differential equations, we could assume that a particular solution $x$ of the nonhomogeneous problem~\eqref{nhp} is of the form $x=Xa$ where $a\in\cC(I,\bR^n)$ is a differentiable function. In that case, we have that, for problem~\eqref{nhp},
\[X'a+Xa'=x'=Ax+B\ol x+\gamma=AXa+B\ol X\ol a+\gamma.\]
In order to use the identity \eqref{pfm} to simplify this expression we have to assume that $a$ is even, that is, $\ol a=a$. If that is so, using expression~\eqref{pfm}, we deduce that
$Xa'=\gamma$ and therefore
\begin{equation}\label{aexp}a(t)=\int_{0}^tX^{-1}(s)\gamma(s)\dif s+c.\end{equation}
Where $c\in\bR^n$ is an arbitrary constant vector ($c=0$, for instance).
Therefore, the general solution to the nonhomogeneous equation~\eqref{nhp} would be
\[u(t)=X(t)a(t)+X(t)c=X(t)\left[\int_{0}^tX^{-1}(s)\gamma(s)\dif s+c\right],\]
where $c\in\bR^n$ is an arbitrary vector.\par
There is a clear inconsistency in this chain of thought: the assumption of $a$ being an even function is gratuitous for, in general, $X^{-1}\gamma$ needs not to be an odd function --something necessary for $a$ to be even according to formula ~\eqref{aexp}. This problem motivates a generalization of the method of variation of parameters in a way that allows us to tackle this problem.
\subsection{Second attempt: general method}

We consider now two problems:
\begin{equation}\label{hlsystemnh}Fu'(t)+Gu'(-t)+A u(t)+Bu(-t)=\c, t\in\bR,
\end{equation}
and an associated problem
\begin{equation}\label{hlsystemnh2}Fu'(t)-Gu'(-t)+A u(t)-Bu(-t)=\c, t\in\bR.
\end{equation}
Let us assume that $X$ and $Y$ are, respectively, fundamental matrices of the problems \eqref{hlsystemnh} and \eqref{hlsystemnh2}. We will consider solutions of the form $x=Xa+Yb$ where $a,b\in\cC(I,\bR^n)$ are differentiable functions such that $a$ is even and $b$ is odd. This assumptions are  similar to the ones exploited in \cite[Theorem 2.1]{Toj3} to obtain the Green's function. Then
\begin{align*}\gamma=  & Fx'+G\ol{x'}+Ax+B\ol x= FX'a+FXa'+FY'b+FYb'+G\ol{X'}a-G\ol Xa'\\&-G\ol{Y'}b+G\ol Yb'+AXa+B\ol X a+AYb-B\ol Yb= FXa'+FYb'-G\ol Xa'+G\ol Yb'.\end{align*}
That is, 
\begin{equation*}(FX-G\ol X)a'+(FY+G\ol Y)b'=\gamma.\end{equation*}
Considering the even and odd parts of the equation, we arrive to the system of equations
\begin{equation*}\begin{aligned}(F-G)X_ea'+(F-G)Y_ob' & =\gamma_o, \\ (F+G)X_oa'+(F+G)Y_eb' & =\gamma_e,
	\end{aligned}\end{equation*}
that is, assuming $F+G$ and $F-G$ are invertible,
\begin{equation}\label{systvpf}\begin{aligned}X_ea'+Y_ob' & =(F-G)^{-1}\gamma_o, \\ X_oa'+Y_eb' & =(F+G)^{-1}\gamma_e,
\end{aligned}\end{equation}

 In order to solve system~\eqref{systvpf}, we have to ask for the  associated matrix,
\begin{equation}\label{nXd}\bm\cX=\(\begin{array}{c|c}
   X_e & Y_o\\ \hline
  X_o & Y_e
   \end{array}\)\end{equation}
  to be invertible. \par
  
 The following results will help us to obtain sufficient criteria for $\bm\cX$ to be invertible. 
\begin{rem}\label{remdif}Define $M_+:=(F+G)^{-1}(A+B)$, $M_-:=(F-G)^{-1}(A-B)$. In the case we choose $X$ and $Y$ as given by Theorem \ref{thmexpfm},  we arrive to
\begin{align*}X(t)  = & \sum_{k=0}^\infty\frac{E^k t^{2k}}{(2k)!}  -M_+\sum_{k=0}^\infty\frac{E^k t^{2k+1}}{(2k+1)!},\ E=M_-M_+,\\ Y(t)= & \sum_{k=0}^\infty\frac{\widetilde E^k t^{2k}}{(2k)!}  -M_-\sum_{k=0}^\infty\frac{\widetilde E^k t^{2k+1}}{(2k+1)!},\ \widetilde E=M_+M_- .\end{align*}

Hence,
\renewcommand\arraystretch{1.5}
\[\bm\cX(t)=\sum_{k=0}^\infty\(\begin{array}{c|c}
   \frac{E^k t^{2k}}{(2k)!} & -M_-\frac{\widetilde E^k t^{2k+1}}{(2k+1)!}\\ \hline
  -M_+\frac{E^k t^{2k+1}}{(2k+1)!} & \frac{\widetilde E^k t^{2k}}{(2k)!}
   \end{array}\)
   =\sum_{k=0}^\infty\(\begin{array}{c|c}
      \frac{E^k t^{2k}}{(2k)!} & -\frac{ E^k t^{2k+1}}{(2k+1)!}M_-\\ \hline
     -\frac{\widetilde E^k t^{2k+1}}{(2k+1)!}M_+ & \frac{\widetilde E^k t^{2k}}{(2k)!}
      \end{array}\),\]
      and $\bm\cX(0)=\Id$, so $\bm\cX$ is invertible in a neighborhood of zero.\par 
  On the other hand, 
\begin{align*}\(\begin{array}{c|c}
  0 & M_-\\ \hline
  M_+ & 0
   \end{array}\)\bm\cX(t)  = & \sum_{k=0}^\infty\(\begin{array}{c|c} 
        -E\frac{E^k t^{2k+1}}{(2k+1)!} & M_-\frac{\widetilde E^k t^{2k}}{(2k)!}   \\ \hline
      M_+\frac{E^k t^{2k}}{(2k)!} & -\widetilde E\frac{\widetilde E^k t^{2k+1}}{(2k+1)!}
      \end{array}\)=\sum_{k=0}^\infty\(\begin{array}{c|c}
        -\frac{E^k t^{2k+1}}{(2k+1)!}M_-M_+ & \frac{ E^k t^{2k}}{(2k)!}M_-\\  \hline
   \frac{\widetilde E^k t^{2k}}{(2k)!}M_+ & -\frac{\widetilde E^k t^{2k+1}}{(2k+1)!}M_+M_-\\ 
   \end{array}\) \\ = & \bm\cX(t)\(\begin{array}{c|c}
     0 & M_-\\ \hline
     M_+ & 0
      \end{array}\).\end{align*}
   \renewcommand\arraystretch{1.2}
Since both matrices commute, there exists a simultaneous triangularization.
\begin{thm}[{\cite[Theorem 2.3.3]{Horn}}] Let $\cF\subset\cM_n(\bC)$ be a nonempty commuting family. There is a unitary matrix $U\in\cM_n(\bC)$ such that $U^*AU$ is upper triangular for every $A\in\cF$.
\end{thm}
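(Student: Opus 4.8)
The plan is to prove the statement by induction on the dimension $n$, the engine of the argument being the existence of a common eigenvector for any nonempty commuting family over $\bC$. For $n=1$ every matrix is already upper triangular and $U=\Id$ works, so I would suppose the result holds in all dimensions smaller than $n$ and treat the inductive step.

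First I would establish the key lemma: every nonempty commuting family $\cF\st\cM_n(\bC)$ admits a common eigenvector. To see this, consider the collection of nonzero subspaces of $\bC^n$ that are invariant under every member of $\cF$ (this collection is nonempty, since $\bC^n$ itself qualifies), and choose one, say $W$, of minimal dimension. Fix any $A\in\cF$; since $A$ maps $W$ into $W$, the restriction $A|_W$ has an eigenvalue $\l\in\bC$, and the eigenspace $\ker(A|_W-\l\Id)\st W$ is nonzero. For $B\in\cF$ and $w$ in this eigenspace one has $A(Bw)=B(Aw)=\l Bw$ with $Bw\in W$, so the eigenspace is itself a nonzero $\cF$-invariant subspace of $W$; by minimality it equals $W$. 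Thus $A$ acts as the scalar $\l$ on $W$, and as $A$ was arbitrary, every nonzero vector of $W$ is a common eigenvector of $\cF$.

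Next I would run the inductive step. Normalizing a common eigenvector to a unit vector $v$ and completing it to an orthonormal basis of $\bC^n$ yields a unitary $U_1$ whose first column is $v$; then for each $A\in\cF$ the matrix $U_1^*AU_1$ has the block form $\left(\begin{smallmatrix} \l_A & * \\ 0 & A'\end{smallmatrix}\right)$ with $A'\in\cM_{n-1}(\bC)$. Because $U_1^*(AB)U_1=(U_1^*AU_1)(U_1^*BU_1)$ and $AB=BA$, comparing the lower-right blocks gives $A'B'=B'A'$, so the compressed family $\cF':=\{A':A\in\cF\}$ is again commuting. By the inductive hypothesis there is a unitary $U_2\in\cM_{n-1}(\bC)$ triangularizing every $A'$ simultaneously; setting $U:=U_1\left(\begin{smallmatrix} 1 & 0 \\ 0 & U_2\end{smallmatrix}\right)$, which is unitary, makes $U^*AU$ upper triangular for every $A\in\cF$, completing the induction.

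The main obstacle is the common-eigenvector lemma, and in particular making it work for an arbitrary (possibly infinite) commuting family. The minimal-dimension invariant subspace $W$ exists precisely because dimensions are natural numbers bounded by $n$, and the force of the argument is that minimality upgrades the statement \emph{$A|_W$ has some eigenvector} to \emph{$A|_W$ is scalar, for every $A$ at once}, which is exactly what is needed to produce a single vector that is simultaneously an eigenvector of the entire family.
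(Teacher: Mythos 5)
Your proof is correct and complete: the minimal-dimension invariant subspace argument for the common eigenvector handles arbitrary (possibly infinite) commuting families, and the unitary deflation plus induction is carried out properly. The paper itself offers no proof of this statement---it is quoted verbatim from Horn and Johnson, Theorem 2.3.3---and your argument is essentially the standard one given in that reference, so there is nothing further to reconcile.
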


\end{rem}
 \begin{lem}\label{constdet} If $F-G$, $F+G$, $A-B$ and $A+B$ are invertible and $(F-G)^{-1}(A-B)$, $(F+G)^{-1}(A+B)$ and $X(0)$ commute, then $X_e$ and $X_o$ commute and $\left|\bm\cX \right|=|X_eY_e-X_oY_o|$.
 \end{lem}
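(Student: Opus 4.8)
The plan is to read the even and odd parts off the series in Remark~\ref{remdif}, establish that $X_e$ and $X_o$ commute, and then invoke the block-determinant identity of Lemma~\ref{lemdetmat}.1.

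First I would record from Remark~\ref{remdif} that
\[X_e(t)=\sum_{k=0}^\infty\frac{E^k t^{2k}}{(2k)!},\qquad X_o(t)=-M_+\sum_{k=0}^\infty\frac{E^k t^{2k+1}}{(2k+1)!},\]
with $E=M_-M_+$. Thus $X_e$ is a power series in the single matrix $E$, while $X_o$ is $-M_+$ times another power series in $E$. Since any two power series in $E$ commute with each other, the commutativity of $X_e$ and $X_o$ will reduce to showing that $M_+$ commutes with $E$.

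This is where the hypothesis enters. Because $M_-=(F-G)^{-1}(A-B)$ and $M_+=(F+G)^{-1}(A+B)$ are assumed to commute, I can compute $M_+E=M_+(M_-M_+)=(M_+M_-)M_+=(M_-M_+)M_+=EM_+$. Hence $M_+$ commutes with $E$, and therefore with $X_e$ and with the odd series $\sum_k E^kt^{2k+1}/(2k+1)!$; consequently $X_o$, being a product of factors each commuting with $X_e$, satisfies $X_eX_o=X_oX_e$, which is the first assertion.

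For the determinant I would identify the blocks of $\bm\cX$ from~\eqref{nXd} with the notation of Lemma~\ref{lemdetmat}, namely $M_1=X_e$, $M_2=Y_o$, $M_3=X_o$, $M_4=Y_e$. Since $M_1=X_e$ and $M_3=X_o$ commute by the previous paragraph, Lemma~\ref{lemdetmat}.1 applies directly and gives $|\bm\cX|=|M_1M_4-M_3M_2|=|X_eY_e-X_oY_o|$. The only genuine obstacle is the commutativity of $X_e$ and $X_o$, which rests entirely on the commutation of $M_+$ and $M_-$; the invertibility of $A\pm B$ and $F\pm G$ merely guarantees that $M_\pm$ and $E$ are well defined, while $X(0)=\Id$ commutes with everything and so imposes no extra condition here.
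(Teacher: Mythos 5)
There is a genuine gap, and it sits exactly at the point you dismiss: the role of $X(0)$. The series formulas you quote are stated in Remark~\ref{remdif} only \emph{``in the case we choose $X$ and $Y$ as given by Theorem~\ref{thmexpfm}''}, i.e.\ for the particular fundamental matrices normalized by $X(0)=Y(0)=\Id$. But in Section~\ref{sec5}, where Lemma~\ref{constdet} lives and is later applied, $X$ and $Y$ denote \emph{arbitrary} fundamental matrices of problems~\eqref{hlsystemnh} and~\eqref{hlsystemnh2}; by Corollary~\ref{corfmu} such an $X$ has the form
\[X(t)=\left[\cosh \Omega t -(F+G)^{-1}(A+B)\Omega^{-1}\sinh\Omega t\right]X(0)\]
with $X(0)$ an arbitrary invertible matrix, so that
\[X_e(t)=\cosh(\Omega t)\,X(0),\qquad X_o(t)=-(F+G)^{-1}(A+B)\Omega^{-1}\sinh(\Omega t)\,X(0).\]
Comparing $X_eX_o$ with $X_oX_e$ now requires moving the factor $X(0)$ past $(F+G)^{-1}(A+B)$, past $\Omega^{-1}$ and past the hyperbolic functions of $\Omega$; this is legitimate precisely because the lemma \emph{assumes} that $X(0)$ commutes with $(F-G)^{-1}(A-B)$ and $(F+G)^{-1}(A+B)$, hence with $E$, with $\Omega$ (a polynomial in $E$ by Proposition~\ref{prosq}), and with any power series in $\Omega$. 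That hypothesis is not decorative: were the lemma only about the matrix of Theorem~\ref{thmexpfm}, the condition on $X(0)$ would be vacuous, and results that use it downstream (Theorem~\ref{thmconstdet} and the variation-of-parameters development, where $X(0)$ and $Y(0)$ need not be the identity) could not invoke it. Your closing sentence --- that $X(0)=\Id$ ``imposes no extra condition'' --- is therefore an unjustified specialization, not an observation.

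Apart from this, your architecture coincides with the paper's: prove $X_eX_o=X_oX_e$, then apply Lemma~\ref{lemdetmat}.1 with $M_1=X_e$, $M_2=Y_o$, $M_3=X_o$, $M_4=Y_e$ (only the commutativity of $X_e$ and $X_o$ is needed, so $Y$ may remain arbitrary). The repair is short, and your computation $M_+E=EM_+$ is the right ingredient: write a general $X$ as above via Corollary~\ref{corfmu} (this is where invertibility of $A\pm B$ enters, to define $\Omega$), note that $X(0)E=EX(0)$ follows from the hypotheses just as $M_+E=EM_+$ does, and conclude that all factors occurring in $X_e$ and $X_o$ commute pairwise. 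Alternatively, you could keep your series and use Corollary~\ref{fmu} to write a general fundamental matrix as the Theorem~\ref{thmexpfm} series multiplied on the right by $X(0)$, which would even avoid the invertibility of $A\pm B$; either way, $X(0)$ must be carried through the argument rather than set equal to $\Id$.
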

 \begin{proof} We are under the hypotheses of Corollary \ref{corfmu}, so every fundamental matrix of problem~\eqref{hlsystem} is of the form
\[X(t)=\left[\cosh \Omega t -(F+G)^{-1}(A+B)\Omega^{-1}\sinh\Omega t\right]X(0).\]
Hence,
\begin{align*}
X_e(t) & =\cosh \left(\Omega t\right)X(0),\\
X_o(t) & =-(F+G)^{-1}(A+B)\Omega^{-1}\sinh(\Omega t)X(0),
\end{align*}
and
\begin{align*}
X_e(t)X_o(t) & =-\cosh (\Omega t)X(0)(F+G)^{-1}(A+B)\Omega^{-1}\sinh(\Omega t)X(0),\\
X_o(t)X_e(t) & =-(F+G)^{-1}(A+B)\Omega^{-1}\sinh(\Omega t)X(0)\cosh( \Omega t)X(0),
\end{align*}
By construction, $\Omega$ (and the functions evaluated on $\Omega$) commutes with everything $(F-G)^{-1}(A-B)(F+G)^{-1}(A+B)$ commutes with. In particular,
\begin{align*}
(F-G)^{-1}(A-B)(F+G)^{-1}(A+B)X(0) & =X(0)(F-G)^{-1}(A-B)(F+G)^{-1}(A+B),\\
(F-G)^{-1}(A-B)(F+G)^{-1}(A+B)^2 & =(F+G)^{-1}(A+B)(F-G)^{-1}(A-B)(F+G)^{-1}(A+B),
\end{align*}
Thus, it is clear that $X_e$ and $X_o$ commute.\par
By Lemma \ref{lemdetmat}.1, since $X_e$ and $X_o$ commute, $\left|\bm\cX \right|=|X_eY_e-X_oY_o|$.
 \end{proof}
 
\begin{thm}\label{thmconstdet}Assume $A+B$, $A-B$, $F+ G$ and $F-G$ are invertible, and that $X(0)$, $(F-G)^{-1}(A-B)$ and  $(F+G)^{-1}(A+B)$ commute. Then, $X_eY_e-X_oY_o=X(0)Y(0)$ and $|\bm\cX|=|X(0)Y(0)|\ne 0$.
\end{thm}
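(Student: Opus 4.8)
The plan is to reduce the whole statement to a single algebraic identity and then read off the determinant. By Lemma~\ref{constdet}, the commutativity hypotheses already yield $|\bm\cX| = |X_eY_e - X_oY_o|$, so everything comes down to proving $X_eY_e - X_oY_o = X(0)Y(0)$; the non-vanishing of the determinant will then be immediate, since $|X(0)Y(0)| = |X(0)|\,|Y(0)| \neq 0$ because $X(0)$ and $Y(0)$ are invertible by (H3). Thus the real work is the identity, and the strategy is to put both fundamental matrices into the closed hyperbolic form of Corollary~\ref{corfmu} and collapse the products using commutativity.

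First I would write, abbreviating $M_+ := (F+G)^{-1}(A+B)$ and $M_- := (F-G)^{-1}(A-B)$ as in Remark~\ref{remdif}, the form supplied by Theorem~\ref{thmexpfm} and Corollary~\ref{corfmu}: $X(t) = [\cosh\Omega t - M_+\Omega^{-1}\sinh\Omega t]\,X(0)$, where $\Omega$ is the square root of $E = M_-M_+$ from Proposition~\ref{prosq}. The crucial observation is that, since $M_+$ and $M_-$ commute by hypothesis, the matrix $\widetilde E = M_+M_-$ that governs the companion problem~\eqref{hlsystemnh2} equals $E$; hence the same $\Omega$ is its square root, and applying Corollary~\ref{corfmu} to~\eqref{hlsystemnh2} (whose coefficient matrices $F\mp G$, $A\mp B$ are exactly the invertible ones assumed) gives $Y(t) = [\cosh\Omega t - M_-\Omega^{-1}\sinh\Omega t]\,Y(0)$. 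Splitting into even and odd parts, and using that $\cosh$ is even and $\sinh$ odd, I obtain
\[ X_e = \cosh(\Omega t)\,X(0), \qquad X_o = -M_+\Omega^{-1}\sinh(\Omega t)\,X(0), \]
together with the analogous formulas for $Y$ with $M_-$ replacing $M_+$.

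Next I would record the commutation facts needed to rearrange the products. Because $X(0)$ commutes with $M_+$ and $M_-$ it commutes with $E$, and since $\Omega = P(E)$ is a polynomial in $E$ by Proposition~\ref{prosq}, $X(0)$ commutes with $\Omega$, hence with $\cosh(\Omega t)$, $\sinh(\Omega t)$ and $\Omega^{-1}$; the same argument shows $\Omega$ commutes with both $M_+$ and $M_-$. With this, sliding $X(0)$ to the right collapses both products: $X_eY_e = \cosh^2(\Omega t)\,X(0)Y(0)$, while $X_oY_o = M_+M_-\,\Omega^{-2}\sinh^2(\Omega t)\,X(0)Y(0)$, and since $M_+M_- = \widetilde E = E = \Omega^2$ the latter simplifies to $\sinh^2(\Omega t)\,X(0)Y(0)$. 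Subtracting and using the scalar identity $\cosh^2 - \sinh^2 = 1$ — valid for the primary matrix functions of the single matrix $\Omega t$ by Theorem~\ref{gft} — leaves precisely $X(0)Y(0)$, and then $|\bm\cX| = |X(0)Y(0)| \neq 0$ as noted.

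The only genuinely delicate point is the commutativity bookkeeping: one must verify that $E = \widetilde E$, so that a \emph{single} $\Omega$ appears in both $X$ and $Y$, and that $X(0)$ passes freely through every factor of $M_\pm$, $\Omega^{\pm1}$ and $\cosh/\sinh$. Everything after that is the routine reduction $\cosh^2 - \sinh^2 = \Id$. It is worth remarking that $Y(0)$ is never required to commute with anything: it simply rides along on the right of every expression, which is exactly why the final answer is the product $X(0)Y(0)$ rather than a symmetrized variant.
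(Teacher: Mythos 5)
Your proof is correct and follows essentially the same route as the paper's: both write $X$ and $Y$ in the closed hyperbolic form of Corollary~\ref{corfmu} with a common $\Omega$, slide $X(0)$ through using the commutativity hypotheses, reduce the coefficient of $\sinh^2(\Omega t)$ to the identity, and finish with $\cosh^2-\sinh^2=\Id$. Your handling of the cross term ($M_+M_-\Omega^{-2}=\Id$ directly) is marginally cleaner than the paper's manipulation $\Omega^{-1}(F-G)^{-1}(A-B)\Omega^{-1}=(A+B)^{-1}(F+G)$, and your explicit check that $E=\widetilde E$ justifies a step the paper leaves implicit, but these are refinements of the same argument rather than a different one.
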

\begin{proof}
 We know from Corollary \ref{corfmu} and Theorem \ref{thmexpfm} that
 \begin{align*}X(t) & =\(\cosh \Omega t -(F+G)^{-1}(A+B)\Omega^{-1}\sinh\Omega t\)X(0), \\
 Y(t) & =\(\cosh \Omega t -(F-G)^{-1}(A-B)\Omega^{-1}\sinh\Omega t\)Y(0).
 \end{align*}
Furthermore,
 \begin{align*}X_e(t) & =\cosh (\Omega t)X(0),\\
  X_o(t) & = -(F+G)^{-1}(A+B)\Omega^{-1}\sinh(\Omega t)X(0),\\
 Y_e(t) & =\cosh (\Omega t)Y(0),\\
  Y_o(t) & =-(F-G)^{-1}(A-B)\Omega^{-1}\sinh(\Omega t)Y(0).\end{align*}
Hence,
\begin{align*} & X_eY_e-X_oY_o \\ = & \cosh (\Omega t)X(0)\cosh (\Omega t)Y(0)-(F+G)^{-1}(A+B)\Omega^{-1}\sinh(\Omega t)X(0)(F-G)^{-1}(A-B)\Omega^{-1}\sinh(\Omega t)Y(0)
\\ = & \cosh^2 (\Omega t)X(0)Y(0)-(F+G)^{-1}(A+B)\Omega^{-1}\sinh(\Omega t)(F-G)^{-1}(A-B)\Omega^{-1}\sinh(\Omega t)X(0)Y(0)
\\ = & \left[\cosh^2 (\Omega t)-(F+G)^{-1}(A+B)\Omega^{-1}(F-G)^{-1}(A-B)\Omega^{-1}\sinh^2(\Omega t)\right]X(0)Y(0).
 \end{align*}
 Now, using that \[\Omega^2=(F-G)^{-1}(A-B)(F+G)^{-1}(A+B)=(F+G)^{-1}(A+B)(F-G)^{-1}(A-B),\] --because
 $(F+G)^{-1}(A+B)$ and  $(F-G)^{-1}(A-B)$ commute--, we have that
 \begin{align*}\Omega^{-1}(F-G)^{-1}(A-B)\Omega^{-1}& =\Omega^{-1}(F-G)^{-1}(A-B)\left[(F+G)^{-1}(A+B)(F-G)^{-1}(A-B)\right]^{-1}\Omega \\
 &=\Omega^{-1}(A+B)^{-1}(F+G)\Omega=\Omega^{-1}\Omega(A+B)^{-1}(F+G)=(A+B)^{-1}(F+G),  \end{align*}
 Therefore,
 \[(F+G)^{-1}(A+B)\Omega^{-1}(F-G)^{-1}(A-B)\Omega^{-1}=(F+G)^{-1}(A+B)(A+B)^{-1}(F+G)=\Id.\]
 Thus,\[X_eY_e-X_oY_o=\(\cosh^2\Omega t-\sinh^2\Omega t\)X(0)Y(0)=X(0)Y(0).\]
\end{proof}
\begin{rem} The commutativity of $(F-G)^{-1}(A-B)$ and  $(F+G)^{-1}(A+B)$ is necessary in the hypotheses of Theorem \ref{thmconstdet}. To see this, consider problem \eqref{hlsystemnh} with
\[F=\Id,\ G=0,\ A=
\left(
\begin{array}{cc}
 \frac{1}{2} & 1 \\
 1 & 0 \\
\end{array}
\right)
,\ B=
\left(
\begin{array}{cc}
 -\frac{1}{2} & 0 \\
 0 & 0 \\
\end{array}
\right)
.\]
Then,
\[(F+G)^{-1}=(F-G)^{-1}=\Id,\ A+B=\(\begin{array}{cc}
 0 & 1 \\
 1 & 0 \\
\end{array}
\right),\ A-B=\(\begin{array}{cc}
 1 & 1 \\
 1 & 0 \\
\end{array}
\right),\]
and
\[(A+B)(A-B)=\left(
\begin{array}{cc}
 1 & 0 \\
 1 & 1 \\
\end{array}
\right)\ne \left(
\begin{array}{cc}
 1 & 1 \\
 0 & 1 \\
\end{array}
\right)=(A-B)(A+B).\]
Also,
\[\Omega=\sqrt{(A-B)(A+B)}=\left(
\begin{array}{cc}
 1 & \frac{1}{2} \\
 0 & 1 \\
\end{array}
\right),\ \widetilde\Omega=\sqrt{(A+B)(A-B)}=\left(
\begin{array}{cc}
 1 & 0 \\
 \frac{1}{2} & 1 \\
\end{array}
\right).\]
So we have that
\[\bm\cX(t)=\left(
\begin{array}{cccc}
 \cosh (t) & \frac{1}{2} t \sinh (t) & \sinh (t) & \frac{1}{2} t \cosh (t)-\frac{\sinh (t)}{4} \\
 0 & \cosh (t) & 0 & \sinh (t) \\
 \sinh (t) & 0 & \cosh (t) & 0 \\
 \frac{1}{2} t \cosh (t)-\frac{\sinh (t)}{4} & \sinh (t) & \frac{1}{2} t \sinh (t) & \cosh (t) \\
\end{array}
\right).\]
Now,
\[|\bm\cX(t)|=\frac{1}{128} \left(-32 t^2+16 t \sinh (2 t)-\cosh (4 t)+129\right).\]
Observe that $|\bm\cX(0)|=1$ but $|\bm\cX(2)|= -4.81408\dots$ Also, in the neighbourhood  $[-1,1]$ the determinant $|\bm\cX(t)|$ is \emph{almost} constant equal to one which suggest and expression similar to a partial series approximation of the constant $1$.
\end{rem}
We summarize the method of variation of parameters in the following theorem.
\begin{thm}[Variation of Parameters Formula] \label{vpft}Assume $F+G$ and $F-G$ are invertible. Let $X$ and $Y$ be fundamental matrices of problems \eqref{hlsystemnh} and \eqref{hlsystemnh2} respectively and $\bm\cX$ defined as in \eqref{nXd}. Then the solutions of problem \eqref{hlsystemnh}, in a neighborhood of zero, are of the form
\begin{equation}\label{vpf}
u(t)=X(t)c+\(\begin{array}{c|c} X(t) & Y(t)\end{array}\)\int_0^t{\bm\cX(s)}^{-1}\begin{pmatrix}(F-G)^{-1}\c_o(s) \\ \hline  (F+G)^{-1}\c_e(s)\end{pmatrix}\dif s,
\end{equation}
where $c\in\bR^n$.
\end{thm}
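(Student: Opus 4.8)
The plan is to confirm that the ansatz leading to system~\eqref{systvpf} is internally consistent and that it captures every solution near the origin. Following the computation that precedes the statement, I look for a solution of~\eqref{hlsystemnh} of the form $x=Xa+Yb$, with $a$ even and $b$ odd. Because $X$ is a fundamental matrix of the homogeneous version of~\eqref{hlsystemnh} and $Y$ of the homogeneous version of~\eqref{hlsystemnh2}, substituting $x$ into $H=FD+G\phi^*D+A+B\phi^*$ makes every undifferentiated term vanish, leaving $(FX-G\ol X)a'+(FY+G\ol Y)b'=\gamma$; taking even and odd parts reproduces~\eqref{systvpf}, that is,
\begin{equation*}\bm\cX\begin{pmatrix} a'\\ b'\end{pmatrix}=\begin{pmatrix}(F-G)^{-1}\gamma_o\\ (F+G)^{-1}\gamma_e\end{pmatrix},\end{equation*}
with $\bm\cX$ as in~\eqref{nXd}.

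Next I would dispatch invertibility and integrate. Since $X_o(0)=Y_o(0)=0$ while $X_e(0)=X(0)$ and $Y_e(0)=Y(0)$ are invertible by (H3), the matrix $\bm\cX(0)=\diag(X(0),Y(0))$ is invertible, hence by continuity $\bm\cX(t)$ is invertible on a neighbourhood of $0$, as already noted in Remark~\ref{remdif}. On that neighbourhood I solve for the pair $(a',b')$, integrate from $0$, and fix the constants of integration by taking $a(0)=c\in\bR^n$ arbitrary and $b(0)=0$; this produces exactly formula~\eqref{vpf}.

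The delicate point — and precisely the defect that sank the naive attempt of the previous subsection — is to check that the functions $a,b$ so obtained genuinely carry the parities assumed in the ansatz. Here I would exploit the symmetry of the system: from the parities of $X_e,X_o,Y_e,Y_o$ one reads off $\bm\cX(-t)=J\,\bm\cX(t)\,J$ with $J=\diag(\Id,-\Id)$ and $J^2=\Id$, while the data vector $w$ satisfies $w(-t)=-J\,w(t)$ because $\gamma_o$ is odd and $\gamma_e$ is even. Consequently $\binom{a'}{b'}(-t)=\bm\cX(-t)^{-1}w(-t)=-J\binom{a'}{b'}(t)$, so $a'$ is odd and $b'$ is even; integrating, $a$ comes out even automatically, and $b$ comes out odd precisely because we imposed $b(0)=0$. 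This closes the self-consistency gap, so the earlier computation of $Hx$ is legitimate and $x=Xa+Yb$ is a bona fide solution of~\eqref{hlsystemnh}.

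Finally I would argue completeness. The solution constructed above satisfies $x(0)=X(0)c$, which ranges over all of $\bR^n$ as $c$ does, since $X(0)$ is invertible; and by Lemma~\ref{lemuos} (valid since $F+G$ and $F-G$ are invertible) the value $x(0)$ determines the solution uniquely. Hence~\eqref{vpf} exhausts every solution of~\eqref{hlsystemnh} in a neighbourhood of $0$. I expect the parity-consistency verification to be the main obstacle; the remaining invert-and-integrate steps are routine bookkeeping.
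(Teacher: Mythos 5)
Your proposal is correct and follows essentially the same route as the paper's proof: derive system~\eqref{systvpf} from the ansatz $x=Xa+Yb$ with $a$ even and $b$ odd, invert $\bm\cX$ near zero using $X_o(0)=Y_o(0)=0$ and the invertibility of $X(0)$, $Y(0)$, integrate to get~\eqref{vpf}, and obtain completeness from Lemma~\ref{lemuos} via uniqueness of the initial value problem. The one place you go beyond the paper is the parity-consistency check: the paper simply asserts that ``direct integration of \eqref{systvpf}'' yields a solution, whereas you verify via the symmetry $\bm\cX(-t)=J\bm\cX(t)J$, $w(-t)=-Jw(t)$ with $J=\diag(\Id,-\Id)$ that the integrated $a$ is genuinely even and $b$ (with $b(0)=0$) genuinely odd, which is exactly what legitimizes substituting them back into the ansatz. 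That verification is sound (and the identities you use are easily checked), so your argument closes a step the paper leaves implicit rather than taking a different path.
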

\begin{proof}
Expression \eqref{vpf} is obtained from direct integration of \eqref{systvpf}.  $X(0)$ and $Y(0)$ are invertible by Corollary \ref{fmu} and Theorem \ref{fme}. Furthermore, it is clear that $X_o(0)=Y_o(0)=0$ and, therefore, $X_e(0)=X(0)$, $Y_e(0)=Y(0)$. Hence $\bm\cX(t)$ is invertible at zero and, by continuity, in a neighborhood of zero, so we conclude that expression \eqref{vpf} is well posed. That is, for every $c\in\bR^n$, \eqref{vpf} provides a solution of equation \eqref{hlsystemnh}.\par
On the other hand, if $v$ is a solution of problem \eqref{hlsystemnh}, consider $u$ to be the solution obtained from \eqref{vpf} for $c=X(0)^{-1}v(0)$. We have that $u(0)=v(0)$, which implies, by Lemma \ref{lemuos}, that $u=v$.
\end{proof}

\section{The Green's function}\label{sec6}

\subsection{The Green's function of the initial value problem}
Consider now problem \eqref{hlsystemnh} coupled with initial conditions
\begin{align}\label{pgf1}Fu'(t)+Gu'(-t)+A u(t)+Bu(-t) & =\c,\ t\in \bR,\\ \label{pgf2} u(0) & =\d,
\end{align}
where $A,B,F,G\in\cM_n(\bR)$,  $\c\in\cC(\bR)$, and $\d\in\bR^n$. 
If $F+G$ and $F-G$, are invertible, we know there exists a fundamental matrix $X$ of equation \eqref{pgf1}, so it is enough to take $X$ as in Theorem \ref{thmexpfm} ($X(0)=\Id$) and substitute $c$ by $\d$ in \eqref{vpf}. The next Theorem establishes this solution in terms of the Green's function.
\begin{thm}[Green's function] Assume $F+G$ and $F-G$ are invertible, $X$ and $Y$ are fundamental matrices of problems  \eqref{hlsystemnh} and \eqref{hlsystemnh2} respectively and $\bm\cX$ is invertible in $\bR$. Then problem \eqref{pgf1}--\eqref{pgf2} has a unique solution $u:\bR\to\bR^n$ and it is given by \[u(t)=X(t)X(0)^{-1}\d+\int_{-t}^tG(t,s)\c(s)\dif s,\] where
\[G(t,s)=\begin{dcases} \frac{1}{2}\(\begin{array}{c|c} X(t) & Y(t)\end{array}\){\bm\cX(s)}^{-1}\begin{pmatrix} (F-G)^{-1} \\ \hline (F+G)^{-1}\end{pmatrix}, & 0\le s\le t,\\
\frac{1}{2}\(\begin{array}{c|c} X(t) & Y(t)\end{array}\){\bm\cX(-s)}^{-1}\begin{pmatrix} -(F-G)^{-1} \\ \hline (F+G)^{-1}\end{pmatrix}, & -t\le s<0.
\end{dcases}\]
\end{thm}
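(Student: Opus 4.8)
The plan is to obtain the result directly from the Variation of Parameters Formula (Theorem~\ref{vpft}), first fixing the free constant by the initial condition and then recasting the integral over $[0,t]$ as an integral over $[-t,t]$. Because $\bm\cX$ is assumed invertible on all of $\bR$, formula~\eqref{vpf} holds globally rather than only in a neighborhood of zero, so by Theorem~\ref{vpft} every solution of~\eqref{pgf1} has the form
\[u(t)=X(t)c+\left(\,X(t)\mid Y(t)\,\right)\int_0^t{\bm\cX(s)}^{-1}\begin{pmatrix}(F-G)^{-1}\c_o(s)\\ (F+G)^{-1}\c_e(s)\end{pmatrix}\dif s\]
for some $c\in\bR^n$. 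Setting $t=0$ annihilates the integral term, and since $X(0)$ is invertible by (H3) the condition $u(0)=\d$ forces $c=X(0)^{-1}\d$; this produces the homogeneous contribution $X(t)X(0)^{-1}\d$ in the claimed formula.

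The heart of the argument is to convert the even/odd decomposition of $\c$ into the two branches of $G$. I would substitute $\c_e(s)=\tfrac12\left(\c(s)+\c(-s)\right)$ and $\c_o(s)=\tfrac12\left(\c(s)-\c(-s)\right)$ and split the integrand into a part carrying $\c(s)$ and a part carrying $\c(-s)$. The $\c(s)$-part is already $\tfrac12\left(\,X(t)\mid Y(t)\,\right){\bm\cX(s)}^{-1}\begin{pmatrix}(F-G)^{-1}\\ (F+G)^{-1}\end{pmatrix}\c(s)$ integrated over $[0,t]$, which is exactly $\int_0^t G(t,s)\c(s)\dif s$ on the branch $0\le s\le t$. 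On the $\c(-s)$-part I would apply the change of variables $s\mapsto-s$: this reverses the limits so the integral runs over $[-t,0]$, replaces $\bm\cX(s)$ by $\bm\cX(-s)$, turns $\c(-s)$ into $\c(s)$, and leaves the sign $-(F-G)^{-1}$ in the upper block coming from the odd component, yielding precisely $\int_{-t}^0 G(t,s)\c(s)\dif s$ on the branch $-t\le s<0$. Adding the two branches produces $\int_{-t}^t G(t,s)\c(s)\dif s$ and hence the stated expression for $u$.

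Finally, uniqueness is immediate from Lemma~\ref{lemuos}: with $F+G$ and $F-G$ invertible, problem~\eqref{pgf1} together with $u(0)=\d$ admits at most one solution, and the function just constructed is one, hence the unique one. I expect the only delicate point to be the bookkeeping in the substitution $s\mapsto-s$, namely keeping the reversal of orientation, the passage from $\bm\cX(s)$ to $\bm\cX(-s)$, and the sign on $(F-G)^{-1}$ consistent so that the two pieces of $G$ assemble correctly; everything else is a routine substitution into Theorem~\ref{vpft}.
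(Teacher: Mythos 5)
Your proof is correct and takes essentially the same route as the paper: invoke Theorem~\ref{vpft} with the constant fixed to $c=X(0)^{-1}\d$ by the initial condition, write $\c_o(s)$ and $\c_e(s)$ in terms of $\c(s)$ and $\c(-s)$, keep the $\c(s)$ part as the integral over $[0,t]$, and apply the substitution $s\mapsto-s$ to the $\c(-s)$ part to produce the $\bm\cX(-s)^{-1}$ branch over $[-t,0)$, exactly as in the paper's proof. The only (harmless) additions are your explicit remarks that \eqref{vpf} holds globally because $\bm\cX$ is invertible on all of $\bR$ and that uniqueness follows from Lemma~\ref{lemuos}, points the paper leaves implicit.
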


\begin{proof} Following Theorem \ref{vpft}, we have that
\[u(t)=X(t)X(0)^{-1}\d+\(\begin{array}{c|c} X(t) & Y(t)\end{array}\)\int_{0}^t{\bm\cX(s)}^{-1}\begin{pmatrix}(F-G)^{-1}\c_o(s) \\ \hline (F+G)^{-1}\c_e(s)\end{pmatrix}\dif s.
\]
Hence, 
\begin{align*}
& u(t)-X(t)X(0)^{-1}\d \\ = &  \(\begin{array}{c|c} X(t) & Y(t)\end{array}\)\int_{0}^t{\bm\cX(s)}^{-1}\frac{1}{2}\left[\begin{pmatrix} (F-G)^{-1}\c(s) \\ \hline (F+G)^{-1}\c(s)\end{pmatrix}+\begin{pmatrix} -(F-G)^{-1}\ol\c(s) \\ \hline (F+G)^{-1}\ol\c(s)\end{pmatrix}\right]\dif s \\ = & 
\frac{1}{2}\(\begin{array}{c|c} X(t) & Y(t)\end{array}\)\left[\int_{0}^t{\bm\cX(s)}^{-1}\begin{pmatrix} (F-G)^{-1}\c(s) \\ \hline (F+G)^{-1}\c(s)\end{pmatrix}\dif s  +\int_{-t}^0{\bm\cX(-s)}^{-1}\begin{pmatrix} -(F-G)^{-1}\c(s) \\ \hline (F+G)^{-1}\c(s)\end{pmatrix}\dif s\right]
\end{align*}
Thus,
\[G(t,s)=\begin{dcases} \frac{1}{2}\(\begin{array}{c|c} X(t) & Y(t)\end{array}\){\bm\cX(s)}^{-1}\begin{pmatrix} (F-G)^{-1} \\ \hline (F+G)^{-1}\end{pmatrix}, & 0\le s\le t,\\
\frac{1}{2}\(\begin{array}{c|c} X(t) & Y(t)\end{array}\){\bm\cX(-s)}^{-1}\begin{pmatrix} -(F-G)^{-1} \\ \hline (F+G)^{-1}\end{pmatrix}, & -t\le s<0.
\end{dcases}\]
\end{proof}
\begin{exa}\label{exagf} We retake the simplified version of the problem studied in Example \ref{exasen}. That is, we consider the system of equations
\begin{equation}\label{ejfgi}\begin{aligned} u'(t) & = \b^2\, v(-t), \\ v'(t), & =\c^2\, u(-t),\end{aligned}
\end{equation}
with $\b,\c>0$. We have that
\begin{equation*}X(t)=
\begin{pmatrix}
 \cos \left( \beta\gamma t\right) & \frac{\b}{\c} \sin \left(\beta \gamma  t\right) \\
 \frac{\c}{\b} \sin \left(\beta \gamma t\right)  & \cos \left( \beta \gamma t\right)
\end{pmatrix}.\end{equation*}
Observe that the associated problem \eqref{hlsystemnh2}, in this case, is the same as \eqref{ejfgi} but substituting $\b^2$ and $\c^2$ by $-\b^2$ and $-\c^2$ respectively so, being careful with the signs,  we can conclude that
\begin{equation*}Y(t)=
\begin{pmatrix}
 \cos \left( \beta\gamma t\right) & -\frac{\b}{\c} \sin \left(\beta \gamma  t\right) \\
 -\frac{\c}{\b} \sin \left(\beta \gamma t\right)  & \cos \left( \beta \gamma t\right)
\end{pmatrix},\end{equation*}
 that is, $Y(t)=X(-t)$. Therefore
\[\bm\cX(t)= \begin{pmatrix}
 \cos \left( {\beta\gamma }t\right) &0 & 0& -\frac{\b}{\c} \sin \left({\beta \gamma } t\right) \\
 0& \cos \left( {\beta \gamma }t\right) & -\frac{\c}{\b} \sin \left({\beta \gamma } t\right)  & 0 \\0& \frac{\b}{\c} \sin \left({\beta \gamma } t\right) & \cos \left( {\beta\gamma }t\right) &0 \\
 \frac{\c}{\b} \sin \left({\beta \gamma } t\right)& 0 &  0& \cos \left( {\beta \gamma }t\right)
\end{pmatrix},\]
with inverse
\[\bm\cX(t)^{-1}= \begin{pmatrix}
 \cos \left( {\beta\gamma }t\right) &0 & 0& \frac{\b}{\c} \sin \left({\beta \gamma } t\right) \\
 0& \cos \left( {\beta \gamma }t\right) & \frac{\c}{\b} \sin \left({\beta \gamma } t\right)  & 0 \\0& -\frac{\b}{\c} \sin \left({\beta \gamma } t\right) & \cos \left( {\beta\gamma }t\right) &0 \\
 -\frac{\c}{\b} \sin \left({\beta \gamma } t\right)& 0 &  0& \cos \left( {\beta \gamma }t\right)
\end{pmatrix}.\]
Observe again that $\bm\cX(-t)=\bm\cX(t)^{-1}$. In this case,
\[G(t,s)=\begin{dcases} \left(
\begin{array}{cc}
 \cos ((s-t) \beta  \gamma ) & 0 \\
 0 & \cos ((s-t) \beta  \gamma ) \\
\end{array}
\right), & 0\le s\le t,\\
\left(\begin{array}{cc}
 0 & -\frac{\beta  \sin ((s+t) \beta  \gamma )}{\gamma } \\
 -\frac{\gamma  \sin ((s+t) \beta  \gamma )}{\beta } & 0 \\
\end{array}
\right), & -t\le s<0.
\end{dcases}\]
\end{exa}
\subsection{The Green's function of the boundary value problem}
Consider now problem \eqref{hlsystemnh} in the interval $I:=[-T,T]$ for some $T\in\bR^+$ coupled with two-point boundary conditions:
\begin{align}\label{pgf3}Fu'(t)+Gu'(-t)+A u(t)+Bu(-t) & =\c,\ t\in I,\\ \label{pgf4} Cu(-T)+Ku(T) & =\d,
\end{align}
where $A,B,C,F,G,K\in\cM_n(\bR)$,  $\c\in\cC(I)$, and $\d\in\bR^n$. We proceed in a similar way as in \cite[Chapter 1]{CabLibro}.
\begin{thm}\label{thmunbc}Assume $F+G$ and $F-G$ are invertible, $X$ and $Y$ are fundamental matrices of problems  \eqref{hlsystemnh} and \eqref{hlsystemnh2} respectively and $\bm\cX$, defined as in \eqref{nXd}, is invertible in $I$. Then problem \eqref{pgf3}--\eqref{pgf4} has a unique solution $u$ if and only if  $M_X=CX(-T)+KX(T)$ is invertible.
\end{thm}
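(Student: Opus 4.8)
The plan is to combine the variation of parameters formula of Theorem~\ref{vpft} with the boundary condition~\eqref{pgf4}, thereby reducing the existence and uniqueness question to the invertibility of a single $n\times n$ matrix. First I would record that, since $\bm\cX$ is invertible on all of $I$ and not merely in a neighbourhood of zero, formula~\eqref{vpf} is valid throughout $I$ and hence describes \emph{every} solution of equation~\eqref{pgf3} on $I$. Abbreviating the particular solution as
\[u_p(t):=\(\begin{array}{c|c} X(t) & Y(t)\end{array}\)\int_0^t{\bm\cX(s)}^{-1}\begin{pmatrix}(F-G)^{-1}\c_o(s) \\ \hline  (F+G)^{-1}\c_e(s)\end{pmatrix}\dif s,\]
the general solution of~\eqref{pgf3} is $u(t)=X(t)c+u_p(t)$ with $c\in\bR^n$ arbitrary.

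Next I would pin down the bijection between the free parameter $c$ and the solution $u$. Since $u_p(0)=0$, evaluating at $t=0$ gives $u(0)=X(0)c$, and $X(0)$ is invertible by property (H3); hence $c$ and $u(0)$ determine each other, and by Lemma~\ref{lemuos} the value $u(0)$ determines $u$ uniquely. Consequently $c\mapsto u$ is a bijection from $\bR^n$ onto the set of solutions of~\eqref{pgf3} on $I$, so the boundary value problem~\eqref{pgf3}--\eqref{pgf4} has a unique solution if and only if there is exactly one $c\in\bR^n$ for which $u(t)=X(t)c+u_p(t)$ satisfies~\eqref{pgf4}.

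I would then substitute this $u$ into~\eqref{pgf4}. By linearity,
\[Cu(-T)+Ku(T)=\left[CX(-T)+KX(T)\right]c+Cu_p(-T)+Ku_p(T)=M_Xc+Cu_p(-T)+Ku_p(T),\]
so the boundary condition is equivalent to the square linear system
\[M_Xc=\d-Cu_p(-T)-Ku_p(T),\]
whose right-hand side is a fixed vector determined by the data $\c$, $\d$, $C$ and $K$. The final step is the elementary dichotomy for a square matrix: the system $M_Xc=b$ has exactly one solution $c$ precisely when $M_X$ is invertible. Indeed, if $M_X$ is invertible then $c=M_X^{-1}b$ is the unique solution, whereas if $M_X$ is singular then $\ker M_X\ne\{0\}$ and the system has either no solution or infinitely many, never exactly one. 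Combined with the bijection $c\mapsto u$, this yields the asserted equivalence.

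I do not expect a genuine obstacle beyond careful bookkeeping in the substitution. The one point that must not be glossed over is that the hypothesis that $\bm\cX$ be invertible on the whole interval $I$ is exactly what licenses the use of~\eqref{vpf} across all of $I$, and hence guarantees that~\eqref{vpf} captures all solutions of~\eqref{pgf3} rather than only those near the origin; without it the reduction to a single linear system in $c$ would fail.
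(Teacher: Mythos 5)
Your proposal is correct and follows essentially the same route as the paper: both apply the variation of parameters formula of Theorem~\ref{vpft} (valid on all of $I$ because $\bm\cX$ is invertible there) to write every solution as $X(t)c$ plus a fixed particular solution, impose the boundary condition \eqref{pgf4} to obtain the square linear system $M_Xc=\d-Cu_p(-T)-Ku_p(T)$, and conclude that a unique solution exists precisely when $M_X$ is invertible. Your write-up is in fact slightly more explicit than the paper's on two points it leaves implicit --- the bijection $c\mapsto u$ via $u(0)=X(0)c$ and Lemma~\ref{lemuos}, and the no-solution/infinitely-many dichotomy when $M_X$ is singular --- but the underlying argument is identical.
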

\begin{proof}
By Theorem \ref{fmu}, there exists a fundamental matrix $X$ of problem \eqref{pgf3}. By Theorem \ref{vpft}, there exists a unique solution $u$ defined on $I$ given by \eqref{vpf} for some $c\in\bR^n$. Evaluating in $-T$ and $T$,
\begin{align*}
u(-T) & =X(-T)c+\(\begin{array}{c|c} X(-T) & Y(-T)\end{array}\)\int_0^{-T}{\bm\cX(s)}^{-1}\begin{pmatrix}(F-G)^{-1}\c_o(s) \\ \hline  (F+G)^{-1}\c_e(s)\end{pmatrix}\dif s, \\
u(T) & =X(T)c+\(\begin{array}{c|c} X(T) & Y(T)\end{array}\)\int_0^{T}{\bm\cX(s)}^{-1}\begin{pmatrix}(F-G)^{-1}\c_o(s) \\ \hline  (F+G)^{-1}\c_e(s)\end{pmatrix}\dif s, \\
\end{align*}

 If we impose the boundary conditions on $u$, we have that
 \begin{align*}
\d = & Cu(-T)+Ku(T) \\ = & C\left[X(-T)c+\(\begin{array}{c|c} X(-T) & Y(-T)\end{array}\)\int_0^{-T}{\bm\cX(s)}^{-1}\begin{pmatrix}(F-G)^{-1}\c_o(s) \\ \hline  (F+G)^{-1}\c_e(s)\end{pmatrix}\dif s\right]\\ & +K\left[X(T)c+\(\begin{array}{c|c} X(T) & Y(T)\end{array}\)\int_0^{T}{\bm\cX(s)}^{-1}\begin{pmatrix}(F-G)^{-1}\c_o(s) \\ \hline  (F+G)^{-1}\c_e(s)\end{pmatrix}\dif s\right] \\ = & M_Xc+ C\(\begin{array}{c|c} X(-T) & Y(-T)\end{array}\)\int_0^{-T}{\bm\cX(s)}^{-1}\begin{pmatrix}(F-G)^{-1}\c_o(s) \\ \hline  (F+G)^{-1}\c_e(s)\end{pmatrix}\dif s\\ & +K\(\begin{array}{c|c} X(T) & Y(T)\end{array}\)\int_0^{T}{\bm\cX(s)}^{-1}\begin{pmatrix}(F-G)^{-1}\c_o(s) \\ \hline  (F+G)^{-1}\c_e(s)\end{pmatrix}\dif s
\end{align*}
This linear equation has a unique solution for $c$ if and only if $M_X$ is invertible.
\end{proof}
\begin{thm}[Green's function] \label{gftpp}Assume $F-G$ and $F+G$ are invertible, $X$ and $Y$ are fundamental matrices of problems \eqref{hlsystemnh} and \eqref{hlsystemnh2} respectively, $\bm\cX$ is invertible in $I$ and $M_X$ is invertible. Then problem \eqref{pgf3}--\eqref{pgf4} has a unique solution $u$ and it is given by \begin{equation} \label{sgfbvp} u(t)=X(t)M_X^{-1}\d+\int_{-T}^TG(t,s)\c(s)\dif s,\end{equation} where
$G(t,s)=$
\begin{equation}\label{gfbvp}
\frac{1}{2}\begin{cases}
\begin{aligned} & \left[-X(t)M_X^{-1} K\(\begin{array}{c|c} X(T) & Y(T)\end{array}\) +\(\begin{array}{c|c} X(t) & Y(t)\end{array}\)\right]  {\bm\cX(s)}^{-1}\begin{pmatrix} (F-G)^{-1}\c(s) \\ \hline (F+G)^{-1}\c(s)\end{pmatrix}\\ & +  X(t)M_X^{-1}C\(\begin{array}{c|c} X(-T) & Y(-T)\end{array}\) {\bm\cX(-s)}^{-1}\begin{pmatrix} -(F-G)^{-1}\c(s) \\ \hline (F+G)^{-1}\c(s)\end{pmatrix},\end{aligned} & 0<s<t,\\
\begin{aligned} & X(t)M_X^{-1}C\(\begin{array}{c|c} X(-T) & Y(-T)\end{array}\) {\bm\cX(s)}^{-1}\begin{pmatrix} (F-G)^{-1}\c(s) \\ \hline (F+G)^{-1}\c(s)\end{pmatrix}\\ &   \left[-X(t)M_X^{-1} K\(\begin{array}{c|c} X(T) & Y(T)\end{array}\) +\(\begin{array}{c|c} X(t) & Y(t)\end{array}\)\right] {\bm\cX(-s)}^{-1}\begin{pmatrix} -(F-G)^{-1}\c(s) \\ \hline (F+G)^{-1}\c(s)\end{pmatrix},\end{aligned} & 0<-s<t, \\
\begin{aligned} & -X(t)M_X^{-1} K\(\begin{array}{c|c} X(T) & Y(T)\end{array}\)   {\bm\cX(s)}^{-1}\begin{pmatrix} (F-G)^{-1}\c(s) \\ \hline (F+G)^{-1}\c(s)\end{pmatrix}\\ & +  \left[X(t)M_X^{-1}C\(\begin{array}{c|c} X(-T) & Y(-T)\end{array}\)-\(\begin{array}{c|c} X(t) & Y(t)\end{array}\)\right] {\bm\cX(-s)}^{-1}\begin{pmatrix} -(F-G)^{-1}\c(s) \\ \hline (F+G)^{-1}\c(s)\end{pmatrix},\end{aligned} & 0<s<-t,\\
\begin{aligned} & \left[X(t)M_X^{-1} C\(\begin{array}{c|c} X(-T) & Y(-T)\end{array}\) -\(\begin{array}{c|c} X(t) & Y(t)\end{array}\)\right]  {\bm\cX(s)}^{-1}\begin{pmatrix} (F-G)^{-1}\c(s) \\ \hline (F+G)^{-1}\c(s)\end{pmatrix}\\ & -X(t)M_X^{-1}K\(\begin{array}{c|c} X(T) & Y(T)\end{array}\) {\bm\cX(-s)}^{-1}\begin{pmatrix} -(F-G)^{-1}\c(s) \\ \hline (F+G)^{-1}\c(s)\end{pmatrix},\end{aligned} & 0<-s<-t,\\
\begin{aligned} & -X(t)M_X^{-1} K\(\begin{array}{c|c} X(T) & Y(T)\end{array}\)  {\bm\cX(s)}^{-1}\begin{pmatrix} (F-G)^{-1}\c(s) \\ \hline (F+G)^{-1}\c(s)\end{pmatrix}\\ & +X(t)M_X^{-1}C\(\begin{array}{c|c} X(-T) & Y(-T)\end{array}\) {\bm\cX(-s)}^{-1}\begin{pmatrix} -(F-G)^{-1}\c(s) \\ \hline (F+G)^{-1}\c(s)\end{pmatrix},\end{aligned} & |t|<s,\\
\begin{aligned} & X(t)M_X^{-1} C\(\begin{array}{c|c} X(-T) & Y(-T)\end{array}\)  {\bm\cX(s)}^{-1}\begin{pmatrix} (F-G)^{-1}\c(s) \\ \hline (F+G)^{-1}\c(s)\end{pmatrix}\\ & -X(t)M_X^{-1}K\(\begin{array}{c|c} X(T) & Y(T)\end{array}\) {\bm\cX(-s)}^{-1}\begin{pmatrix} -(F-G)^{-1}\c(s) \\ \hline (F+G)^{-1}\c(s)\end{pmatrix},\end{aligned} & |t|<-s.\\
\end{cases}
\end{equation}
\end{thm}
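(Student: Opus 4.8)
The plan is to proceed exactly as in the classical two-point problem treated in \cite[Chapter~1]{CabLibro}: start from the variation of parameters representation and pin down the free constant using the boundary condition \eqref{pgf4}. Since $F+G$ and $F-G$ are invertible, Theorem~\ref{vpft} tells us that every solution of \eqref{pgf3} on $I$ has the form
\[u(t)=X(t)c+\(\begin{array}{c|c} X(t) & Y(t)\end{array}\)\int_0^t{\bm\cX(s)}^{-1}\begin{pmatrix}(F-G)^{-1}\c_o(s) \\ \hline (F+G)^{-1}\c_e(s)\end{pmatrix}\dif s\]
for some $c\in\bR^n$. The first move is to evaluate this at $-T$ and $T$, insert it into \eqref{pgf4}, and collect the $c$--terms; exactly as in the proof of Theorem~\ref{thmunbc} this yields $\d=M_Xc+(\text{two boundary integrals})$, where the boundary integrals carry the factors $C\(\begin{array}{c|c} X(-T) & Y(-T)\end{array}\)$ and $K\(\begin{array}{c|c} X(T) & Y(T)\end{array}\)$ and run to $-T$ and $T$ respectively. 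Because $M_X$ is invertible this determines $c=M_X^{-1}\d-M_X^{-1}[\,\cdots\,]$, and substituting back produces the leading term $X(t)M_X^{-1}\d$ of \eqref{sgfbvp} together with three integral terms: the genuine interior integral over $[0,t]$ and two boundary corrections over $[0,-T]$ and $[0,T]$ bearing the coefficients $-X(t)M_X^{-1}C\(\begin{array}{c|c} X(-T) & Y(-T)\end{array}\)$ and $-X(t)M_X^{-1}K\(\begin{array}{c|c} X(T) & Y(T)\end{array}\)$.

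The second step is to rewrite each of these three integrals, whose integrands only see the even and odd parts $\c_e,\c_o$, as honest integrals of $\c$, repeating verbatim the device used in the proof of the initial value Green's function. Using $\c_o(s)=\tfrac12(\c(s)-\ol\c(s))$ and $\c_e(s)=\tfrac12(\c(s)+\ol\c(s))$, each integral splits into a $\c$--piece and an $\ol\c$--piece; on the $\ol\c$--piece I would substitute $s\mapsto-s$ and use $\ol\c(s)=\c(-s)$, which turns ${\bm\cX(s)}^{-1}$ into ${\bm\cX(-s)}^{-1}$, flips the sign of the $(F-G)^{-1}$ block, and reverses the relevant limit of integration. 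After this every contribution becomes an integral of $\c(s)$ over a subinterval of $I$ delimited by the points $0,\pm t,\pm T$, with kernel ${\bm\cX(\pm s)}^{-1}$ acting on the column whose upper block is $\pm(F-G)^{-1}$ and whose lower block is $(F+G)^{-1}$.

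Finally I would assemble the kernel into the piecewise form \eqref{gfbvp}. The two boundary corrections integrate all the way to $\pm T$, so they feed every $s$ with $0<|s|<T$, whereas the interior integral only feeds those $s$ lying between $0$ and $t$, with an orientation governed by $\sign t$. Sorting the resulting indicator functions according to the mutual position of $s$ with respect to $0$, $t$ and $-t$ produces precisely six regions; the combined brackets $[-X(t)M_X^{-1}K(\,\cdot\,)+(X(t)\,|\,Y(t))]$ and $[X(t)M_X^{-1}C(\,\cdot\,)-(X(t)\,|\,Y(t))]$ appearing in \eqref{gfbvp} arise exactly on the regions where the interior term overlaps a boundary correction.

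The main obstacle is this last bookkeeping. Everything before it is a mechanical reprise of the two computations already carried out, for the variation of parameters formula and for the initial value Green's function, so no new idea is required; but getting \eqref{gfbvp} exactly right means tracking, across the six sign patterns of $(t,s)$, both the orientation of $\int_0^t$ and the sign carried by each $(F-G)^{-1}$ block after the $s\mapsto-s$ substitutions, and then verifying that every region receives the correct superposition of the interior and the two boundary contributions. Uniqueness of $u$ requires no separate argument, since it is already supplied by Theorem~\ref{thmunbc}.
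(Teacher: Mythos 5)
Your proposal follows essentially the same route as the paper's own proof: variation of parameters via Theorem~\ref{vpft}, determination of the constant $c$ from the boundary condition exactly as in Theorem~\ref{thmunbc}, the even/odd splitting of $\c$ with the $s\mapsto-s$ substitution (the same device as in the initial value case), and the final sorting of indicator functions $\chi_{[-T,0]}$, $\chi_{[0,T]}$, $\chi_0^t$ into the six regions of \eqref{gfbvp}. The outline is correct and complete in its identification of every step, including deferring uniqueness to Theorem~\ref{thmunbc}.
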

\begin{proof} Following Theorem \ref{vpft},
\[u(t)=X(t)c+\(\begin{array}{c|c} X(t) & Y(t)\end{array}\)\int_{0}^t{\bm\cX(s)}^{-1}\begin{pmatrix}(F-G)^{-1}\c_o(s) \\ \hline  (F+G)^{-1}\c_e(s)\end{pmatrix}\dif s,
\]
where $c$ is, by Theorem \ref{thmunbc},
\begin{align*}c= & M_X^{-1}\left[\d-C\(\begin{array}{c|c} X(-T) & Y(-T)\end{array}\)\int_0^{-T}{\bm\cX(s)}^{-1}\begin{pmatrix}(F-G)^{-1}\c_o(s) \\ \hline  (F+G)^{-1}\c_e(s)\end{pmatrix}\dif s\right.\\ &  \phantom{ M_X^{-1}[\ \d}-\left. K\(\begin{array}{c|c} X(T) & Y(T)\end{array}\)\int_0^{T}{\bm\cX(s)}^{-1}\begin{pmatrix}(F-G)^{-1}\c_o(s) \\ \hline  (F+G)^{-1}\c_e(s)\end{pmatrix}\dif s\right].\end{align*}
Hence, if $\chi_{[a,b]}$ is the indicator function of the interval $[a,b]$, and $\chi_a^b$ is the \emph{oriented} indicator function of limits $a$ and $b$, that is, if $b\ge a$, $\chi_a^b=\chi_{[a,b]}$, if $b<a$, $\chi_a^b=-\chi_{[b,a]}$, then
\begin{align*}
& u(t)-X(t)M_X^{-1}\d \\ = & -X(t)M_X^{-1}C\(\begin{array}{c|c} X(-T) & Y(-T)\end{array}\)\int_0^{-T}{\bm\cX(s)}^{-1}\begin{pmatrix}(F-G)^{-1}\c_o(s) \\ \hline  (F+G)^{-1}\c_e(s)\end{pmatrix}\dif s\\ & -X(t)M_X^{-1} K\(\begin{array}{c|c} X(T) & Y(T)\end{array}\)\int_0^{T}{\bm\cX(s)}^{-1}\begin{pmatrix}(F-G)^{-1}\c_o(s) \\ \hline  (F+G)^{-1}\c_e(s)\end{pmatrix}\dif s
\\ & +\(\begin{array}{c|c} X(t) & Y(t)\end{array}\)\int_{0}^t{\bm\cX(s)}^{-1}\begin{pmatrix}(F-G)^{-1}\c_o(s) \\ \hline  (F+G)^{-1}\c_e(s)\end{pmatrix}\dif s
\\ 
= & \int_{-T}^{T}\left(X(t)M_X^{-1}\left[C\(\begin{array}{c|c} X(-T) & Y(-T)\end{array}\)\chi_{[-T,0]}(s)- K\(\begin{array}{c|c} X(T) & Y(T)\end{array}\)\chi_{[0,T]}(s)\right] +\(\begin{array}{c|c} X(t) & Y(t)\end{array}\)\chi_{[0,t]}(s)\right) \\ & {\bm\cX(s)}^{-1}\begin{pmatrix}(F-G)^{-1}\c_o(s) \\ \hline  (F+G)^{-1}\c_e(s)\end{pmatrix}\dif s  .
\end{align*}
On the other hand,
\[\begin{pmatrix} (F-G)^{-1}\c_o \\ \hline  (F+G)^{-1}\c_e\end{pmatrix}=\frac{1}{2}\left[\begin{pmatrix} (F-G)^{-1}\c \\ \hline (F+G)^{-1}\c\end{pmatrix}+\begin{pmatrix} -(F-G)^{-1}\ol\c \\ \hline (F+G)^{-1}\ol\c\end{pmatrix}\right].\]
Therefore,
\begin{align*}
& u(t)-X(t)M_X^{-1}\d  \\ = & \frac{1}{2}\int_{-T}^{T}\left(X(t)M_X^{-1}\left[C\(\begin{array}{c|c} X(-T) & Y(-T)\end{array}\)\chi_{[-T,0]}(s)- K\(\begin{array}{c|c} X(T) & Y(T)\end{array}\)\chi_{[0,T]}(s)\right] +\(\begin{array}{c|c} X(t) & Y(t)\end{array}\)\chi_0^t(s)\right)\\ & {\bm\cX(s)}^{-1}\begin{pmatrix} (F-G)^{-1}\c(s) \\ \hline (F+G)^{-1}\c(s)\end{pmatrix}\dif s \\ & +  \frac{1}{2}\int_{-T}^{T}\left(X(t)M_X^{-1}\left[C\(\begin{array}{c|c} X(-T) & Y(-T)\end{array}\)\chi_{[-T,0]}(s)- K\(\begin{array}{c|c} X(T) & Y(T)\end{array}\)\chi_{[0,T]}(s)\right] +\(\begin{array}{c|c} X(t) & Y(t)\end{array}\)\chi_0^t(s)\right)\\ & {\bm\cX(s)}^{-1}\begin{pmatrix} -(F-G)^{-1}\ol\c(s) \\ \hline (F+G)^{-1}\ol\c(s)\end{pmatrix}\dif s \\
= & \frac{1}{2}\int_{-T}^{T}\left(X(t)M_X^{-1}\left[C\(\begin{array}{c|c} X(-T) & Y(-T)\end{array}\)\chi_{[-T,0]}(s)- K\(\begin{array}{c|c} X(T) & Y(T)\end{array}\)\chi_{[0,T]}(s)\right] +\(\begin{array}{c|c} X(t) & Y(t)\end{array}\)\chi_0^t(s)\right) \\ & {\bm\cX(s)}^{-1}\begin{pmatrix} (F-G)^{-1}\c(s) \\ \hline (F+G)^{-1}\c(s)\end{pmatrix}\dif s \\ & +  \frac{1}{2}\int_{-T}^{T}\left(X(t)M_X^{-1}\left[C\(\begin{array}{c|c} X(-T) & Y(-T)\end{array}\)\chi_{[-T,0]}(-s)- K\(\begin{array}{c|c} X(T) & Y(T)\end{array}\)\chi_{[0,T]}(-s)\right] +\(\begin{array}{c|c} X(t) & Y(t)\end{array}\)\chi_0^t(-s)\right)\\ & {\bm\cX(-s)}^{-1}\begin{pmatrix} -(F-G)^{-1}\c(s) \\ \hline (F+G)^{-1}\c(s)\end{pmatrix}\dif s
\\ =
& \frac{1}{2}\int_{-T}^{T}\left(X(t)M_X^{-1}\left[C\(\begin{array}{c|c} X(-T) & Y(-T)\end{array}\)\chi_{[-T,0]}(s)- K\(\begin{array}{c|c} X(T) & Y(T)\end{array}\)\chi_{[0,T]}(s)\right] +\(\begin{array}{c|c} X(t) & Y(t)\end{array}\)\chi_0^t(s)\right) \\ & {\bm\cX(s)}^{-1}\begin{pmatrix} (F-G)^{-1}\c(s) \\ \hline (F+G)^{-1}\c(s)\end{pmatrix}\\ & +  \left(X(t)M_X^{-1}\left[C\(\begin{array}{c|c} X(-T) & Y(-T)\end{array}\)\chi_{[0,T]}(s)- K\(\begin{array}{c|c} X(T) & Y(T)\end{array}\)\chi_{[-T,0]}(s)\right] +\(\begin{array}{c|c} X(t) & Y(t)\end{array}\)\chi_{-t}^0(s)\right)\\ & {\bm\cX(-s)}^{-1}\begin{pmatrix} -(F-G)^{-1}\c(s) \\ \hline (F+G)^{-1}\c(s)\end{pmatrix}\dif s.
\end{align*}
That is,  $G$ is as in \eqref{gfbvp}.
\end{proof}
\begin{rem}Observe that, in expression \eqref{gfbvp} we are defining $G$ in an open subset of $I^2$. The set in which $G(t,\cdot)$ has not been defined is of zero Lebesgue measure in $I$, so it is irrelevant in terms of the obtaining of the solution in equation  \eqref{sgfbvp}.
\end{rem}
\begin{exa} We retake the problem in Example \ref{exagf} the simplified version of the problem studied in Example \ref{exasen} adding the periodic boundary conditions $(u,v)(-T)=(u,v)(T)$ for some fixed $T\in\bR+$. We have that
\[M_X=X(-T)-X(T)=\left(
\begin{array}{cc}
 0 & -\frac{2 \beta  \sin (T \beta  \gamma )}{\gamma } \\
 -\frac{2 \gamma  \sin (T \beta  \gamma )}{\beta } & 0 \\
\end{array}
\right),\]
has to be invertible, that is, we have to take $T\in\bR^+\backslash\{(k\pi-\pi/2)/(\b\c)\ :\ k\in\bN\}$ in order to compute the Green's function. Also,
\[M_X^{-1}K\(\begin{array}{c|c} X(T) & Y(T)\end{array}\)=\left(
\begin{array}{cccc}
 \frac{1}{2} & \frac{\beta  \cot (T \beta  \gamma )}{2 \gamma } & -\frac{1}{2} & \frac{\beta  \cot (T \beta  \gamma )}{2 \gamma } \\
 \frac{\gamma  \cot (T \beta  \gamma )}{2 \beta } & \frac{1}{2} & \frac{\gamma  \cot (T \beta  \gamma )}{2 \beta } & -\frac{1}{2} \\
\end{array}
\right).\]

Therefore, 
\[2\sin (\beta  \gamma  T)\, G(t,s)=\begin{dcases}
 \begin{pmatrix}
  \sin ((s-t+T) \beta  \gamma ) & -\frac{\beta  \cos ((s+t-T) \beta  \gamma )}{\gamma } \\
  -\frac{\gamma  \cos ((s+t-T) \beta  \gamma )}{\beta } & \sin ((s-t+T) \beta  \gamma ) \\
\end{pmatrix}
 , & |s|< t,\\
\begin{pmatrix}
 \sin ((s-t-T) \beta  \gamma ) & -\frac{\beta  \cos ((s+t+T) \beta  \gamma )}{\gamma } \\
 -\frac{\gamma  \cos ((s+t+T) \beta  \gamma )}{\beta } & \sin ((s-t-T) \beta  \gamma ) \\
\end{pmatrix}
, & |s|<-t, \\
\begin{pmatrix}
 \sin ((s-t-T) \beta  \gamma ) & -\frac{\beta  \cos ((s+t-T) \beta  \gamma )}{\gamma } \\
 -\frac{\gamma  \cos ((s+t-T) \beta  \gamma )}{\beta } & \sin ((s-t-T) \beta  \gamma ) \end{pmatrix}, &  |t|<s,
\\
\begin{pmatrix}
 \sin ((s-t+T) \beta  \gamma ) & -\frac{\beta  \cos ((s+t+T) \beta  \gamma )}{\gamma } \\
 -\frac{\gamma  \cos ((s+t+T) \beta  \gamma )}{\beta } & \sin ((s-t+T) \beta  \gamma ) \\
\end{pmatrix}, & |t|< -s.
\end{dcases}\]
\end{exa}
\begin{exa}\label{le} The theory presented in this paper generalizes the theory of Green's functions of linear differential equations with reflection. Now we retake the classical problem studied in \cite{Cab4}.

\begin{equation}\label{cprob} x'(t)+mx(-t)=h(t),\ t\in I=[-T,T],\ x(-T)=x(T),\end{equation}
where $m>0$.
We have that, using formula \eqref{fme}, a fundamental matrix of problem \eqref{cprob} is
\[X(t)=\cos m t -\sin m t.\]
Correspondingly,
\[Y(t)=\cos m t +\sin m t.\]
Hence,
\[\bm\cX(t)=\begin{pmatrix} \cos mt & \sin mt \\ -\sin mt & \cos mt\end{pmatrix}.\]
Using the formula for the Green's function provided by Theorem  \eqref{gftpp},
\begin{equation*}\label{gbarra}
2\sin(mT) G(t,s)=\begin{cases} \cos m(T-s-t)+\sin m(T+s-t), & |s|<t,\\\cos m(T-s-t)-\sin m(T-s+t), & |t|<s,\\\cos m(T+s+t)+\sin m(T+s-t), &  |t|<-s,\\\cos m(T+s+t)-\sin m(T-s+t), & |s|<-t.\end{cases}
\end{equation*}
This expression coincides with the one obtained in \cite{Cab4} as expected.
\end{exa}

\section{Final remarks}

We would like to make some last comments on the preceding discussion. The reader may be missing the corresponding generalization of the previous results to the case of equations with nonconstant coefficients. Although in the case of ODEs this step is straightforward and the theory applies without any change of relevance, in the case of equations with reflection this generalization if far from trivial. In fact, it is not possible in general. We refer the reader to \cite{CabToj} for more information on this subject.
\par
On a different matter, the reader may have realized that most of what is done here is valid for linear differential equations with coefficients in Banach algebras with unity. Actually, matrices form a really poorly behaved Banach algebra. Most of the trouble we went trough in this paper originated in the facts that, on one hand, matrices do not commute in general --Theorems \ref{thmexpfm},  \ref{mpaf} and  \ref{thmconstdet}, Lemmas \ref{lempha} and \ref{constdet}, Remark \ref{remdif} highlight this issue-- and, on the other, that they conform an algebra with divisors of zero --which in this case correspond to the singular matrices. Therefore, the theory presented here may be extended to other Banach algebras of linear endomorphisms of given vector spaces.\par

\end{document}